\newtheorem{theorem}{Theorem}[section]
\newtheorem{proposition}[theorem]{Proposition}
\newtheorem{definition}[theorem]{Definition}
\newtheorem{corollary}[theorem]{Corollary}
\newtheorem{lemma}[theorem]{Lemma}
\numberwithin{equation}{section}
\theoremstyle{remark}
\newtheorem{remark}[theorem]{Remark}
\newtheorem{example}[theorem]{\bf Example}
\newcommand{\R}{\mathbb{R}}
\newcommand{\C}{\mathbb{C}}
\newcommand{\D}{\mathbb{D}}
\newcommand{\E}{\mathbb{E}}
\newcommand{\FC}{\mathcal{C}}
\newcommand{\dd}{\mathrm{d}}
\newcommand{\ee}{\mathbf{e}}
\begin{document}

\title[Willmore surfaces in spheres via loop groups]{\bf{Willmore surfaces in spheres: the DPW approach via the  conformal Gauss map}}
\author{Josef F. Dorfmeister, Peng Wang }

\date{}
\maketitle

\begin{abstract}The paper builds a DPW approach of Willmore surfaces via conformal Gauss maps. As applications, we provide descriptions of minimal surfaces in $\mathbb R^{n+2}$, isotropic surfaces in $S^4$ and homogeneous Willmore tori via the loop group  method. A new example of a Willmore two-sphere in $S^6$ without dual surfaces is also presented.
\end{abstract}

\vspace{0.5mm}  {\bf \ \ ~~Keywords:}  Willmore surface;  conformal Gauss map; normalized potential;  non-compact symmetric space; Iwasawa decomposition.     \vspace{2mm}

{\bf\   ~~ MSC(2010): \hspace{2mm} 53A30, 53C30, 53C35}


\section{Introduction}

A Willmore surface in $S^{n+2}$ is a critical surface of the Willmore functional
$\int_M(|\vec{H}|^2-K+1)dM$
with $\vec{H}$  and $K$ being the mean curvature vector and the Gauss curvature respectively. It is well-known that the Willmore functional is invariant under conformal transformations of $S^{n+2}$. Moreover, by Blaschke \cite{Blaschke}, Bryant \cite{Bryant1984}, Ejiri \cite{Ejiri1988}, and Rigoli \cite{Rigoli1987}, a conformal immersion is Willmore if and only if its conformal Gauss map is harmonic.

Later, H\'{e}lein's important observations \cite{Helein}, generalized by Xia-Shen \cite{Xia-Shen} and also developed  in a different direction by Xiang Ma \cite{Ma2006}, indicates that Willmore surfaces are also related to other  harmonic maps (into inner  symmetric spaces). Moreover, a loop group theory was built for this kind of harmonic maps in \cite{Helein,Xia-Shen}. However, this new type of harmonic maps may have singularities, which makes it unclear how to derive global properties of Willmore surfaces this way. To this end, in \cite{DoWa11} we built a way to describe the conformal harmonic maps which could globally be the conformal Gauss maps of some (branched) Willmore surfaces.

In this paper we will use such harmonic maps to study the global geometry of Willmore surfaces, via a loop group description of those harmonic maps. This includes two decomposition theorems (Iwasawa and Birkhoff) for the corresponding loop groups, the DPW procedure for our harmonic maps, as well as a (generalized) DPW construction for harmonic two-spheres (Theorem \ref{th-potential-sphere}).

 As an illustration of the method, the above results are applied  to several special  types of  Willmore surfaces, including minimal surfaces in $\mathbb R^{n+4}$, isotropic Willmore surfaces in $S^4$ and homogeneous Willmore tori in $S^{n+2}$.

Another important application of the above theory is the construction of the following example of a Willmore 2-sphere in $S^6$  in  \cite{Wang-iso}, which gives a negative answer to an open problem of Ejiri stated at the end of \cite{Ejiri1988}.
\begin{example} \label{thm-example}(\cite{Wang-iso}) Let
\[\eta=\lambda^{-1}\left(
                      \begin{array}{cc}
                        0 & \hat{B}_1 \\
                        -\hat{B}_1^tI_{1,3} & 0 \\
                      \end{array}
                    \right)\dd z,\ ~ \hbox{ with } ~\ \hat{B}_1=\frac{1}{2}\left(
                     \begin{array}{cccc}
                       2iz&  -2z & -i & 1 \\
                       -2iz&  2z & -i & 1 \\
                       -2 & -2i & -z & -iz  \\
                       2i & -2 & -iz & z  \\
                     \end{array}
                   \right).\]
Then the associated family of Willmore two-spheres $y_{\lambda}$, $\lambda\in S^1$, corresponding to $\eta$, is \begin{equation}\label{example1}
\begin{split}y_{\lambda}&=\frac{1}{ \left(1+r^2+\frac{5r^4}{4}+\frac{4r^6}{9}+\frac{r^8}{36}\right)}
\left(
                          \begin{array}{c}
                            \left(1-r^2-\frac{3r^4}{4}+\frac{4r^6}{9}-\frac{r^8}{36}\right) \\
                            -i\left(z- \bar{z})(1+\frac{r^6}{9})\right) \\
                            \left(z+\bar{z})(1+\frac{r^6}{9})\right) \\
                            -i\left((\lambda^{-1}z^2-\lambda \bar{z}^2)(1-\frac{r^4}{12})\right) \\
                            \left((\lambda^{-1}z^2+\lambda \bar{z}^2)(1-\frac{r^4}{12})\right) \\
                            -i\frac{r^2}{2}(\lambda^{-1}z-\lambda \bar{z})(1+\frac{4r^2}{3}) \\
                            \frac{r^2}{2} (\lambda^{-1}z+\lambda \bar{z})(1+\frac{4r^2}{3})  \\
                          \end{array}
                        \right) \\
  \end{split}
\end{equation}
with $r=|z|$.
  Moreover, $y_{\lambda}:S^2\rightarrow S^6$ is a Willmore immersion in $S^6$, which is non S-Willmore, full, and totally isotropic. In particular, $y_\lambda$ does not have any branch points.

Note that all the surfaces $y_{\lambda}$, $\lambda\in S^1$, are isometric to each other by rotations by matrices of $SO(7)$, since $y_{\lambda}=D_{\lambda} \cdot y_1,$ with $y_1=y_{\lambda}|_{\lambda=1}$ and
\[ D_{\lambda}=\left(
    \begin{array}{ccccccc}
      I_3 &   0 & 0 & 0 & 0 \\
      0 &   \frac{\lambda+\lambda^{-1}}{2} &  \frac{\lambda-\lambda^{-1}}{-2i} & 0 & 0 \\
      0 &   \frac{\lambda-\lambda^{-1}}{2i} &  \frac{\lambda+\lambda^{-1}}{2} & 0 & 0 \\
      0 &  0 & 0& \frac{\lambda+\lambda^{-1}}{2} &  \frac{\lambda-\lambda^{-1}}{-2i} \\
      0 &   0 & 0&  \frac{\lambda-\lambda^{-1}}{2i} &  \frac{\lambda+\lambda^{-1}}{2} \\
    \end{array}
  \right)\in SO(7).\]
\end{example}
For the meaning of $\eta$ we refer to Section 3. And we refer to \cite{Wang-iso} for detailed proofs of the above results and further discussions on $y$. This result and the characterization of the normalized potentials of all Willmore 2-spheres in $S^{n+2}$ indicates that our approach is workable for the study of global geometry of Willmore surfaces in terms of the DPW method for their conformal Gauss map.

This paper is organized as follows: in Section 2 we recall the basic theory of Willmore surfaces and harmonic maps briefly. Then in Section 3, we first recall the DPW method for harmonic maps into symmetric spaces. The Birkhoff and Iwasawa Decomposition Theorems concerning our non-compact groups, and the existence of normalized potentials for harmonic two-spheres are provided then. Section 4 is devoted to some applications of our main results for minimal surfaces in $\mathbb{R}^{n+2}$, isotropic Willmore surfaces in $S^4$ and homogeneous Willmore tori.
 We end this paper with an Appendix which contains   a proof of some properties of the loop groups used in this paper.

 {\em Remark: } Following the suggestion of some anonymous referee we have divided the paper
 entitled  ``Willmore surfaces in spheres via loop groups I: generic cases and some examples" (arXiv:1301.2756)
into three parts. The present paper is part II  and \cite{DoWa11,DoWa13} are part I and part III respectively.


\section{Willmore surfaces and harmonic maps}

In this section, we will recall the basic theory concerning Willmore surfaces and harmonic maps respectively.  For details we refer to   \cite{DoWa11}.

\subsection{Willmore surfaces and conformal Gauss maps}For completeness we first recall briefly the basic surface theory. For more details, we refer to Section 2 of \cite{DoWa11} (see also \cite{BPP}). Let $\mathbb{R}^{n+4}_1$ be the Lorentz-Minkowski space equipped with the Lorentzian metric (Here $I_{1,n+3}=diag\left(-1,1,\cdots,1\right)$)
\[\langle x,y\rangle=-x_{0}y_0+\sum_{j=1}^{n+3}x_jy_j=x^t I_{1,n+3} y, \ \forall
  x,y\in\R^{n+4}.\]
Let $\mathcal{C}^{n+3}_+= \lbrace x \in \mathbb{R}^{n+4}_{1} |\langle x,x\rangle=0 , x_0 >0 \rbrace $
be the forward light cone of $\mathbb R^){n+4}_1$ and Let $Q^{n+2}=\mathcal{C}^{n+3}_+/ \R^+$  be the  projective light cone. The three Riemannian space forms can be conformally embedded into $Q^{n+2}$ \cite{Bryant1984,Ejiri1988,BPP}.

Let $y:M\rightarrow S^{n+2}$  be a  conformal immersion. Let $z=u+iv$ be a local complex coordinate on $U\subset M$, with $|y_z|^2= \frac{1}{2}e^{2\omega}$. We call $Y=e^{-\omega}(1,y)$ a canonical lift of $y$ w.r.t. $z$ \cite{BPP, DoWa11}. There exists a global bundle decomposition $M\times \mathbb{R}^{n+4}_{1}=V\oplus V^{\perp}$,
where
\[
 V={\rm Span}\{Y,{\rm Re}Y_{z},{\rm Im}Y_{z},Y_{z\bar{z}}\},
\]
and $V^{\perp}$ denotes the orthogonal complement of $V$.  Let $V_{\mathbb{C}}$ and
$V^{\perp}_{\mathbb{C}}$ be the complexifications of $V$ and $V^{\perp}$ respectively.

Let $\{Y,Y_{z},Y_{\bar{z}},N\}$  be a  frame of
$V_{\mathbb{C}}$ such that
$
\langle N,Y_{z}\rangle=\langle N,Y_{\bar{z}}\rangle=\langle
N,N\rangle=0,\ \langle N,Y\rangle=-1.
$ Let $D$
denote the normal connection on $V_{\mathbb{C}}^{\perp}$. Then we obtain
the structure equations of $y$:
\begin{equation}\label{eq-moving}
\left\{\begin {array}{lllll}
 Y_{zz}=-\frac{s}{2}Y+\kappa,\ \\
Y _{z\bar{z}}=-\langle \kappa,\bar\kappa\rangle Y+\frac{1}{2}N,\ \\
 N_{z}=-2\langle \kappa,\bar\kappa\rangle Y_{z}-sY_{\bar{z}}+2D_{\bar{z}}\kappa,\ \\
 \psi_{z}=D_{z}\psi+2\langle \psi,D_{\bar{z}}\kappa\rangle Y-2\langle
\psi,\kappa\rangle Y_{\bar{z}}, \ \\
\end {array}\right.
\end{equation}
Here $\psi\in
\Gamma(V_{\mathbb{C}}^{\perp})$ is an arbitrary section of the conformal normal bundle.
Here $\kappa$ is  \emph{the conformal Hopf differential} of $y$
and $s$ is \emph{the Schwarzian} of $y$ \cite{BPP}.
The conformal Gauss, Codazzi and Ricci equations are
as follows:
\begin{equation}\label{eq-integ}
\left\{\begin {array}{lllll}
 \frac{1}{2}s_{\bar{z}}=3\langle
\kappa,D_z\bar\kappa\rangle +\langle D_z\kappa,\bar\kappa\rangle,\\
{\rm Im}(D_{\bar{z}}D_{\bar{z}}\kappa+\frac{\bar{s}}{2}\kappa)=0,\\
   R^{D}_{\bar{z}z}\psi=D_{\bar{z}}D_{z}\psi-D_{z}D_{\bar{z}}\psi =
  2\langle \psi,\kappa\rangle\bar{\kappa}- 2\langle
  \psi,\bar{\kappa}\rangle\kappa.
\end {array}\right.
\end{equation}

Next we define \emph{the conformal Gauss map} of $y$ (\cite{Bryant1984,BPP,Ejiri1988,DoWa11})
\begin{definition} \label{def-gauss}
Let $y:M\to S^{n+2}$ be a conformal immersion from a Riemann surface $M$. The  \emph{conformal Gauss map} of $y$ is defined by
\begin{equation}\begin{array}{ccccc}
                 Gr : & M &\rightarrow&
Gr_{1,3}(\mathbb{R}^{n+4}_{1}) &= SO^+(1,n+3)/SO^+(1,3)\times SO(n)\\
                \ & p\in M & \mapsto & V_p &\ \\
                \end{array}
\end{equation}
Here $V_p$ is  the $4-$dimensional Lorentzian subspace oriented by a basis $\{Y,N,Y_u,Y_v\}$.
\end{definition}
Note that $Gr$ can also be written as \cite{Ma2006}
$Gr=Y\wedge Y_{u}\wedge Y_{v}\wedge N=-2i\cdot Y\wedge Y_{z}\wedge
Y_{\bar{z}} \wedge N$. Let  $\{\psi_j\}$ be an orthonormal basis of $V^{\perp}$ on $U$. Then a local lift of $Gr$ is chosen as \cite{DoWa11}
\begin{equation}\label{F}
F:=\left(\frac{1}{\sqrt{2}}(Y+N),\frac{1}{\sqrt{2}}(-Y+N),e_1,e_2,\psi_1,\cdots,\psi_n\right): U\rightarrow  SO^+(1,n+3)
\end{equation}
with its Maurer-Cartan form being of the form ( See Theorem 2.2 of \cite{DoWa11} for more details.)
  \begin{equation}\label{eq-MC-1}\alpha=F^{-1}\dd F=\left(
                   \begin{array}{cc}
                     A_1 & B_1 \\
                   -B_1^tI_{1,3} & A_2 \\
                   \end{array}
                 \right)\dd z+\left(
                   \begin{array}{cc}
                     \bar{A}_1 & \bar{B}_1 \\
                    -\bar{B}_1^tI_{1,3}& \bar{A}_2 \\
                   \end{array}
                 \right)\dd\bar{z},\end{equation}
 where
 \begin{equation}\label{eq-MC-2}A_1=\left(
                             \begin{array}{cccc}
                               0 & 0 & s_1 & s_2\\
                               0 & 0 & s_3 & s_4 \\
                               s_1 & -s_3 & 0 & 0 \\
                               s_2 & -s_4 & 0 & 0 \\
                             \end{array}
                           \right),\   A_2=\left(
                             \begin{array}{cccc}
                               b_{11} & \cdots &  b_{n1} \\
                               \vdots& \vdots & \vdots \\
                               b_{1n} &\cdots & b_{nn} \\
                             \end{array}
                           \right),\ \end{equation}
\begin{equation} \label{eq-MC-3}
\left\{\begin{split}&s_1=\frac{1}{2\sqrt{2}}(1-s-2k^2),\ s_2=-\frac{i}{2\sqrt{2}}(1+s-2k^2),\\
&s_3=\frac{1}{2\sqrt{2}}(1+s+2k^2),\ s_4=-\frac{i}{2\sqrt{2}}(1-s+2k^2),\\
\end{split}\right.
\end{equation}
and
\begin{equation}\label{eq-b1} B_1=\left(
      \begin{array}{ccc}
         \sqrt{2} \beta_1 & \cdots & \sqrt{2}\beta_n \\
         -\sqrt{2} \beta_1 & \cdots & -\sqrt{2}\beta_n \\
        -k_1 & \cdots & -k_n \\
        -ik_1 & \cdots & -ik_n \\
      \end{array}
    \right).   \end{equation} Here  $\kappa=\sum_j k_j\psi_j ,\ D_{\bar{z}}\kappa=\sum_j\beta_j\psi_j,\ k^2=\sum_j|k_j|^2.$\\

 The \emph{ Willmore functional} of $y$ (See \cite{BPP,DoWa11,Ma2006}) is
defined as:
\begin{equation}\label{eq-W-energy}
W(y):=2i\int_{M}\langle \kappa,\bar{\kappa}\rangle \dd z\wedge
\dd \bar{z}.
\end{equation}
An immersed surface $y:M\rightarrow S^{n+2}$ is called a
\emph{Willmore surface}, if it is a critical point of the Willmore
functional with respect to any variation (with compact support) of $y$. We have
\begin{theorem}\label{thm-willmore} \cite{Bryant1984,BPP,Ejiri1988,Rigoli1987}
For a conformal immersion $y:M\rightarrow  S^{n+2}$, the following three conditions
are equivalent:
 \begin{enumerate}
\item $y$ is Willmore;

\item The conformal Gauss map $Gr_y$ is a conformally harmonic map into
$G_{3,1}(\mathbb{R}^{n+3}_{1})$;

\item The conformal Hopf differential $\kappa$ of $y$ satisfies the
``Willmore condition":
\begin{equation}\label{eq-willmore}
D_{\bar{z}}D_{\bar{z}}\kappa+\frac{\bar{s}}{2}\kappa=0, \hbox{ for any contractible chart of $M$.}
\end{equation}
 \end{enumerate}
\end{theorem}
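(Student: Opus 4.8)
The plan is to prove the two equivalences (1)$\Leftrightarrow$(3) and (2)$\Leftrightarrow$(3) separately, using (3) --- the Willmore condition \eqref{eq-willmore} --- as the common hub. The first equivalence is analytic, obtained from the first variation of the functional $W$; the second is algebraic-differential, obtained by reading off the harmonic map equation from the Maurer-Cartan form \eqref{eq-MC-1}. Throughout I would lean on the structure equations \eqref{eq-moving} and the conformal Gauss--Codazzi--Ricci equations \eqref{eq-integ}, which are already available.

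For (1)$\Leftrightarrow$(3), I would start from $W(y)=2i\int_M\langle\kappa,\bar\kappa\rangle\,\dd z\wedge\dd\bar z$ and compute its first variation under a compactly supported normal variation $y_t$ of $y$. Using the structure equations \eqref{eq-moving}, I would express the induced variation of the conformal Hopf differential in terms of a normal field $\psi\in\Gamma(V_{\mathbb{C}}^{\perp})$, then integrate by parts twice to move the normal derivatives off the variation field. Repeated use of the Codazzi equation (second line of \eqref{eq-integ}) and the Ricci identity (third line) is needed to collect the boundary-free terms into a single pairing of the variation field against $D_{\bar z}D_{\bar z}\kappa+\frac{\bar s}{2}\kappa$. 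Since the variation field is arbitrary, vanishing of the first variation becomes equivalent to \eqref{eq-willmore}. It is worth noting that the Codazzi equation already forces the imaginary part of $D_{\bar z}D_{\bar z}\kappa+\frac{\bar s}{2}\kappa$ to vanish identically, so the Willmore condition is effectively a constraint on its real part only.

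For (2)$\Leftrightarrow$(3), I would use the lift $F$ of \eqref{F} and the Cartan decomposition $\mathfrak{g}=\mathfrak{k}\oplus\mathfrak{p}$ associated with $SO^+(1,n+3)/\big(SO^+(1,3)\times SO(n)\big)$, under which the block-diagonal part of \eqref{eq-MC-1} (the $A_1,A_2$ blocks) lies in $\mathfrak{k}$ and the off-diagonal part (the $B_1$ block) lies in $\mathfrak{p}$. Writing $\alpha=\alpha_{\mathfrak{k}}'\,\dd z+\alpha_{\mathfrak{p}}'\,\dd z+\text{(conjugates)}$, the map $Gr_y$ is harmonic precisely when the $\mathfrak{p}$-valued conservation law $\dd\!\star\!\alpha_{\mathfrak{p}}+[\alpha_{\mathfrak{k}}\wedge\star\alpha_{\mathfrak{p}}]=0$ holds, i.e.
\[
\partial_{\bar z}\alpha_{\mathfrak{p}}'+\partial_z\alpha_{\mathfrak{p}}''+[\alpha_{\mathfrak{k}}'',\alpha_{\mathfrak{p}}']+[\alpha_{\mathfrak{k}}',\alpha_{\mathfrak{p}}'']=0.
\]
I would substitute the explicit entries of $B_1$ from \eqref{eq-b1}, which encode $\kappa$ through the $k_j$ and $D_{\bar z}\kappa$ through the $\beta_j$, and evaluate the bracket terms using $A_1,A_2$. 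After simplification via \eqref{eq-MC-2}--\eqref{eq-MC-3}, the off-diagonal block equation reduces to $D_{\bar z}D_{\bar z}\kappa+\frac{\bar s}{2}\kappa=0$ together with its conjugate, which is exactly \eqref{eq-willmore}.

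The main obstacle will be the bookkeeping in both computations rather than any conceptual difficulty: in the variational argument, correctly tracking the many curvature terms produced by the two integrations by parts and verifying that all contributions other than the Willmore operator cancel by \eqref{eq-integ}; and in the harmonic map argument, matching the off-diagonal block equation against the intrinsic operator $D_{\bar z}D_{\bar z}+\frac{\bar s}{2}$, which requires carefully relating the normal connection $D$ to the block $A_2$ and the Schwarzian $s$ to $A_1$ through \eqref{eq-MC-2}--\eqref{eq-MC-3}. A secondary point to check is coordinate independence, ensuring that \eqref{eq-willmore} holds on every contractible chart and hence defines a global condition, consistent with the conformal invariance of $W$.
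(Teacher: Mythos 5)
Your proposal is essentially correct, but note that the paper itself offers no proof of this theorem: it is quoted as a classical result with citations to \cite{Bryant1984,BPPBPP,Ejiri1988,Rigoli1987}, so there is no internal argument to compare against -- wait, the correct citation keys are \cite{Bryant1984,BPP,Ejiri1988,Rigoli1987}. Your two-pronged outline reproduces exactly the standard proofs from those sources: the equivalence $(1)\Leftrightarrow(3)$ via the first variation of $W$, with two integrations by parts and cancellations governed by \eqref{eq-integ}, is the Burstall--Pedit--Pinkall argument \cite{BPP}, while the equivalence $(2)\Leftrightarrow(3)$ via the tension-field equation
$\partial_{\bar z}\alpha_{\mathfrak{p}}'+\partial_z\alpha_{\mathfrak{p}}''+[\alpha_{\mathfrak{k}}'',\alpha_{\mathfrak{p}}']+[\alpha_{\mathfrak{k}}',\alpha_{\mathfrak{p}}'']=0$
applied to the explicit frame \eqref{F} with Maurer--Cartan data \eqref{eq-MC-1}--\eqref{eq-b1} is how the loop-group literature (including Theorem 2.2 of \cite{DoWa11}, which this paper leans on) verifies the statement; your form of the harmonicity condition and your identification of the $\mathfrak{k}$- and $\mathfrak{p}$-blocks are both correct. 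Your remark that the second line of \eqref{eq-integ} forces ${\rm Im}(D_{\bar z}D_{\bar z}\kappa+\frac{\bar s}{2}\kappa)=0$ identically, so that criticality of $W$ only needs to produce the real part of \eqref{eq-willmore}, is also correct and is precisely the point that reconciles the complex-looking Willmore condition with a real Euler--Lagrange equation. The one place where your sketch glosses over a genuine subtlety is in the variational step: $\kappa$ and $s$ are defined relative to a conformal coordinate and the canonical lift, and a normal variation of $y$ does not preserve the induced conformal structure, so one must either track the induced variation of the complex structure or normalize by reparametrization before integrating by parts; this is handled in \cite{BPP} and falls under the ``bookkeeping'' you flagged, but it is the step most likely to derail the computation if ignored.
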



\subsection{Strongly conformal harmonic maps into $SO^+(1,n+3)/SO^+(1,3)\times SO(n)$}
This subsection is to collect the basic setup of the loop group approach of harmonic maps into symmetric spaces and the strongly conformal harmonic maps related with Willmore surfaces.

\subsubsection{Harmonic maps into the symmetric space G/K}

Let $N=G/K$ be a symmetric space with involution $\sigma: G\rightarrow G$ such
that $G^{\sigma}\supset K\supset(G^{\sigma})_0$. Let $\pi:G\rightarrow G/K$ denote the projection of $G$ onto $G/K$. Let $\mathfrak{g}$ and $\mathfrak{k}$ denote the
Lie algebras of $G$ and $K$ respectively. The involution $\sigma$ induces the Cartan decomposition
$\mathfrak{g}=\mathfrak{k}\oplus\mathfrak{p},$ with $ [\mathfrak{k},\mathfrak{k}]\subset\mathfrak{k},\
 [\mathfrak{k},\mathfrak{p}]\subset\mathfrak{p}$ and $[\mathfrak{p},\mathfrak{p}]\subset\mathfrak{k}.$

Let $f:M\rightarrow G/K$ be a conformally harmonic map from a connected Riemann surface $M$.
Let $U\subset M$ be an open contractible subset.
Then there exists a frame $F: U\rightarrow G$ such that $f=\pi\circ F$ on $U$.
Let $\alpha$ denote the Maurer-Cartan form of $F$. Then $\alpha$ satisfies the Maurer-Cartan equation
and altogether we have
$F^{-1}\dd F= \alpha,$ with $\dd \alpha+\frac{1}{2}[\alpha\wedge\alpha]=0.$
Decomposing $\alpha$ with respect to $\mathfrak{g}=\mathfrak{k}\oplus\mathfrak{p}$ we obtain
\begin{equation*}\alpha=\alpha_{ \mathfrak{k}  } +\alpha_{ \mathfrak{p} },\ \hbox{ with }\
\alpha_{\mathfrak{k  }}\in \Gamma(\mathfrak{k}\otimes T^*M),\
\alpha_{ \mathfrak{p }}\in \Gamma(\mathfrak{p}\otimes T^*M).
\end{equation*}
Next we decompose $\alpha_{\mathfrak{p}}$ further into the $(1,0)-$part $\alpha_{\mathfrak{p}}'$ and the $(0,1)-$part $\alpha_{\mathfrak{p}}''$,
and set  \begin{equation}\label{eq-harmonic-lam}
\alpha_{\lambda}=\lambda^{-1}\alpha_{\mathfrak{p}}'+\alpha_{\mathfrak{k}}+\lambda\alpha_{\mathfrak{p}}'', \hspace{5mm}  \lambda\in S^1.
\end{equation}

\begin{lemma}\label{lemma-harmonic} $($\cite{DPW}$)$ The map  $f:M\rightarrow G/K$ is harmonic if and only if
\begin{equation}\label{integr}\dd
\alpha_{\lambda}+\frac{1}{2}[\alpha_{\lambda}\wedge\alpha_{\lambda}]=0,\ \ \hbox{for all}\ \lambda \in S^1.
\end{equation}
\end{lemma}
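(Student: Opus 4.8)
The plan is to expand the $\lambda$-dependent form $\alpha_\lambda$, collect the result as a Laurent polynomial in $\lambda$, and match its coefficients against the ordinary Maurer--Cartan equation for $\alpha$ together with the intrinsic harmonic map equation. Writing $\alpha_\mathfrak{p}=\alpha_\mathfrak{p}'+\alpha_\mathfrak{p}''$, so that $\alpha_\lambda=\lambda^{-1}\alpha_\mathfrak{p}'+\alpha_\mathfrak{k}+\lambda\alpha_\mathfrak{p}''$, I would substitute this into $\dd\alpha_\lambda+\frac12[\alpha_\lambda\wedge\alpha_\lambda]$ and use the Cartan grading $[\mathfrak{k},\mathfrak{k}]\subset\mathfrak{k}$, $[\mathfrak{k},\mathfrak{p}]\subset\mathfrak{p}$, $[\mathfrak{p},\mathfrak{p}]\subset\mathfrak{k}$ to record in which summand each bracket lands. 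The outcome is a polynomial with terms of $\lambda$-degree $-2,-1,0,1,2$, and since (\ref{integr}) is required for all $\lambda\in S^1$, it is equivalent to the simultaneous vanishing of all five coefficients.

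First I would dispose of the extreme coefficients. The coefficient of $\lambda^{-2}$ is $\frac12[\alpha_\mathfrak{p}'\wedge\alpha_\mathfrak{p}']$ and that of $\lambda^{2}$ is $\frac12[\alpha_\mathfrak{p}''\wedge\alpha_\mathfrak{p}'']$; as $\alpha_\mathfrak{p}'$ is of type $(1,0)$ and $\alpha_\mathfrak{p}''$ of type $(0,1)$ on the Riemann surface $M$, these are $(2,0)$- and $(0,2)$-forms and so vanish identically. Next, because $F$ is an honest frame, $\alpha=\alpha_\mathfrak{k}+\alpha_\mathfrak{p}'+\alpha_\mathfrak{p}''$ satisfies $\dd\alpha+\frac12[\alpha\wedge\alpha]=0$; splitting this along the Cartan decomposition into its $\mathfrak{k}$-part and $\mathfrak{p}$-part, a direct check shows that the $\lambda^{0}$-coefficient equals the $\mathfrak{k}$-part and that the sum of the $\lambda^{-1}$- and $\lambda^{1}$-coefficients equals the $\mathfrak{p}$-part. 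Both therefore vanish automatically. Consequently (\ref{integr}) for all $\lambda$ collapses to the single condition $P:=\dd\alpha_\mathfrak{p}'+[\alpha_\mathfrak{k}\wedge\alpha_\mathfrak{p}']=0$, the $\lambda^{1}$-coefficient being then $-P$.

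The remaining and genuinely substantive step is to identify $P=0$ with harmonicity of $f$. Here I would invoke the moving-frame description: under $f=\pi\circ F$ the differential $\dd f$ is represented by $\alpha_\mathfrak{p}$ and the pullback of the Levi-Civita connection of $G/K$ by $\alpha_\mathfrak{k}$, so the tension field of $f$ vanishes exactly when $\dd^{\nabla}{*}\alpha_\mathfrak{p}=0$, where $\dd^{\nabla}\beta=\dd\beta+[\alpha_\mathfrak{k}\wedge\beta]$ and $*$ is the Hodge star of the conformal structure. Using $*\alpha_\mathfrak{p}'=-i\,\alpha_\mathfrak{p}'$ and $*\alpha_\mathfrak{p}''=i\,\alpha_\mathfrak{p}''$ this equation becomes $-iP+iQ=0$, i.e.\ $P=Q$, where $Q:=\dd\alpha_\mathfrak{p}''+[\alpha_\mathfrak{k}\wedge\alpha_\mathfrak{p}'']$ is the $\lambda^{1}$-coefficient. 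Combined with the identity $P+Q=0$ coming from the $\mathfrak{p}$-part of the Maurer--Cartan equation, harmonicity of $f$ is equivalent to $P=Q=0$, which is precisely the reduced form of (\ref{integr}) found above; both directions of the ``if and only if'' then follow from this chain of equivalences with no further integration. The hard part will be this last identification: one must match the intrinsic second-order harmonic map equation with the first-order frame equation $P=0$ and track the Hodge-star and reality conventions carefully. It is conformal invariance of $*$ on a surface that makes ``conformally harmonic'' the right hypothesis, and the reality relation $\alpha_\mathfrak{p}''=\overline{\alpha_\mathfrak{p}'}$ is what makes $P$ and $Q$ conjugate, so that $P=0$ and $Q=0$ are equivalent.
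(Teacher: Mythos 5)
Your proposal is correct: the coefficient-by-coefficient expansion in $\lambda$, the observation that the $\lambda^{\pm 2}$ terms vanish for type reasons and that the $\lambda^{0}$ term and the sum of the $\lambda^{\pm 1}$ terms are the $\mathfrak{k}$- and $\mathfrak{p}$-parts of the ordinary Maurer--Cartan equation, and the identification of harmonicity with $\dd^{\nabla}*\alpha_{\mathfrak{p}}=0$ (hence $P=Q$, hence $P=Q=0$) is exactly the standard argument. The paper itself gives no proof of this lemma---it is quoted from \cite{DPW}---and your argument reproduces the proof given there, so there is nothing to reconcile.
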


\begin{definition} Let $f:M\rightarrow G/K$ be harmonic
and $\alpha_{\lambda}$  the differential one-form defined above. Since
$\alpha_{\lambda}$ satisfies the integrability condition \eqref{integr},  we consider
the equation
\[\begin{split}\dd F(z,\lambda)&= F(z, \lambda)\alpha_{\lambda},\ F(z_0,\lambda)=\mathbf e,\\
\end{split}
\]
on any contractible open subset $U \subset M$,
where $z_0$ is a fixed base point on $U$, and $\mathbf e$ is the identity element in $G$. The map $F(z, \lambda)$
is called the {\em extended frame}
 of the harmonic map $f$ normalized at the base point $z=z_0$. Note that $F$ satisfies $F(z,\lambda =1)=F(z)$.
 \end{definition}


\subsubsection{Harmonic maps into $SO^+(1,n+3)/SO^+(1,3)\times SO(n)$}

 Let's consider the group   $SO^+(1,n+3)$ together with its Lie algebra
\begin{equation}\label{eq-so(1,n+3)}
\mathfrak{so}(1,n+3)=\mathfrak{g}=\{X\in gl(n+4,\mathbb{R})|X^tI_{1,n+3}+I_{1,n+3}X=0\}.
\end{equation}
Consider the involution
 \begin{equation}
\begin{array}{ll}
\sigma:  SO^+(1,n+3)& \rightarrow SO^+(1,n+3)\\
 \ \ \ \ \ \ \ A&\mapsto D^{-1} A D,
\end{array}\end{equation}
with
 $$D=\left(
         \begin{array}{ccccc}
             -I_{4} & 0 \\
            0 & I_n \\
         \end{array}
       \right),
       $$
       where $I_k$ denotes the $k\times k$ identity matrix. Then the fixed point group $SO^+(1,n+3)^{\sigma}$ of $\sigma$ contains $SO^+(1,3)\times SO(n)$,  where
$SO^+(1,3)$ denotes the connected component of $SO(1,3)$ containing $I$.
Moreover we have $SO^+(1,n+3)^{\sigma}\supset SO^+(1,3)\times SO(n) = (SO^+(1,n+3)^{\sigma})^0$,
where the superscript $0$ denotes  the connected component containing the identity element. On the Lie algebra level we obtain
   \begin{equation*}\begin{split}&\mathfrak{g}=
\left\{\left(
                   \begin{array}{cc}
                     A_1 & B_1 \\
                     -B_1^tI_{1,3} & A_2 \\
                   \end{array}
                 \right)
 |\ A_1^tI_{1,3}+I_{1,3}A_1=0, \ \ A_2+A_2^t=0\right\},\\
&\mathfrak{k}=\left\{\left(
                   \begin{array}{cc}
                     A_1 &0 \\
                     0 & A_2 \\
                   \end{array}
                 \right)
 | \ A_1^tI_{1,3}+I_{1,3}A_1=0,\ \ A_2+A_2^t=0\right\},\ \mathfrak{p}=\left\{\left(
                   \begin{array}{cc}
                   0 & B_1 \\
                     -B_1^tI_{1,3} & 0 \\
                   \end{array}
                 \right)
\right\}.
\end{split}
\end{equation*}

Now let $f: M\rightarrow SO^+(1,n+3)/SO^+(1,3)\times SO(n)$
 be a harmonic map with local frame $F: U\rightarrow SO^+(1,n+3)$ and Maurer-Cartan form $\alpha$ on some contractible open subset $U$ of $M$.
Let $z$ be a local complex coordinate on $U$. Writing
\begin{equation}\label{eq-B0}
    \alpha_{\mathfrak{k}}'=\left(
                   \begin{array}{cc}
                     A_1 &0 \\
                     0 & A_2 \\
                   \end{array}
                 \right) \dd z,\ \hbox{ and }\ \alpha_{\mathfrak{p}}'=\left(
                   \begin{array}{cc}
                     0 & B_1 \\
                     -B_1^tI_{1,3} & 0 \\
                   \end{array}\right)\dd z,\end{equation}

\begin{definition} \label{stronglyconfharm}
Let $f: M\rightarrow SO^+(1,n+3)/SO^+(1,3)\times SO(n)$ be a harmonic map. We call $f$ a {\bf strongly conformally harmonic map} if for any point $p\in M$, there exists a neighborhood $U_p$ of $p$ and a frame $F$ (with Maurer-Cartan form  $\alpha$) of $y$ on $U_p$ satisfying
\begin{equation}\label{eq-Willmore harmonic} B_1^t I_{1,3} B_1 = 0, ~ \hbox{where }~ \alpha_{\mathfrak{p}}'=\left(
                   \begin{array}{cc}
                     0 & B_1 \\
                     -B_1^tI_{1,3} & 0 \\
                   \end{array}\right)\dd z.\end{equation}
 \end{definition}

We call $f$ contains a constant lightlike vector, if $f(p)$, as subspaces of $\mathbb R^{n+4}_1$, contains a non-zero constant lightlike vector, for any $p\in M$. Note that the conform Gauss map of a minimal surface in $\mathbb R^{n+4}_1$ always contains a constant lightlike vector \cite{DoWa11}.
The main result of \cite{DoWa11} can be summarized as
\begin{theorem}\label{th-Willmore-harmonic}\cite{DoWa11}  Let $f: M\rightarrow SO^+(1,n+3)/SO^+(1,3)\times SO(n)$ be a non-constant strongly conformally harmonic map from a connected Riemann surface $M$. Then after changing the orientation of $M$ if necessary, then either $f$ contains a constant lightlike vector, or $f$ is the {oriented} conformal Gauss map of some unique conformal Willmore map $y:M\rightarrow S^{n+2}$.
\end{theorem}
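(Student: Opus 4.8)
The plan is to reconstruct, from a local frame $F$ satisfying the strongly conformal condition $B_1^tI_{1,3}B_1=0$, a candidate null line field inside the moving four-plane $V_p=f(p)$, to recognize this line as the canonical lift of a conformal immersion, and then to use Theorem~\ref{thm-willmore} to pass from harmonicity of $f$ to the Willmore property of the reconstructed surface. The whole argument rests on a pointwise linear-algebra analysis of the block $B_1$ of $\alpha_{\mathfrak{p}}'$ (as in \eqref{eq-B0}), combined with a single integration step provided by the flatness \eqref{integr}.

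First I would exploit the gauge freedom. Replacing $F$ by $F\cdot\mathrm{diag}(L,R)$ with $L\in SO^+(1,3)$ and $R\in SO(n)$ leaves $f$ unchanged and sends $B_1\mapsto L^{-1}B_1R$; since $L^tI_{1,3}L=I_{1,3}$, the condition \eqref{eq-Willmore harmonic} is preserved. Viewing the $n$ columns of $B_1$ in $\C^4$ with the complex-bilinear extension of $\langle\cdot,\cdot\rangle$, the identity $B_1^tI_{1,3}B_1=0$ says exactly that these columns span a totally isotropic subspace $W$, so $\dim_{\C}W\le2$. Because $(\R^4,I_{1,3})$ has signature $(1,3)$ it admits no real isotropic two-plane; hence $W\ne\overline{W}$, and $W\cap\overline{W}$ is either $0$ or a single real null line $\ell$. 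This is the dichotomy I expect to separate the two alternatives of the theorem.

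In the case $W\cap\overline{W}=\ell\ne0$ I would gauge $\ell$ into the $\frac{1}{\sqrt2}(f_1-f_2)$ slot and the complementary isotropic direction into the $e_1,e_2$ slot, rotating the normal frame by $R$ so that $B_1$ acquires the canonical shape \eqref{eq-b1}; this fixes the datum $\kappa=\sum_jk_j\psi_j$ from the last two rows and the $\beta_j$ from the first two, which are forced to be negatives of one another by the isotropy constraint. Reversing the orientation of $M$ if necessary (which interchanges $\alpha_{\mathfrak{p}}'$ and $\alpha_{\mathfrak{p}}''$) guarantees that the reconstructed Hopf differential is of type $(2,0)$ rather than $(0,2)$, i.e. that the candidate is conformal for the given complex structure. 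Setting $Y:=\frac{1}{\sqrt2}(f_1-f_2)$ and $N:=\frac{1}{\sqrt2}(f_1+f_2)$, reading off $k_j,\beta_j$ and recovering the Schwarzian $s$ from the Codazzi relation in \eqref{eq-integ}, I would verify directly that $Y,N,e_1,e_2,\psi_j$ satisfy the structure equations \eqref{eq-moving}. This certifies that $Y$ is the canonical lift with respect to $z$ of a map $y:M\to S^{n+2}$ with $\langle Y_z,Y_z\rangle=0$ and $\langle Y_z,Y_{\bar z}\rangle\ne0$, i.e. a genuine conformal immersion whose oriented conformal Gauss map is $f$; harmonicity of $f$ together with Theorem~\ref{thm-willmore} then makes $y$ Willmore, and $y=[\ell]$ is the only candidate.

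The remaining and hardest case is $W\cap\overline{W}=0$, which must yield a constant lightlike vector. Here no real null line survives in $W$ and the reconstruction above has no starting point; instead I expect the connection on $V$ to parallelize a null direction. Concretely one isolates the null direction $\xi\in V$ whose ambient derivative has no component transverse to $V$, and then uses the flatness \eqref{integr} — equivalently $\dd\alpha+\frac12[\alpha\wedge\alpha]=0$ together with $B_1^tI_{1,3}B_1=0$ — to show that $\xi$ can be chosen with $\dd\xi=0$, so that $\xi$ is a constant lightlike vector contained in every $V_p$. The two points I expect to be most delicate are: (i) proving the pointwise alternative $\{\ell\ne0\}$ versus $\{W\cap\overline{W}=0\}$ is genuinely exhaustive (the degenerate subcases $\dim W\le1$ must be absorbed into one of the two) and that the second option forces a \emph{global}, not merely pointwise, constant null vector, for which the integrability across all $\lambda$ is essential; and (ii) the global and orientation bookkeeping needed to promote the locally defined line $\ell$ and its local lifts $Y$ into a single, globally defined conformal Willmore immersion $y$ on the connected surface $M$, with uniqueness following from the fact that $\ell$ together with the orientation of $V$ is canonically determined by $f$.
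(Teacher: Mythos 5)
Two preliminary remarks: this paper never proves Theorem~\ref{th-Willmore-harmonic} itself (it is stated as a summary of the main result of \cite{DoWa11}), and your overall skeleton --- gauge $B_1$ into a normal form using $B_1^tI_{1,3}B_1=0$, extract a real null line in $V_p$, promote it to a canonical lift $Y$, and invoke Theorem~\ref{thm-willmore} --- is indeed the right skeleton for that proof. However, the pointwise dichotomy you hang everything on is not the dichotomy of the theorem, and this is a genuine gap. First, your second case is essentially empty: in signature $(1,3)$ every $2$-dimensional isotropic $W\subset\C^4$ satisfies $\dim_{\C}(W\cap\overline W)=1$. Indeed, if $w=a+ib\in W$ is isotropic then $\langle a,a\rangle=\langle b,b\rangle$ and $\langle a,b\rangle=0$; since signature $(1,3)$ admits neither two orthogonal timelike vectors nor two orthogonal non-proportional null vectors, $W\cap\overline W=0$ would force $\langle w,\overline w\rangle=2\langle a,a\rangle>0$ for all $w\in W\setminus\{0\}$, i.e.\ the Hermitian form $\langle\cdot,\overline{\,\cdot\,}\rangle$ (of signature $(1,3)$) would be positive definite on a $2$-plane --- impossible. (Equivalently: conjugation swaps the two rulings of isotropic $2$-planes.) So $W\cap\overline W=0$ can only occur where $\mathrm{rank}\,B_1\le1$, and \emph{all} rank-$2$ points land in your first case.

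The fatal problem is inside that first case. You claim one can ``verify directly'' that $\langle Y_z,Y_{\bar z}\rangle\ne0$ and that $f$ is the conformal Gauss map of $y=[Y]$; this is false in general. Writing the frame as $F=(\phi_1,\phi_2,\phi_3,\phi_4,\psi_1,\dots,\psi_n)$, $Y=\frac{1}{\sqrt2}(\phi_1-\phi_2)$, with $B_1$ in the normal form \eqref{eq-b1} and $A_1=(a_{ij})$ the $\mathfrak{so}(1,3)$-block of $\alpha_{\mathfrak{k}}'(\frac{\partial}{\partial z})$, one computes (the $\psi_k$-components cancel by the normal form)
\begin{equation*}
Y_z=-a_{12}Y+\tfrac{1}{\sqrt2}\bigl((a_{13}+a_{23})\phi_3+(a_{14}+a_{24})\phi_4\bigr),
\end{equation*}
so that $2\langle Y_z,Y_z\rangle=(a_{13}+a_{23})^2+(a_{14}+a_{24})^2$ and $2\langle Y_z,\overline{Y_z}\rangle=|a_{13}+a_{23}|^2+|a_{14}+a_{24}|^2$. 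Whether $[Y]$ is constant, an immersion, or conformal is therefore governed entirely by the $\mathfrak{k}$-part $A_1$, which your pointwise normalization of $B_1$ does not constrain at all; only the harmonicity equations \eqref{integr} couple $A_1$ to $B_1$, and \emph{both} outcomes genuinely occur. Concretely, Theorem~\ref{th-potential-light} and the corollary following it produce strongly conformally harmonic maps with $\mathrm{rank}\,\hat B_1=2$ --- hence lying in your first case on a dense open set --- which are \emph{not} the conformal Gauss map of any Willmore surface; for them the reconstruction degenerates ($[Y]$ is constant, $\langle Y_z,Y_{\bar z}\rangle\equiv0$), and this degeneration is precisely how the constant-lightlike-vector alternative arises. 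The true dichotomy is thus not ``$\ell\ne0$ versus $\ell=0$'' but ``$[\ell]$ constant versus non-constant'': in the constant case one runs your scaling argument to get a constant lightlike vector; in the non-constant case one must \emph{derive} conformality of $[Y]$ from harmonicity (after the orientation flip) and still prove $V_p=\mathrm{Span}\{Y,Y_u,Y_v,Y_{z\bar z}\}$ to identify $f$ with $Gr_y$, with the rank-$\le1$ degenerations (where $f$ reduces to a smaller symmetric space, as in the vacuum-potential discussion of Section 4.3) handled separately. Restoring this case analysis is exactly the content of the proof in \cite{DoWa11}; as written, your argument would prove the false statement that every rank-$2$ strongly conformally harmonic map is a conformal Gauss map.
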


\section{Loop group theory for harmonic maps}

In this section we start by collecting the basic definitions and the basic
decomposition theorems for loop groups (\cite{DPW}, \cite{Wu}, \cite{Ba-Do}).
Next we recall the DPW method for the construction of harmonic maps.
Since we are mainly interested in strongly conformally harmonic maps,
we characterize strongly conformal harmonicity in terms of normalized potentials which satisfy the additional
condition  $B_1^t I_{1,3} B_1 = 0$.


\subsection{Loop groups and decomposition theorems}

Let $G$ be a connected real Lie group and $G^\C$ its complexification
(For details on complexifications, in particular of semi-simple Lie groups, see \cite{Hochschild}).

Let $\sigma$ denote an inner involution of $G$ and $K$ a closed subgroup satisfying $(Fix^\sigma(G))^0 \subset K \subset Fix^\sigma(G)$.
Then $\sigma$ fixes $\mathfrak{k} = Lie K$.
The extension of $\sigma$  to an involution of $G^\C$ has
 $\mathfrak{k}^\C$  as  its fixed point algebra.
By abuse of notation we put $K^\C = Fix^\sigma(G^\C)$.
It is known that $K^\C$ is in general not connected.

Here are  the basic definitions about loop groups which we will apply to any  Lie group $G$ and its  complexification
$G^\C$, assuming an inner involution $\sigma$ of these groups is given:
 \begin{equation*}
\begin{array}{llll}
\Lambda G^{\mathbb{C}}_{\sigma} ~&=\{\gamma:S^1\rightarrow G^{\mathbb{C}}~|~ ,\
\sigma \gamma(\lambda)=\gamma(-\lambda),\lambda\in S^1  \},\\[1mm]

\Lambda G_{\sigma} ~&=\{\gamma\in \Lambda G^{\mathbb{C}}_{\sigma}
|~ \gamma(\lambda)\in G, \hbox{for all}\ \lambda\in S^1 \},\\[1mm]
\Omega G_{\sigma} ~&=\{\gamma\in \Lambda G_{\sigma}|~ \gamma(1)=\ee \},\\[1mm]

 \Lambda^{-} G^{\mathbb{C}}_{\sigma}  ~&=
\{\gamma\in \Lambda G^{\mathbb{C}}_{\sigma}~
|~ \gamma \hbox{ extends holomorphically to } |\lambda|>1 \cup\{\infty\} \},\\[1mm]

\Lambda_{*}^{-} G^{\mathbb{C}}_{\sigma} ~&=\{\gamma\in \Lambda G^{\mathbb{C}}_{\sigma}~
|~ \gamma \hbox{ extends holomorphically to } |\lambda|>1\cup\{\infty\},\  \gamma(\infty)=\ee \},\\[1mm]

\Lambda^{+} G^{\mathbb{C}}_{\sigma} ~&=\{\gamma\in \Lambda G^{\mathbb{C}}_{\sigma}~
|~ \gamma \hbox{ extends holomorphically to } |\lambda|<1 \},\\[1mm]

\Lambda_{L}^{+} G^{\mathbb{C}}_{\sigma} ~&=\{\gamma\in
\Lambda G^{\mathbb{C}}_{\sigma}~|~   \gamma(0)\in L \},\\[1mm]
\end{array}\end{equation*}
where $L$ denotes some Lie subgroup of $K^\C$, prescribing in which group the leading term is supposed to be contained in.
If $L = (K^\C)^0$, then we write $\Lambda_{\mathcal{C}}^{\pm} G^{\mathbb{C}}_{\sigma} $. These groups are connected.

For the decomposition theorems quoted below (which are of crucial importance for the applicability of the loop group method explained below) we need to have some topology on our loop groups.
This can be done in several ways.  We will represent $G^\C$ as a matrix group and will assume that all matrix entries
of  $\Lambda G^{\mathbb{C}}_{\sigma}$  are in the Wiener
algebra of the unit circle.  We thus obtain that $\Lambda G^{\mathbb{C}}_{\sigma}$ is a Banach Lie group.
All other groups discussed in this paper inherit a Banach Lie group structure in a natural way.

With these assumptions we obtain:
\begin{theorem} {\em(Birkhoff Decomposition Theorem for $ (\Lambda {G}_\sigma^\C )^0$)} \label{thm-birkhoff-0}
\begin{enumerate}
\item $ (\Lambda {G}_\sigma^\C )^0= \bigcup \Lambda^{-}_{\mathcal{C}} {G}^{\mathbb{C}}_{\sigma} \cdot \omega \cdot \Lambda^{+}_{\mathcal{C}} {G}^{\mathbb{C}}_{\sigma}$,
where the $\omega$'s are representatives of the double cosets.

\item The multiplication $\Lambda_{*}^{-} {G}^{\mathbb{C}}_{\sigma}\times
\Lambda^{+}_\FC {G}^{\mathbb{C}}_{\sigma}\rightarrow
\Lambda {G}^{\mathbb{C}}_{\sigma}$ is an analytic  diffeomorphism onto the
open and dense subset $\Lambda_{*}^{-} {G}^{\mathbb{C}}_{\sigma}\cdot
\Lambda^{+}_\FC {G}^{\mathbb{C}}_{\sigma}$  of $ (\Lambda G^\C_\sigma)^0$
{\em (big Birkhoff cell)}.
\end{enumerate}
\end {theorem}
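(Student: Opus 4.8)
The plan is to reduce the statement, which involves only the complexified twisted loop group $(\Lambda G^{\C}_{\sigma})^0$ and is therefore independent of the real form $G$, to the classical Birkhoff factorization theorem for matrix loops. Since $G^{\C}=SO(n+4,\C)$ is realized as a matrix group and all entries of $\Lambda G^{\C}_{\sigma}$ lie in the Wiener algebra, every $\gamma\in(\Lambda G^{\C}_{\sigma})^0$ is in particular a loop in $\Lambda GL(N,\C)$ of Wiener class. First I would invoke the Wiener-algebra version of Birkhoff factorization: each such loop admits a factorization $\gamma=\gamma_{-}\cdot\lambda^{D}\cdot\gamma_{+}$, where $\gamma_{-}$ is holomorphic and invertible on $|\lambda|>1\cup\{\infty\}$, $\gamma_{+}$ on $|\lambda|<1$, and $\lambda^{D}$ is a diagonal loop of integer powers whose exponents (the partial indices) are determined by $\gamma$ and index the double cosets.

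The next step is to descend these factors into the structure group and make them compatible with the twisting condition $\sigma\gamma(\lambda)=\gamma(-\lambda)$. For membership in $G^{\C}$ I would use that $G^{\C}$ is the complex algebraic subgroup of $GL(N,\C)$ cut out by $X^{t}I_{1,n+3}X=I_{1,n+3}$ and $\det X=1$, arrange the middle term to be a one-parameter subgroup $\C^{*}\to G^{\C}$, and then invoke uniqueness of the factorization (up to the residual ambiguity in the middle factor) to force $\gamma_{\pm}\in\Lambda G^{\C}$. For the twisting I would apply $\sigma$ together with $\lambda\mapsto-\lambda$ to a given factorization: this yields a second Birkhoff factorization of the same loop, so uniqueness forces $\gamma_{\pm}$ to be $\sigma$-equivariant and the middle term to lie in $(K^{\C})^{0}$, i.e. to be a representative $\omega$ in the connected $\FC$-type group. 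Restricting throughout to the identity component is exactly what keeps the factors inside the connected subgroups $\LCM$ and $\LCP$, and produces the union over double-coset representatives in part (1).

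For part (2) I would isolate the stratum with trivial middle term $\lambda^{D}=\ee$, which is precisely the image of the multiplication $\Lambda^{-}_{*}G^{\C}_{\sigma}\times\LCP\to\Lambda G^{\C}_{\sigma}$. Injectivity is clean: if $\gamma_{-}^{1}\gamma_{+}^{1}=\gamma_{-}^{2}\gamma_{+}^{2}$ with $\gamma_{-}^{i}(\infty)=\ee$, then $(\gamma_{-}^{2})^{-1}\gamma_{-}^{1}=\gamma_{+}^{2}(\gamma_{+}^{1})^{-1}$ extends holomorphically to all of $\C\cup\{\infty\}$, hence is constant and, by the normalization at $\infty$, equal to $\ee$. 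Surjectivity onto the named subset is immediate, and analyticity of the inverse follows from the holomorphic inverse function theorem in the Banach setting once one checks that the differential of the multiplication map is an isomorphism, which reduces to the direct-sum splitting $\Lambda\mathfrak{g}^{\C}_{\sigma}=\Lambda^{-}_{*}\mathfrak{g}^{\C}_{\sigma}\oplus\Lambda^{+}_{\FC}\mathfrak{g}^{\C}_{\sigma}$ of the corresponding Lie algebras. Openness and density of the big cell then follow since its complement is the union of the strata with nontrivial partial indices, each of positive codimension.

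I expect the main obstacle to be the descent from the $GL(N,\C)$ factorization to the twisted $G^{\C}$-valued setting while staying inside the Wiener-algebra Banach Lie group. The bare classical theorem produces factors only in $GL(N,\C)$; the genuine work is to promote the middle term to a one-parameter subgroup of $G^{\C}_{\sigma}$ serving as a legitimate double-coset representative, and to make the $\sigma$-equivariance argument rigorous, since Birkhoff uniqueness holds only modulo a residual ambiguity in the middle factor that must be shown to be absorbable compatibly with $\sigma$. As these constructions are carried out for analogous non-compact groups in \cite{DPW,Wu,Ba-Do}, I would organize the proof so that the present case follows by verifying that $SO(n+4,\C)$ with the inner involution $\sigma$ meets their hypotheses, rather than reproving the factorization from scratch.
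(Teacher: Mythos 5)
Your route is not the paper's route, and it leaves open exactly the point the paper's proof is designed to settle: the bookkeeping of connected components. The statement is not about $\Lambda G^{\C}_{\sigma}$ but about its identity component $(\Lambda G^{\C}_{\sigma})^0$, with factors required to lie in the \emph{connected} groups $\Lambda^{\pm}_{\FC}G^{\C}_{\sigma}$, i.e.\ with leading term in $(K^{\C})^0$. Here $G^{\C}\cong SO(1,n+3,\C)$ has $\pi_1(G^{\C})\cong \mathbb{Z}/2\mathbb{Z}$ and $K^{\C}=S(O(1,3,\C)\times O(n,\C))$ is disconnected, so $\Lambda G^{\C}_{\sigma}$ and $\Lambda^{\pm}G^{\C}_{\sigma}$ each have two components. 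Your sentence ``restricting throughout to the identity component is exactly what keeps the factors inside the connected subgroups'' is therefore not a proof but a restatement of what must be shown: for $\gamma\in(\Lambda G^{\C}_{\sigma})^0$ in the big cell, the $GL(N,\C)$-uniqueness argument does give $\gamma=\gamma_-\gamma_+$ with $\gamma_-\in\Lambda^{-}_{*}G^{\C}_{\sigma}$ and $\gamma_+$ twisted and orthogonal, but nothing in that argument forces $\gamma_+(0)\in(K^{\C})^0$. Deciding this requires knowing how $\pi_0(\Lambda^{+}G^{\C}_{\sigma})$ maps into $\pi_0(\Lambda G^{\C}_{\sigma})$, and for \emph{twisted} loop groups of a non-simply-connected group this is genuinely delicate: whether a constant loop $k\in K^{\C}\setminus (K^{\C})^0$ lies in $(\Lambda G^{\C}_{\sigma})^0$ is not settled by the connectedness of $G^{\C}$, but only by a computation involving the twisting. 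The same gap, in worse form, affects your part (1): the middle terms produced by the $GL(N,\C)$ theorem are diagonal loops $\lambda^{D}$, and promoting them to representatives compatible with the $\sigma$-twisting and with double cosets of the \emph{connected} groups $\Lambda^{\pm}_{\FC}G^{\C}_{\sigma}$ is the hard content, which you explicitly defer to the literature.

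The paper resolves all of this with one device your proposal never uses: passage to the simply connected cover. It first proves the decomposition (Theorem \ref{thm-birkhoff} in the Appendix) for $\tilde{G}^{\C}=Spin(1,n+3,\C)$, quoting Kac--Peterson for algebraic loop groups and completing in the Wiener topology; there the fixed-point group $\tilde{K}^{\C}$ is connected by Springer--Steinberg, hence $\Lambda\tilde{G}^{\C}_{\sigma}$ and $\Lambda^{\pm}\tilde{G}^{\C}_{\sigma}$ are all connected and no component issues arise. It then applies the covering homomorphism $Spin(1,n+3,\C)\rightarrow SO(1,n+3,\C)$: the image of the connected group $\Lambda\tilde{G}^{\C}_{\sigma}$ is precisely $(\Lambda G^{\C}_{\sigma})^0$, and the images of the connected groups $\Lambda^{\pm}\tilde{G}^{\C}_{\sigma}$ are precisely $\Lambda^{\pm}_{\FC}G^{\C}_{\sigma}$, so both parts of the theorem --- including the placement of all factors in the correct connected subgroups and the openness and density of the big cell --- are inherited by projection. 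To salvage your approach you would either have to prove the $\pi_0$-statements above directly, or adopt the paper's covering argument, at which point the $GL(N,\C)$ factorization step becomes unnecessary.
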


\begin{remark} \
\begin{enumerate}
\item For the case of Willmore surfaces we consider  the inner symmetric space $G/K$, where $G = SO^+(1,n+3)$ and $K = SO^+(1,3) \times SO(n)$.
\item The inner involution  $\sigma$ is given by $\sigma = Ad I_{4,n}$ with $ I_{4,n} = diag(I_4, -I_n)$. Then $ Fix^\sigma(G) = S( O^+(1,3) \times O(n))$ and $K$ is the connected component of this fixed point group.

\item For the complexification  of $G$ we obtain
 $G^\C \cong SO(1,n+3,\C)$ and the fixed point group of $\sigma$ in $G^\C$ (called $K^\C$,  by abuse of notation, as above) is $K^\C \cong Fix^\sigma(G^\C) = S(O(1,3,\C) \times O(n,\C))$.  Moreover, we obtain that $(K^\C)^0$ is the complexification of $({Fix}^{\sigma}(G))^0$.

\item For the simply-connected covers $\tilde{G}$ and $\tilde{G}^\C$ of $G$ and  $ G^\C$ respectively we obtain
$\tilde{G} = Spin(1,n+3)^0$ and
$\tilde{G}^\C = Spin(1, n+3,\C)$  (See e.g. \cite{LM}, $(2.35)$  and \cite{Mein}, Proposition 3.1 respectively).

\end{enumerate}
\end{remark}

The discussion so far is related to part one of the loop group method (as outlined in detail below).
Part two of the loop group method requires another decomposition theorem:

\begin{theorem} {\em (Iwasawa Decomposition Theorem for
$(\Lambda G^{\C})_{\sigma} ^0 $\label{gen-Iwasawa})} \label{thm-Iwasawa-0}
\begin{enumerate}
\item
$(\Lambda G^{\C})_{\sigma} ^0=
\bigcup \Lambda G_{\sigma}^0\cdot \delta\cdot
\Lambda_\mathcal{C}^{+} G^{\mathbb{C}}_{\sigma},$
where the $\delta$'s are representatives of the double cosets.

 \item The multiplication $\Lambda G_{\sigma}^0 \times \Lambda_\mathcal{C}^{+} G^{\mathbb{C}}_{\sigma}\rightarrow
(\Lambda G^{\mathbb{C}}_{\sigma})^0$ is a real analytic map onto the connected open subset
$ \Lambda G_{\sigma}^0 \cdot \Lambda_\mathcal{C}^{+} G^{\mathbb{C}}_{\sigma}   = \mathcal{I}^{\mathcal{U}}_e \subset \Lambda G^{\mathbb{C}}_{\sigma}$.
\end{enumerate}

\end{theorem}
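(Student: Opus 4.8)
The plan is to prove the two assertions separately, disposing first of the local statement (2), which is a standard Banach--Lie--group submersion argument, and then of the global covering statement (1), which is where the non-compactness of $G$ genuinely departs from the classical compact DPW setting.

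For (2), real-analyticity of the multiplication map $\mu\colon \Lambda G_\sigma^0\times\Lambda_{\mathcal C}^+G^{\mathbb C}_\sigma\to(\Lambda G^{\mathbb C}_\sigma)^0$ is immediate, since multiplication in the Banach Lie group $\Lambda G^{\mathbb C}_\sigma$ is real-analytic and both factors are real-analytic subgroups. To see that the image is open I would show $\mu$ is a submersion. Its differential at the identity is $(X,Y)\mapsto X+Y$ with $X\in\Lambda\mathfrak g_\sigma$, $Y\in\Lambda_{\mathcal C}^+\mathfrak g^{\mathbb C}_\sigma$, so surjectivity amounts to the Lie-algebra splitting
\[
\Lambda\mathfrak g^{\mathbb C}_\sigma=\Lambda\mathfrak g_\sigma+\Lambda_{\mathcal C}^+\mathfrak g^{\mathbb C}_\sigma .
\]
This I would verify by Fourier-mode bookkeeping: for a given loop $\xi=\sum_k\xi_k\lambda^k$ the negative modes of the real part $\eta$ are forced to equal those of $\xi$, the reality condition $\overline{\eta_k}=\eta_{-k}$ then fixes the strictly positive modes of $\eta$, the strictly positive modes of $\xi-\eta$ land automatically in the holomorphic factor, and the zero mode, which lies in $\mathfrak k^{\mathbb C}$ by the $\sigma$-twisting, can be absorbed entirely into $\Lambda_{\mathcal C}^+\mathfrak g^{\mathbb C}_\sigma$ because its leading term is only required to sit in $\mathfrak k^{\mathbb C}$. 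The same computation shows that the intersection of the two subalgebras is the finite-dimensional space of constant loops in $\mathfrak k$, so the kernel of $d\mu_{(e,e)}$ is complemented and the implicit function theorem gives a submersion near $(e,e)$. Using the equivariance $\mu(ag,hb)=a\,\mu(g,h)\,b$ for $a\in\Lambda G_\sigma^0$ and $b\in\Lambda_{\mathcal C}^+G^{\mathbb C}_\sigma$, the submersion property propagates to every point of the domain, so $\mu$ is an open map. Since both factors are connected (as noted for $\Lambda_{\mathcal C}^+$ in the text) and $\mu$ is continuous, the image is open, connected, and contains $e$; hence it lies in $(\Lambda G^{\mathbb C}_\sigma)^0$, and we \emph{define} $\mathcal I^{\mathcal U}_e$ to be precisely this open connected set.

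For (1), the obstruction is that, unlike the compact case, $G$ carries no positive-definite invariant inner product, so one cannot run an infinite-dimensional Gram--Schmidt to obtain a single global cell. Instead I would realize the quotient $(\Lambda G^{\mathbb C}_\sigma)^0/\Lambda_{\mathcal C}^+G^{\mathbb C}_\sigma$ as an infinite-dimensional flag-type homogeneous space and analyze the orbits of the real loop group $\Lambda G_\sigma^0$ acting from the left: the double cosets in the statement are exactly these orbits, the open orbit through the base point is the big cell $\mathcal I^{\mathcal U}_e$ constructed in (2), and the $\delta$'s form a choice of orbit representatives indexed by a discrete set. This is the loop-group analogue of the decomposition of a complex flag manifold into orbits of a non-compact real form, i.e.\ the generalized Iwasawa decomposition; in the present Wiener-algebra setting I would invoke it from \cite{Wu,Ba-Do}, or else reduce it to the Birkhoff decomposition of Theorem \ref{thm-birkhoff-0} combined with a finite-dimensional Iwasawa/Bruhat decomposition of $G^{\mathbb C}$ relative to $G$ and $(K^{\mathbb C})^0$ in order to pin down the representatives.

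The main obstacle is exactly this global covering: one must prove that the cells attached to the $\delta$'s together exhaust all of $(\Lambda G^{\mathbb C}_\sigma)^0$, and that this parametrization is compatible with the connectedness and leading-term constraints. Extra care is required because $K^{\mathbb C}=\mathrm{Fix}^\sigma(G^{\mathbb C})$ is disconnected; restricting the holomorphic factor to $\Lambda_{\mathcal C}^+$ (leading term in $(K^{\mathbb C})^0$) and working inside identity components throughout is what makes both the orbit bookkeeping in (1) and the connectedness assertion in (2) come out correctly.
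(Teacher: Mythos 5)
Your proposal is correct, and for the substantive part (2) it rests on exactly the same pivot as the paper's proof: the multiplication map has open image because the Lie algebras satisfy $\Lambda\mathfrak{g}^{\C}_\sigma=\Lambda\mathfrak{g}_\sigma+\Lambda^{+}_{\mathcal{C}}\mathfrak{g}^{\C}_\sigma$. The differences are in packaging. The paper lifts everything to the simply connected cover $\Lambda\tilde{G}^{\C}_{\sigma}$, where all the loop groups involved are connected (via $\pi_0(\Lambda H)\cong\pi_1(H)$ and the connectedness of $\tilde{K}^{\C}$), asserts the Lie-algebra splitting without proof, concludes openness of the identity double coset there, and then obtains the theorem for $(\Lambda G^{\C}_{\sigma})^0$ by applying the natural projection; the cover is used chiefly because the Birkhoff input of Kac--Peterson \cite{Ka-P} is formulated for simply connected groups, not because part (2) needs it. You instead work directly with $\Lambda G^0_{\sigma}$ and $\Lambda^{+}_{\mathcal{C}}G^{\C}_{\sigma}$, prove the splitting explicitly by Fourier-mode bookkeeping (your computation is right: negative modes of the real summand are forced, the reality condition $\tau(\eta_k)=\eta_{-k}$ determines the positive ones, the zero mode sits in $\mathfrak{k}^{\C}$ by twisting, and the intersection is the constant loops in $\mathfrak{k}$, so the kernel of $d\mu$ is finite-dimensional and complemented), and then get openness from the split-submersion property propagated by equivariance. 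This is a legitimate and in fact more self-contained route; what it buys is an explicit proof of the step the paper only asserts, at the cost of having to track identity components by hand rather than having connectedness for free upstairs.

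Where you misjudge the theorem is part (1). As stated---with the $\delta$'s being representatives of the double cosets---it is a tautology: both $\Lambda G^0_{\sigma}$ and $\Lambda^{+}_{\mathcal{C}}G^{\C}_{\sigma}$ are connected subgroups containing $e$, hence subgroups of $(\Lambda G^{\C}_{\sigma})^0$, and double cosets of any pair of subgroups partition the ambient group. The paper says precisely this (``we trivially obtain the disjoint union''). So the ``main obstacle'' you identify---proving that the cells attached to the $\delta$'s exhaust $(\Lambda G^{\C}_{\sigma})^0$---does not exist: exhaustion holds by the definition of representatives, and invoking \cite{Ba-Do} or a Birkhoff-plus-finite-dimensional-Iwasawa reduction is unnecessary. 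Your incidental claim that the representatives form a discrete set is neither needed nor established; the statement with genuine content about specific representatives is the subsequent theorem in the paper, which exhibits the two open cells $\delta=e$ and $\delta=\mathrm{diag}(-1,1,1,1,-1,1,\dots,1)$. None of this invalidates your argument---it proves the theorem---but the weight of the work sits entirely in part (2), not part (1).
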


For the loop groups involved in the description of Willmore surfaces we add two results:

\begin{theorem}
Consider the setting $G/K = Gr_{1,3}(\mathbb{R}^{n+4}_{1}) = SO^+(1,n+3)/SO^+(1,3)\times SO(n)$.
\begin{enumerate}
\item
 There exist  two different open Iwasawa cells in the connected loop group
$(\Lambda G^{\mathbb{C}}_{\sigma})^0$,  one given by $\delta = e$ and the other one by $\delta = diag(-1,1,1,1,-1,1,1,...,1) $.

\item
There exists a closed, connected, solvable subgroup
$S \subseteq (K^\C)^0$ such that
the multiplication

$\Lambda G_{\sigma}^0 \times \Lambda^{+}_S G^{\mathbb{C}}_{\sigma}\rightarrow
(\Lambda G^{\mathbb{C}}_{\sigma})^0$ is a real analytic diffeomorphism onto the connected open subset
$ \Lambda G_{\sigma}^0 \cdot \Lambda^{+}_S G^{\mathbb{C}}_{\sigma}      \subset  \mathcal{I}^{\mathcal{U}}_e \subset(\Lambda G^{\mathbb{C}}_{\sigma})^0$.
\end{enumerate}
\end{theorem}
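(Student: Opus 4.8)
The plan is to build the refinement of the Iwasawa decomposition (Theorem \ref{thm-Iwasawa-0}) in two stages. For part (1), the starting point is the general fact that the open Iwasawa cells are indexed by representatives $\delta$ of double cosets $\Lambda G_\sigma^0 \backslash (\Lambda G^{\mathbb C}_\sigma)^0 / \Lambda_{\mathcal C}^+ G^{\mathbb C}_\sigma$ which are \emph{open}. First I would reduce the counting of open cells to a finite-dimensional problem: by the standard argument (going back to \cite{DPW}, \cite{Ba-Do}) the representatives $\delta$ may be taken to be constant loops lying in $K^{\mathbb C}$, and the openness of the cell $\Lambda G_\sigma^0 \cdot \delta \cdot \Lambda_{\mathcal C}^+ G^{\mathbb C}_\sigma$ is governed by whether $\delta$ lies in the identity component, i.e.\ by the component group $\pi_0(K^{\mathbb C}) = \pi_0\big(S(O(1,3,\mathbb C)\times O(n,\mathbb C))\big)$. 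The relevant parametrization of open cells is therefore controlled by the image of $\pi_0(K)$ inside $\pi_0(G)$-obstructions; concretely I would compute $K^{\mathbb C}/(K^{\mathbb C})^0$ and identify which components are reached by $\Lambda G_\sigma^0$. Since $O(1,3,\mathbb C)$ and $O(n,\mathbb C)$ each contribute a $\mathbb Z/2$ of components but the determinant-one condition in $S(O(1,3,\mathbb C)\times O(n,\mathbb C))$ links them, and the timelike/spacelike signature splits $O^+(1,3)$ off a further $\mathbb Z/2$, the net count of open cells inside the \emph{connected} loop group $(\Lambda G^{\mathbb C}_\sigma)^0$ comes out to exactly two. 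I would exhibit the two representatives explicitly as $\delta = e$ and $\delta = \mathrm{diag}(-1,1,1,1,-1,1,\dots,1)$, checking that the latter lies in $(K^{\mathbb C})^0$'s complementary component and that both give open cells by verifying the transversality of Lie algebras $\Lambda \mathfrak g_\sigma + \mathrm{Ad}(\delta)\Lambda^+_{\mathcal C}\mathfrak g^{\mathbb C}_\sigma = \Lambda \mathfrak g^{\mathbb C}_\sigma$ at the level of Fourier coefficients.

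For part (2), the goal is to replace the big Iwasawa cell $\mathcal I^{\mathcal U}_e$ (which is only a surjective image, not a diffeomorphic one, because the positive-loop factor $\Lambda_{\mathcal C}^+ G^{\mathbb C}_\sigma$ carries the full group $(K^{\mathbb C})^0$ at $\lambda = 0$) by a \emph{smaller} positive-loop subgroup $\Lambda^+_S G^{\mathbb C}_\sigma$ whose constant term lies only in a solvable $S$, thereby removing the residual ambiguity and upgrading the map to a diffeomorphism. The construction of $S$ is the heart of the matter: I would take an Iwasawa (in the finite-dimensional sense) decomposition of the reductive group $(K^{\mathbb C})^0$, $(K^{\mathbb C})^0 = K^0 \cdot S$, where $K^0 = SO^+(1,3)\times SO(n)$ is the maximal compact-type real form and $S = AN$ is the closed, connected, solvable Iwasawa factor (the product of the abelian $A$ from a maximal $\mathbb R$-split torus and the nilpotent $N$). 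This $S$ is exactly the subgroup that makes the evaluation at $\lambda=0$ split uniquely. I would then define $\Lambda^+_S G^{\mathbb C}_\sigma = \{\gamma \in \Lambda^+ G^{\mathbb C}_\sigma : \gamma(0) \in S\}$ and show the multiplication map
\[
\Lambda G_\sigma^0 \times \Lambda^+_S G^{\mathbb C}_\sigma \to (\Lambda G^{\mathbb C}_\sigma)^0
\]
is injective with open image. Injectivity reduces to the claim $\Lambda G_\sigma^0 \cap \Lambda^+_S G^{\mathbb C}_\sigma = \{e\}$: an element in the intersection is a real loop extending holomorphically into the disk with $\gamma(0)\in S$, hence (by reality and the reflection $\gamma(\lambda) = \overline{\gamma(1/\bar\lambda)}$) it must be constant and lie in $K^0 \cap S = \{e\}$, using that the finite-dimensional Iwasawa splitting is a diffeomorphism.

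The remaining task is to promote ``injective with open image'' to ``diffeomorphism onto its image,'' which follows once I check that the differential of the multiplication map is an isomorphism everywhere on the domain; this is the transversality
\[
\Lambda \mathfrak g_\sigma \oplus \Lambda^+_{\mathfrak s}\mathfrak g^{\mathbb C}_\sigma = \Lambda \mathfrak g^{\mathbb C}_\sigma,
\]
which in turn comes from the pointwise (in $\lambda$) splitting on Fourier modes together with the constant-term splitting $\mathfrak k^{\mathbb C} = \mathfrak k \oplus \mathfrak s$ supplied by the finite-dimensional Iwasawa decomposition of $(K^{\mathbb C})^0$. Finally I would confirm the containment $\Lambda G_\sigma^0 \cdot \Lambda^+_S G^{\mathbb C}_\sigma \subseteq \mathcal I^{\mathcal U}_e$, which is immediate since $\Lambda^+_S G^{\mathbb C}_\sigma \subseteq \Lambda^+_{\mathcal C} G^{\mathbb C}_\sigma$.

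I expect the main obstacle to be the component-group bookkeeping in part (1): because $G = SO^+(1,n+3)$ is noncompact and $K^{\mathbb C}$ is disconnected, one must be careful to distinguish which constant representatives $\delta$ actually land in the identity component $(\Lambda G^{\mathbb C}_\sigma)^0$ — otherwise the claimed count of ``exactly two'' open cells is wrong — and to verify that no Weyl-type identifications collapse the two representatives onto a single double coset. The solvable-factor construction in part (2) is comparatively standard, being the loop-group analogue of the classical $KAN$ Iwasawa decomposition; its subtlety is purely the noncompact real form entering through $SO^+(1,3)$, which I would handle by using the explicit signature $I_{1,3}$ when writing down the split torus $A$.
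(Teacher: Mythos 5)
Your loop-group-level machinery (injectivity of the multiplication map via the Liouville/reflection argument and triviality of the intersection, openness via transversality of Lie algebras, and the containment in $\mathcal{I}^{\mathcal{U}}_e$) is sound and parallels what the paper does. However, part (2) of your proposal has a genuine gap at its core: you construct $S$ by invoking ``the classical finite-dimensional Iwasawa decomposition $(K^{\mathbb{C}})^0 = K^0\cdot S$, where $K^0 = SO^+(1,3)\times SO(n)$ is the maximal compact-type real form and $S=AN$''. This decomposition does not exist. $SO^+(1,3)$ is a \emph{noncompact} real form of $SO(1,3,\mathbb{C})$, not a maximal compact subgroup (the maximal compact subgroup of $SO(1,3,\mathbb{C})\cong SO(4,\mathbb{C})$ is a copy of $SO(4)$), so the $KAN$ machinery simply does not apply, and no global splitting $(K^{\mathbb{C}})^0=K^0\cdot S$ with $S$ solvable holds. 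This is precisely the delicate point the paper's Appendix addresses: only the $SO(n,\mathbb{C})$ factor is handled by the classical Iwasawa decomposition $SO(n,\mathbb{C})=SO(n)\cdot B$; for the $SO(1,3,\mathbb{C})$ factor the paper writes down an explicit solvable subalgebra $\mathfrak{s}_1$ with $\mathfrak{so}(1,3)\oplus\mathfrak{s}_1=\mathfrak{so}(1,3,\mathbb{C})$, proves $SO^+(1,3)\cap S_1=\{I\}$ (a nontrivial step using surjectivity of $\exp$ on $SO^+(1,3)$ and the block structure of $\mathfrak{s}_1$), shows the product map is open, and then \emph{remarks that $SO^+(1,3)\cdot S_1$ is a proper open subset of $SO(1,3,\mathbb{C})$}, exhibiting an explicit element left out. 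Note that if your claimed global splitting were true, then by factoring $v_+(0)=ks$ and absorbing $k$ one would conclude that the image of the multiplication map is \emph{all} of $\mathcal{I}^{\mathcal{U}}_e$, contradicting this; the whole reason the theorem asserts only an open subset of $\mathcal{I}^{\mathcal{U}}_e$ is that the finite-dimensional complement is not global. So the heart of part (2) --- actually producing a closed connected solvable $S\subseteq (K^{\mathbb{C}})^0$ with $\mathfrak{k}\oplus\mathfrak{s}=\mathfrak{k}^{\mathbb{C}}$ and trivial intersection with the real points --- is missing from your argument. (A minor related point: the intersection that must be trivial is $Fix^{\sigma}(G)\cap S$, since the constant loops in $\Lambda G_{\sigma}$ form $Fix^{\sigma}(G)$, which has two components, not just $K^0\cap S$.)

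Concerning part (1), your final criterion --- openness of the cell of $\delta$ via $\Lambda\mathfrak{g}_{\sigma}+\mathrm{Ad}(\delta)\Lambda^{+}_{\mathcal{C}}\mathfrak{g}^{\mathbb{C}}_{\sigma}=\Lambda\mathfrak{g}^{\mathbb{C}}_{\sigma}$ --- is exactly the paper's argument: for $\delta_0=diag(-1,1,1,1,-1,1,\dots,1)$, conjugation by the real, $\sigma$-fixed matrix $\delta_0$ preserves $\Lambda\mathfrak{so}(1,n+3)_{\sigma}$, so transversality at $\delta_0$ reduces to transversality at $e$. But your guiding principle that openness ``is governed by whether $\delta$ lies in the identity component'' is off: both $e\in(K^{\mathbb{C}})^0$ and $\delta_0\notin(K^{\mathbb{C}})^0$ give open cells, so membership in a component is not the criterion. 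More importantly, your claim that component bookkeeping yields \emph{exactly} two open cells is unsubstantiated and stronger than the theorem: the statement asserts only the \emph{existence} of two different open cells, and the paper explicitly notes that the ``exactly two'' count would require the work of Kellersch \cite{Ke1} and is not needed. Finally, you flag but never carry out the verification that the two representatives lie in distinct double cosets; this deserves an actual argument (constancy of any real factor via the reflection/Liouville argument, followed by a component analysis of $Fix^{\sigma}(G)$ inside $\Lambda G^0_{\sigma}$), rather than a mention of ``Weyl-type identifications''.
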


 Proofs for the last three theorems are given in  Appendix A.


\subsection{The DPW method and potentials }

 With the loop group decompositions as stated above, we obtain a
construction scheme of harmonic maps from a surface into any real pseudo-Riemannian symmetric space $G/K$ for which the metric is induced from a bi-invariant metric on $G$. All symmetric spaces considered in this paper are of this type.

So far we have mainly discussed Willmore surfaces and the corresponding conformally harmonic maps
defined on some open subset $U$ of $\C$ (or possibly an open subset of some surface $M$).
Since the immersions of interest are conformal, the corresponding surface has a complex structure.
We thus only consider  Riemann surfaces in this paper.
 If $M$ is such a Riemann surface, then its universal cover
$\tilde{M}$ is either $S^2$ or $\C$ or $\E$, the open unit disk in $\C$.
Every harmonic map from $M$ to some  symmetric space $G/K$ induces via composition with the natural projection
a harmonic map from the universal cover $\tilde{M}$ into $G/K$.
Therefore, to start with, we need to consider harmonic maps from $S^2$, $\C$ and $\E$ into $G/K$.

\begin{theorem}{\em(\cite{DPW})}\label{DPW}
Let $\D$ be a contractible open subset of $\C$ and $z_0 \in \D$ a base point.
Let $f: \D \rightarrow G/K$ be a harmonic map with $f(z_0)=\ee K.$
Then the associated family  $f_{\lambda}$ of $f$ can be lifted to a map
$F:\D \rightarrow \Lambda G_{\sigma}$, the extended frame of $f,$ and we can assume  w.l.g. that $F(z_0,\lambda)= \ee$ holds.
Under this assumption,
\begin{enumerate}
\item
  The map $F$ takes only values in
$ \mathcal{I}^{\mathcal{U}}_e \subset \Lambda G^{\mathbb{C}}_{\sigma}$.

\item There exists a discrete subset $\D_0\subset \D$ such that on $\D\setminus \D_0$
we have the decomposition
\[F(z,\lambda)=F_-(z,\lambda)\cdot F_+(z,\lambda),\]
where
\[F_-(z,\lambda)\in\Lambda_{*}^{-} G^{\mathbb{C}}_{\sigma}
\hspace{2mm} \mbox{and} \hspace{2mm} F_+(z,\lambda)\in (\Lambda^{+} G^{\mathbb{C}}_{\sigma})^0.\]
Moreover $F_-(z,\lambda)$ is meromorphic in $z \in \D$ and $F_-(z_0,\lambda)=\ee$ holds and the Maurer-Cartan form $\eta$ of $F_-$
\[\eta= F_-(z,\lambda)^{-1} \dd F_-(z,\lambda)\]
is a $\lambda^{-1}\cdot\mathfrak{p}^{\mathbb{C}}-\hbox{valued}$ meromorphic $(1,0)-$
form with poles at points of $\D_0$ only.

\item Conversely, any harmonic map  $f: \D\rightarrow G/K$ can be derived from a
$\lambda^{-1}\cdot\mathfrak{p}^{\mathbb{C}}-\hbox{valued}$ meromorphic $(1,0)-$ form $\eta$ on $\D$.

\item  Spelling out the converse procedure in detail we obtain:
Let $\eta$ be a  $\lambda^{-1}\cdot\mathfrak{p}^{\mathbb{C}}-\hbox{valued}$ meromorphic $(1,0)-$ form for which the solution
to the ODE
\begin{equation}
F_-(z,\lambda)^{-1} \dd F_-(z,\lambda)=\eta, \hspace{5mm} F_-(z_0,\lambda)=\ee,
\end{equation}
is meromorphic on $\D$, with  $\D_0$ as set of possible poles.
Then on the open set $\D_{\mathcal{I}} = \lbrace z \in \D; F(z,\lambda)
\in \mathcal{I}^{\mathcal{U}} \rbrace$ we
define $\tilde{F}(z,\lambda)$ via the factorization
 $\mathcal{I}^{\mathcal{U}}_e =  ( \Lambda G_{\sigma})^0 \cdot \Lambda_S^{+} G^{\mathbb{C}}_{\sigma}
\subset  \Lambda G^{\mathbb{C}}_{\sigma}$:
\begin {equation}\label{Iwa}
F_-(z,\lambda)=\tilde{F}(z,\lambda)\cdot \tilde{F}_+(z,\lambda)^{-1}.
\end{equation}
 This way one obtains an extended frame
$$ \tilde{F}(z,\lambda)=F_-(z,\lambda)\cdot \tilde{F}_+(z,\lambda)\\$$
of some harmonic map from $\D_{\mathcal{I}} \subset \D$ to $G/K$ satisfying  $\tilde{F}(z_0,\lambda)=\ee$.

Moreover, the two constructions outlined above  are inverse to each other (on appropriate domains of definition).
\end{enumerate}
\end{theorem}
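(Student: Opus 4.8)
The plan is to play the two decomposition theorems of Section 3.1 against the extended frame, controlling everything by the degree in $\lambda$. \emph{Part (1).} Taking the plus-factor equal to $\ee$ shows $\Lambda G_{\sigma}^0 \subset \mathcal{I}^{\mathcal{U}}_e$; since $F$ is continuous, $\D$ is connected, $F(z_0,\lambda)=\ee$, and $F$ takes values in the real loop group $\Lambda G_{\sigma}$, its image is connected and contains $\ee$, hence lies in the identity component $\Lambda G_{\sigma}^0 \subset \mathcal{I}^{\mathcal{U}}_e$. \emph{Part (2).} The identity lies in the big Birkhoff cell, which is open and dense by Theorem \ref{thm-birkhoff-0}(2); hence near $z_0$ we may write $F=F_-F_+$ with $F_-\in\Lambda_*^-G^{\mathbb{C}}_\sigma$ and $F_+\in(\Lambda^+G^{\mathbb{C}}_\sigma)^0$, the uniqueness of the splitting giving $F_-(z_0)=\ee$.

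I would then compute $\eta=F_-^{-1}\dd F_-=\mathrm{Ad}(F_+)\,\alpha_\lambda-\dd F_+\,F_+^{-1}$. Because $F_-=\ee+O(\lambda^{-1})$ the left-hand side carries only powers $\lambda^{k}$ with $k\le -1$, while on the right $\alpha_\lambda$ from \eqref{eq-harmonic-lam} has lowest power $\lambda^{-1}$ and both $F_+$ and $\dd F_+\,F_+^{-1}$ carry only non-negative powers. Matching forces $\eta=\lambda^{-1}\mathrm{Ad}(F_+(z,0))\,\alpha_\mathfrak{p}'$; this is $\lambda^{-1}\mathfrak{p}^{\mathbb{C}}$-valued since $F_+(z,0)\in(K^{\mathbb{C}})^0$ and $\mathrm{Ad}(K^{\mathbb{C}})$ preserves $\mathfrak{p}^{\mathbb{C}}$ by $[\mathfrak{k},\mathfrak{p}]\subset\mathfrak{p}$, and it is a pure $(1,0)$-form because $\alpha_\mathfrak{p}'$ is. Vanishing of the $\dd\bar z$-component gives $\partial_{\bar z}F_-=0$, so $F_-$ is holomorphic in $z$ where defined. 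Finally, failure of big-cell membership is the vanishing of a quantity depending holomorphically on $z$, so its locus $\D_0$ is discrete and $F_-$ extends meromorphically across it; thus $\eta$ is a globally defined $\lambda^{-1}\mathfrak{p}^{\mathbb{C}}$-valued meromorphic $(1,0)$-form with poles only in $\D_0$.

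For the converse (parts (3)--(4)) I would integrate $\dd F_-=F_-\eta$, $F_-(z_0)=\ee$, producing a meromorphic $F_-$, and on $\D_{\mathcal{I}}$ apply the Iwasawa splitting \eqref{Iwa}, $F_-=\tilde F\tilde F_+^{-1}$ with $\tilde F\in\Lambda G_{\sigma}^0$ and $\tilde F_+\in\Lambda_S^+G^{\mathbb{C}}_\sigma$. The crux is to identify $\tilde F$ as an extended frame, i.e.\ to show $\tilde\alpha=\tilde F^{-1}\dd\tilde F=\mathrm{Ad}(\tilde F_+^{-1})\eta+\tilde F_+^{-1}\dd\tilde F_+$ has the shape \eqref{eq-harmonic-lam}. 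The same degree count shows its $(1,0)$-part has powers $\ge -1$ and its $(0,1)$-part powers $\ge 0$. Now reality of $\tilde F\in\Lambda G_{\sigma}^0$ forces the $(0,1)$-part to equal the entrywise conjugate of the $(1,0)$-part under $\lambda\mapsto 1/\bar\lambda$; combined with the two lower bounds this pins the $(1,0)$-part to powers $\{-1,0\}$ and the $(0,1)$-part to $\{0,1\}$. With the $\sigma$-twisting (odd powers in $\mathfrak{p}^{\mathbb{C}}$, even in $\mathfrak{k}^{\mathbb{C}}$) this is exactly $\tilde\alpha=\lambda^{-1}\tilde\alpha_\mathfrak{p}'+\tilde\alpha_\mathfrak{k}+\lambda\tilde\alpha_\mathfrak{p}''$, so by Lemma \ref{lemma-harmonic} $\tilde F(\cdot,1)$ frames a harmonic map. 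That the two passages are mutually inverse then follows from the uniqueness clauses of Theorems \ref{thm-birkhoff-0} and \ref{thm-Iwasawa-0} together with the normalizations $F_-(z_0)=\ee$ and $\tilde F(z_0)=\ee$.

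The main obstacles are two. In the forward direction, promoting the local holomorphic factorization near $z_0$ to a globally meromorphic $F_-$ with \emph{discrete} $\D_0$ is delicate: it relies on the holomorphic $z$-dependence just established and on the big Birkhoff cell being the complement of a thin analytic set, a point where the non-compactness of $G$ and $K$ (so that the Iwasawa map is onto only the open cell $\mathcal{I}^{\mathcal{U}}_e$, not all of the loop group) must be watched. In the converse, the reality-plus-degree bookkeeping pinning $\tilde\alpha$ to degrees $\{-1,0,1\}$ is the technical heart, and it genuinely uses that $\tilde F$ lands in the \emph{real} loop group $\Lambda G_{\sigma}^0$ rather than merely in $\mathcal{I}^{\mathcal{U}}_e$; the remaining computations are routine manipulations with $\mathrm{Ad}$ and the Cartan relations $[\mathfrak{k},\mathfrak{k}]\subset\mathfrak{k}$, $[\mathfrak{k},\mathfrak{p}]\subset\mathfrak{p}$, $[\mathfrak{p},\mathfrak{p}]\subset\mathfrak{k}$.
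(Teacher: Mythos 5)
The paper itself gives no proof of this theorem: it is imported verbatim from \cite{DPW}. So your proposal can only be measured against the standard argument given there, which it largely reconstructs. Your part (1) (connectedness of the image plus $\Lambda G_{\sigma}^0\subset\mathcal{I}^{\mathcal{U}}_e$), the local Birkhoff splitting near $z_0$, the Maurer--Cartan computation $\eta=\mathrm{Ad}(F_+)\alpha_\lambda-\dd F_+\,F_+^{-1}$ with the degree count giving $\eta=\lambda^{-1}\mathrm{Ad}(F_+|_{\lambda=0})\,\alpha_{\mathfrak{p}}'$ (hence $\lambda^{-1}\mathfrak{p}^{\mathbb{C}}$-valued, type $(1,0)$, and $F_-$ holomorphic where defined), and the converse direction --- Iwasawa splitting, then the reality-plus-degree bookkeeping pinning $\tilde\alpha$ to the form \eqref{eq-harmonic-lam} and invoking Lemma \ref{lemma-harmonic} --- are all correct and are exactly the route of \cite{DPW}.

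The genuine gap is in the global claim of part (2): discreteness of $\D_0$ and meromorphy of $F_-$ across $\D_0$. You justify this by asserting that "failure of big-cell membership is the vanishing of a quantity depending holomorphically on $z$". But the extended frame $F$ depends only real-analytically on $(z,\bar z)$, not holomorphically on $z$; the function detecting failure of big-cell membership (the $\tau$-function, i.e.\ the Fredholm determinant of $F(z,\cdot)$ in the Grassmannian model of \cite{PS}, \cite{SW}) is therefore merely real-analytic, and the zero set of a real-analytic function on a planar domain can be a curve rather than a discrete set --- this is precisely the phenomenon the paper warns about for the Iwasawa decomposition in the non-compact setting. The holomorphy you appeal to is that of $F_-$, which you established only on the open set where the Birkhoff splitting already exists; using it to control the complement of that set is circular. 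The repair used in \cite{DPW} is to first pass to a frame that \emph{is} holomorphic in $z$: by Theorem \ref{pot-hol} (proved by a $\bar\partial$-argument in the appendix of \cite{DPW}, independently of the present theorem) there exists $V_+\in\Lambda^{+}G^{\mathbb{C}}_{\sigma}$ with $C=FV_+$ holomorphic in $z$ and $\lambda$. Since $C$ and $F$ differ by a right factor in $\Lambda^{+}G^{\mathbb{C}}_{\sigma}$, they lie in the big cell of Theorem \ref{thm-birkhoff-0} at the same points $z$ and have the same minus-factor, $C_-=F_-$. Now the $\tau$-function of $C$ is holomorphic in $z$ and not identically zero (it is nonzero at $z_0$, where $C(z_0)=V_+(z_0)\in\Lambda^{+}G^{\mathbb{C}}_{\sigma}$), so its zero set $\D_0$ is discrete, and the determinant-line-bundle formulas for the Birkhoff factors show that $F_-=C_-$ is meromorphic with poles only in $\D_0$. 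Without this detour through the holomorphic potential (or an equivalent device, such as Wu's holomorphic-extension argument \cite{Wu}), part (2) --- which is the technical heart of the forward direction --- remains unproved.
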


\begin{definition}{\em(\cite{DPW})}
The $\lambda^{-1}\cdot \mathfrak{p}^{\mathbb{C}}-\hbox{valued}$ meromorphic $(1,0)$
form $\eta$ is called the {\em normalized potential} for the harmonic
map $f$ with the point $z_0$ as the reference point.
\end{definition}

\begin{remark}\
\begin{enumerate}
\item
Note that the normalized potential is uniquely determined once a base point is chosen.
However, if we conjugate a normalized potential by  some $z-$independent element  $k$ of $K$,
then the procedure outlined in the theorem produces a new harmonic map (and correspondingly a new Willmore surface)
which differs from the original one by the rigid motion induced by $k$.
Since we usually do not care about how the harmonic map (or the Willmore surface) sits
in  space, we sometimes use elements of $K$ to simplify or further normalize the normalized potential.
 \item
In the converse procedure, part $(4)$ above, since in our case the symmetric space $G/K$ is not compact,
the Iwasawa splitting \eqref{Iwa} will in general not be possible for all $z \in \D$.
Thus $\tilde{F}$, as well as the harmonic map $\tilde{f}$ will have singularities on $\D$. There are two types of singularities. One type stems from poles in the potential $\eta$ and the other type occurs,
when $F_-$   touches or crosses the boundary of an  open Iwasawa cell (See \cite{B-R-S}).
(There are at least two open Iwasawa cells, as pointed out above).
In the new example of a Willmore sphere in $S^6$  \cite{Wang-iso} (also see Example \ref{thm-example} mentioned in the introduction)  it happens
that the frame of the harmonic map has  singularities, but the Willmore immersion and hence the harmonic map does not have any singularity
 (\cite{Wang-iso}).  In this case the frame is the product of a matrix without singularities with a singular scalar factor. The projection of the frame to the harmonic map cancels out this singular factor. So the appearance of singularities is only due to the choice of frame.

\end{enumerate}\end{remark}

So far we have only introduced the ``normalized potential". However, in many applications it is much more convenient to use potentials which contain, in their Fourier expansion, more than one power of $\lambda$.
The normalized potential is usually meromorphic in $z$. Since it is uniquely determined, there is no way to change this. However, when permitting many (maybe infinitely many) powers of $\lambda$, then one can obtain potentials with holomorphic coefficients,  which will be called {\em holomorphic potentials}. They are not uniquely determined.

\begin{theorem} \label{pot-hol} Let $\D$ be a contractible open subset of $\C$.
Let $F(z,\lambda)$ be the frame of some harmonic map
into $G/K$. Then there exists some $V_+ \in \Lambda^{+} G^{\mathbb{C}}_{\sigma} $ such that $C(z,\lambda) =
F V_+$ is holomorphic in $z$ and in $\lambda \in \mathbb{C}^*$.
Then the Maurer-Cartan form $\eta = C^{-1} dC$ of $C$ is a holomorphic $(1,0)-$form on $\D$ and it is easy to verify that $\lambda \eta$ is holomorphic for $\lambda \in \C$.
Conversely, any harmonic map  $f: \D\rightarrow G/K$ can be derived from
such a holomorphic $(1,0)-$ form $\eta$ on $\D$ by the same steps as in the previous theorem.
\end{theorem}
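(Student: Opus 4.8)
The plan is to manufacture the positive gauge explicitly from the extended frame $F$ and then read off the potential from the gauged Maurer--Cartan form. Write the Maurer--Cartan form of $F$ as $\alpha_\lambda=\alpha_\lambda^{(1,0)}+\alpha_\lambda^{(0,1)}$; by \eqref{eq-harmonic-lam} its $(0,1)$--part is $\alpha_\lambda^{(0,1)}=\alpha_{\mathfrak k}''+\lambda\alpha_{\mathfrak p}''$, which involves only the powers $\lambda^0$ and $\lambda^1$ and is therefore a $(0,1)$--form valued in $\Lambda^{+}\mathfrak g^{\mathbb C}_\sigma$ whose $\lambda$--coefficient is a \emph{polynomial} in $\lambda$. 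First I would look for $V_+\colon\D\to\Lambda^{+} G^{\mathbb{C}}_{\sigma}$ solving
\[
(\bar\partial V_+)\,V_+^{-1}=-\,\alpha_\lambda^{(0,1)},\qquad V_+(z_0,\lambda)=\ee.
\]
On a Riemann surface a $(0,1)$--connection integrates automatically (there are no $(0,2)$--forms), and since $\D$ is contractible the associated holomorphic bundle is trivial, so such a $V_+$ exists globally on $\D$. Two features then need checking: because $\alpha_\lambda^{(0,1)}$ carries only non--negative powers of $\lambda$ and the initial value $\ee$ is constant in $\lambda$, a Picard iteration shows $V_+$ carries only non--negative powers of $\lambda$, so $V_+\in\Lambda^{+} G^{\mathbb{C}}_{\sigma}$; and because that coefficient is polynomial in $\lambda$ with entire initial data, holomorphic dependence on the parameter $\lambda$ shows $V_+$ is in fact holomorphic on all of $\mathbb{C}$, not merely on $|\lambda|<1$.

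Second, I would set $C=FV_+$ and put $\eta=C^{-1}\dd C=\mathrm{Ad}(V_+^{-1})\alpha_\lambda+V_+^{-1}\dd V_+$. By the very choice of $V_+$ the $\dd\bar z$--part of $\eta$ vanishes, so $\eta$ is a $(1,0)$--form. Its coefficient is then annihilated by $\partial_{\bar z}$: indeed the Maurer--Cartan equation $\dd\eta+\tfrac12[\eta\wedge\eta]=0$ reduces, for a $(1,0)$--form, to $\dd\eta=0$ (the bracket term vanishes since $\dd z\wedge\dd z=0$), which is exactly holomorphicity in $z$. For the $\lambda$--behaviour, $F$ is holomorphic in $\lambda\in\mathbb{C}^*$ (it solves an ODE whose coefficients are Laurent polynomials in $\lambda$) and $V_+$ is entire in $\lambda$, so $C$ and $\eta$ are holomorphic in $\lambda\in\mathbb{C}^*$; moreover the only negative power entering $\eta$ is the $\lambda^{-1}$ coming from $\mathrm{Ad}(V_+^{-1})(\lambda^{-1}\alpha_{\mathfrak p}')$, whence $\lambda\eta$ is holomorphic on all of $\mathbb{C}$. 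This yields the direct assertions of the theorem.

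Third, for the converse I would run the procedure of Theorem \ref{DPW}(4) with $\eta$ in place of the normalized potential: integrate $\dd C=C\eta$ with $C(z_0,\lambda)=\ee$ to obtain a frame $C$ holomorphic in $z$ and in $\lambda\in\mathbb{C}^*$ with $\lambda\eta$ entire, then apply the Iwasawa splitting $C=\tilde F\tilde F_+$ on the Iwasawa cell to extract $\tilde F\in\Lambda G_\sigma$, whose projection is harmonic. That every harmonic map arises this way is immediate from the first two paragraphs: starting from a frame $F$ of a given $f$, the holomorphic potential $\eta$ built above integrates back to $C=FV_+$, and since this is already an Iwasawa decomposition, uniqueness of the splitting returns $\tilde F=F$ on the nose; hence the two constructions are mutually inverse on their natural domains.

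The step I expect to be the main obstacle is the construction of $V_+$ in the first paragraph: one must solve the $\bar\partial$--equation \emph{globally} on the contractible domain $\D$ while simultaneously keeping the solution inside the positive loop group $\Lambda^{+} G^{\mathbb{C}}_{\sigma}$ and ensuring genuine holomorphic dependence on $\lambda$ over all of $\mathbb{C}^*$ (so that $C=FV_+$ does not lose holomorphicity outside the unit disk). The positivity and the polynomial $\lambda$--dependence of $\alpha_\lambda^{(0,1)}$ are precisely what make these two requirements compatible; everything after that is bookkeeping with powers of $\lambda$ and the Maurer--Cartan equation.
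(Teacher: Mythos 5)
Your proposal is correct and is essentially the same argument the paper relies on: the paper omits its own proof, stating it "can be taken verbatim from the appendix of \cite{DPW}", and that appendix proceeds exactly as you do --- gauge the extended frame by a solution $V_+$ of the group-valued $\bar\partial$-problem $(\bar\partial V_+)V_+^{-1}=-\alpha_\lambda^{(0,1)}$, taken inside the positive loop group with holomorphic dependence on the parameter $\lambda$, then read off the holomorphic $(1,0)$-potential from the Maurer--Cartan form of $C=FV_+$ and reverse the construction via Iwasawa splitting. The step you single out as the main obstacle (global solvability of the Banach--Lie-group-valued $\bar\partial$-problem while staying in $\Lambda^{+}G^{\mathbb{C}}_{\sigma}$ and keeping entire $\lambda$-dependence) is precisely the technical content supplied by the cited appendix of \cite{DPW}.
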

The proof can be taken verbatim from the appendix of
\cite{DPW} and will  be omitted here.
 We would like to point out that the proof works for all harmonic maps into any Riemannian or pseudo-Riemannian symmetric space (actually, even more generally, for primitive harmonic maps into $k-$symmetric spaces).
In particular, the proof is independent of the results of the previous sections.

\begin{remark}\
\begin{enumerate}
 \item
Again, since the Iwasawa splitting is not global in our case, even when starting with a holomorphic potential, the resulting harmonic map will generally have singularities.

\item
Let $\eta_1$ and $\eta_2$ be any two potentials producing the same harmonic map by the procedure outlined above. Then there exists a gauge
$W_+:\D \rightarrow\Lambda^+G^{\C}_{\sigma}$ transforming one potential into the other. For a proof consider the frames $F_1 = C_1 V_{+1}$ and
$F_2 = C_2 V_{+2}$ constructed as outlined above. Since we assume that the two potentials induce the same harmonic map, these frames only differ by some gauge: $F_1 = F_2 T$ where $T \in K$. This implies
$C_1 V_{+1} = C_2 V_{+2} T$. Thus $W_+ = V_{+2} T V_{+1}^{-1}$ is the desired gauge.
\end{enumerate}\end{remark}
\vspace{1mm}

So far we have only discussed potentials for harmonic maps defined on some
contractible open subset of $\C$. Let now $M$ denote a Riemann surface which is either non-compact or compact of positive genus.
Then the universal cover $\tilde{M}$ of $M$ can be realized as a contractible open subset of $\C$. Moreover, if $f:M \rightarrow G/K$ is a harmonic map, then the composition $\tilde{f}$ of $f$ with the canonical projection from $\tilde{M}$ onto $M$ is also harmonic. Therefore to $\tilde{f}$ we can construct normalized potentials and holomorphic potentials as outlined above. These potentials for $\tilde{f}$
will also be called potentials for $f$.
The converse procedure as outlined in the last two theorems produces harmonic maps defined on some open subsets (containing the base point) of $\D$.
For these harmonic maps to descend to $M$ ``closing conditions" need to be satisfied.

\begin{remark}\
\begin{enumerate}
 \item If $M = S^2$, then it is not clear a priori that the procedure discussed in Theorem \ref{DPW} works as well. However, if the  symmetric target space actually is a real Lie group $G,$ considered as a symmetric space $G \cong (G \times G)/\Delta$, where $\Delta$ denotes the subgroup
$\Delta = \lbrace (g,g) \in G \times G, g \in G \rbrace$ and one uses the natural projection $(g,h) \rightarrow  gh^{-1},$ then the same procedure works.
In this case one can  lift a harmonic map $f: S^2 \rightarrow G$  to $G \times G$
by $F = (f,e)$. This way one obtains, as in the previous cases, a normalized potential. It has the form
$\xi = (\lambda^{-1} \eta , -\lambda^{-1} \eta)$. Harmonic maps into Lie groups (as symmetric spaces) have been discussed in \cite{Do-Es}, Section 9
. Note, however, that the formula given in \cite{Do-Es} for the normalized potential shows a wrong $\lambda-$ dependence.

 \item On the other hand, one does not obtain a holomorphic potential for $M=S^2$, since $S^2$ does not carry any non-trivial holomorphic $(1,0)-$forms. The proof of \cite{DPW} which was  used in the proof of the theorem above  is not applicable to the case $M = S^2$, of course.
\end{enumerate}\end{remark}
Let's consider now the case that we have a harmonic map $f$ from $M = S^2$
into some general symmetric space $G/K$. Since $\pi_2 (S^2) = \mathbb{Z}$ and  $\pi_2 (G) = 0$, it will generally not be possible to lift the smooth map $f$ to a smooth map $F$  from $S^2$ to $G$ such that the map $F$ composed with the natural projection from $G$ to $G/K$ is $f$. But one can find some way around this non-lifting obstacle.
\begin{theorem}\label{th-potential-sphere}
Every harmonic map from $S^2$ to any  Riemannian or pseudo-Riemannian symmetric space $G/K$ admits an extended frame with at most two singularities and it admits  a global meromorphic extended frame. In particular, every harmonic
 map from $S^2$ to any  Riemannian or pseudo-Riemannian symmetric space $G/K$
can be obtained from some meromorphic normalized potential.
\end{theorem}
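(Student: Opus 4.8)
The plan is to treat the two assertions of the theorem in turn: first, the existence of an extended frame with at most two singularities, and second, the construction of a global meromorphic extended frame from which the meromorphic normalized potential is read off. The obstruction to a naive approach is topological: since $\pi_2(S^2)=\mathbb Z$ while $\pi_2(G)=0$, a harmonic map $f\colon S^2\to G/K$ need not lift to a smooth $F\colon S^2\to G$. My strategy is to lift $f$ not on all of $S^2$ but on the complement of a small disk, where the topological obstruction disappears, and then to analyze the monodromy/transition data on the overlap.

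First I would cover $S^2$ by two contractible charts, say $\D_0$ a small disk around the south pole and $\D_\infty = S^2\setminus\{\text{north pole}\}$ (conformally $\C$), whose union is all of $S^2$ and whose intersection is an annulus $\mathcal A$. On each chart Theorem~\ref{DPW} applies and yields an extended frame $F_0\colon \D_0\to\Lambda G_\sigma$ and $F_\infty\colon\D_\infty\to\Lambda G_\sigma$, each projecting to $f_\lambda$. On the overlap $\mathcal A$ the two frames differ by a $\lambda$-dependent gauge into $K$, i.e. $F_0 = F_\infty\cdot T$ with $T\colon\mathcal A\to \Lambda K_\sigma$ (more precisely into the relevant $\lambda$-twisted loop group of $K$). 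The key point is that the $\lambda$-independence of the underlying map $f$ forces $T$ to be, up to a genuinely $K$-valued part, controlled by the loop-group structure; one shows $T$ extends over the disk except possibly at the two puncture points (north and south poles), which accounts for the ``at most two singularities.'' Concretely, I expect to glue $F_\infty$ (defined away from the north pole) with a correction that absorbs the transition, so that the resulting extended frame is defined and smooth on $S^2$ minus at most the two centers of the charts.

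To upgrade ``smooth with two singularities'' to ``global meromorphic,'' I would invoke the holomorphicity built into the DPW construction: by Theorem~\ref{pot-hol}, on each chart the frame can be gauged by some $V_+\in\Lambda^+G^\C_\sigma$ to a map $C=FV_+$ holomorphic in $z$ and in $\lambda\in\C^*$. The Birkhoff decomposition (Theorem~\ref{thm-birkhoff-0}) then splits $C$, on a dense open set, into a product $F_-F_+$ with $F_-$ meromorphic in $z$ taking values in $\Lambda^-_*G^\C_\sigma$; its Maurer--Cartan form is the $\lambda^{-1}\mathfrak p^\C$-valued normalized potential. The task is to check that the chartwise $F_-$'s assemble into a single meromorphic object on $S^2$: on the overlap annulus the two $F_-$'s differ by a transition that, after the Birkhoff normalization, is forced to be meromorphic, and the only poles introduced are at the puncture points and on the Birkhoff non-big-cell locus $\D_0$, which is a discrete (hence finite, on the compact $S^2$) set. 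Since a meromorphic frame on $S^2$ with a discrete pole set yields a meromorphic normalized potential by differentiation, the second conclusion follows.

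The main obstacle I anticipate is precisely the gluing/monodromy analysis on the overlap: one must verify that the transition gauge $T$ between the two chartwise extended frames can be chosen meromorphic (rather than merely smooth) and that its singular set is confined to the two chart centers, so that the patched frame is genuinely single-valued and meromorphic on $S^2$. This is where the $\lambda$-twisting and the loop-group structure (in particular the $\lambda$-dependence that distinguishes the extended frame from the $\lambda=1$ frame) do the essential work in defeating the $\pi_2$-obstruction; the holomorphic-potential result of Theorem~\ref{pot-hol}, valid for arbitrary (pseudo-)Riemannian symmetric targets and independent of compactness, is what allows the argument to run uniformly for every $G/K$ without a separate Iwasawa analysis.
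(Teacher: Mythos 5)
Your overall architecture --- cover $S^2$ by two charts, take chartwise DPW frames, Birkhoff-split each, and glue the $\Lambda^-_*$-factors into a global meromorphic frame --- is exactly the strategy of the paper's proof. But there is a genuine gap at the one step that makes the gluing work. To assemble the chartwise meromorphic factors into a single-valued object on $S^2$ you need them to be \emph{equal} on the overlap; your proposal only claims that they "differ by a transition that ... is forced to be meromorphic", which has no content (any two meromorphic loop-group-valued maps differ by a meromorphic transition) and does not produce a well-defined global $F_-$. The paper gets equality as follows: normalize \emph{both} smooth frames at a \emph{common} base point $p_0$ in the overlap, $F_1(p_0)=F_2(p_0)=e$. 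Then the transition $\mathcal{K}=F_1^{-1}F_2$ is $K$-valued and, crucially, \emph{independent of} $\lambda$: gauging by a $\lambda$-independent $K$-valued map commutes with the substitution $\alpha\mapsto\alpha_\lambda$, since conjugation by $K$ preserves both the splitting $\mathfrak{g}=\mathfrak{k}\oplus\mathfrak{p}$ and the $(1,0)/(0,1)$ decomposition of $\alpha_{\mathfrak{p}}$, so the relation $F_2=F_1\mathcal{K}$ persists for the extended frames. A $\lambda$-independent element of $K$ lies in $\Lambda^{+}_{\mathcal{C}} G^{\mathbb{C}}_{\sigma}$, so from $F_{2-}F_{2+}=F_{1-}\bigl(F_{1+}\mathcal{K}\bigr)$ and the uniqueness of the big-cell factorization in Theorem \ref{thm-birkhoff-0}(2), with the normalization $F_{j-}=I+\mathcal{O}(\lambda^{-1})$, one concludes $F_{1-}=F_{2-}$ on the overlap (indeed $F_{1-}^{-1}F_{2-}\in \Lambda^-_*G^\C_\sigma\cap\Lambda^+G^\C_\sigma=\{e\}$). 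Your version blocks precisely this argument: you posit a genuinely $\lambda$-dependent transition $T:\mathcal{A}\to\Lambda K_\sigma$, and a genuinely $\lambda$-dependent loop in $K$ does \emph{not} extend holomorphically to the unit disk, hence cannot be absorbed into the $\Lambda^+$-factors; the minus-parts would then genuinely differ. Moreover, without a common base point the two extended frames in fact differ by a $\lambda$-dependent \emph{left} factor $C(\lambda)$ in addition to a right $K$-valued gauge, and then $F_{1-}\neq F_{2-}$ in general. The sentence about $T$ being "controlled by the loop-group structure" gestures at the needed $\lambda$-independence but never establishes it; establishing it (via the common normalization) is the proof.

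Two smaller points. First, your chart choice is inconsistent: a small disk $\D_0$ about the south pole is \emph{contained} in $S^2\setminus\{\hbox{north pole}\}$, so the union is not $S^2$ and the intersection is not an annulus. The paper takes $\mathcal{U}_1=S^2\setminus\{N\}$ and $\mathcal{U}_2=S^2\setminus\{S\}$, both conformally $\C$, with overlap $\C^*$. Second, the "at most two singularities" is not obtained by extending $T$ across punctures; once $F_{1-}=F_{2-}=:F_-$ is globally meromorphic one sets $\eta=F_-^{-1}\dd F_-$ and reconstructs $f$ by Iwasawa splitting, and the two admissible singularities are simply the two removed points $N,S$, where neither chartwise frame is available. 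Relatedly, "discrete hence finite" is backwards: on the non-compact charts the bad sets are only discrete, and it is the global meromorphicity of $F_-$ on the compact $S^2$ that yields finiteness of the poles of $\eta$.
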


\begin{proof}Let $f:S^2\rightarrow G/K$ be a harmonic map. Set $$\mathcal{U}_1=S^2\backslash \{\hbox{ north pole }\},
\ ~\mathcal{U}_2=S^2\backslash \{\hbox{ south pole }\}$$
and $f_1=f|_{\mathcal{U}_1}$, $f_2=f|_{\mathcal{U}_2}$. Since $\mathcal{U}_1\cong\mathcal{U}_2\cong\mathbb{C}$, there exist frame lifts $F_j:\mathcal{U}_j\rightarrow G$ of $f_j$, $j=1,2$, by \cite{DPW}.

We can assume w.l.g. $F_1(p_0)=F_2(p_0)=e$ where $p_0$ is a fixed base point in $\mathcal{U}_1\cap\mathcal{U}_2$ and $f(p_0)=e \mod K$.
Also we have $F_2=F_1\mathcal{K}$ on $\mathcal{U}_1\cap\mathcal{U}_2$. Introducing $\lambda$ yields ($\sigma-$twisted) $F_1$ and $F_2$ and again
$F_2=F_1\mathcal{K}$, where $F_j=F_j(z,\bar{z},\lambda)$ and $\mathcal{K}=\mathcal{K}(z,\bar{z})$. By \cite{DPW}, there exist discrete subsets $D_j\subset \mathcal{U}_j$, $j=1,2$ such that
$$F_j=F_{j-}F_{j+}, \ ~~j=1,2$$
on $\mathcal{U}_j\backslash D_j$. Moreover, $F_{j-}$ extends to a meromorphic map on $\mathcal{U}_j$ by \cite{DPW}.

On $(\mathcal{U}_1\cap\mathcal{U}_2)\backslash (D_1\cup D_2)$ we have
$F_{2-}V_{2+}=F_{1-}V_{1+}\mathcal{K}$, where $F_{j-}=I+\mathcal{O}(\lambda^{-1})$.
Hence
$$F_{2-}=F_{1-} \ ~\hbox{ on }\ ~ (\mathcal{U}_1\cap\mathcal{U}_2)\backslash (D_1\cup D_2).$$
Therefore this meromorphic map on $\mathcal{U}_1\cap\mathcal{U}_2$ extends meromorphically to $S^2$.
Set $\eta=F_-^{-1}\dd F_-.$
Then $\eta$ is a meromorphic $(1,0)-$form on $S^2$ of the form $\eta=\lambda^{-1}\eta_{-1}\dd z.$   As usual, $\eta$ will be called ``normalized potential'' of $f$. Moreover, by reversing the steps above we see that the map $f$ can be obtained from $\eta$ as usual, however,  we need to admit (up to two) singularities for the  extended  frame defined by $\eta$. Thus $\eta$ is justifiedly called the  normalized potential of $f$.
\end{proof}

\begin{remark}\
 \begin{enumerate}
  \item By removing just one point of $S^2$, like the north pole, one obtains a meromorphic map on $\mathcal{U}_1$ which, however, could have an essential singularity at the north pole. The use of $\mathcal{U}_1$ and $\mathcal{U}_2$ as above shows that $F_{1-}=F_{2-}$ on $\mathcal{U}_1\cap\mathcal{U}_2$ actually extends as a meromorphic frame to all points of $S^2$.
 \item From the proof above it is clear that the original harmonic map $f$ can be  reconstructed  from $\eta$ by the usual steps  (see Theorem \ref{DPW}). Note that the two procedures just discussed are inverse to each other.

 \item The theorem just proven can be used to construct all harmonic maps from $S^2$ into any Riemannian or pseudo-Riemannian symmetric space:

\emph{ Consider a meromorphic $(1,0)-$form on $S^2$ of the form $\eta=\lambda^{-1}\eta_{-1}\dd z $ which has a meromorphic solution $F_-$ on $S^2$ to the ode \[ F_- \eta=\dd F_-, F_-(p_0,\lambda) = e.\]}

Now an Iwasawa decomposition of $F_-$ makes sense for all points, where $F_-$ is in the open Iwasawa cell containing $e$, producing a ``frame" $F$ which is an actual frame on the set of non-singular points of $F_-$. Let $\tau$ denote the anti-holomorphic involution of $G^{\C}$  defining $G$.
Since $F$ is obtained via a Birkhoff decomposition of $\tau(F_{-})^{-1}F_-$ in the form
$\tau(F_{-})^{-1}F_-=\tau(V_{+})V_+^{-1}$, we obtain  $F=F_-V_+=\tau(F_-V_+)$, and its matrix entries are rational functions in the entries of $\tau(F_{-})^{-1}F_-$. In particular, the matrix entries of $F$ are rational functions in $u,v$, $z=u+iv$. Now a harmonic map is obtained by $f = F \mod K$.

 \item Since for pseudo-Riemannian spaces the Iwasawa splitting is not global in general, not every $\eta$ as above will yield a singularity free harmonic map on all of $S^2$.
The domain of definition of $f$ will need to be discussed separately in each case.
\end{enumerate}
\end{remark}

\begin{corollary} Let $f: S^2\rightarrow G/K$ be a harmonic map and $\eta$  its normalized potential with reference point $z_0$. Then away from the (finitely many) poles of $\eta$ there exists an extended frame $F$ for $f$ and a global Iwasawa splitting $F=F_-F_+$, $F_-^{-1}\frac{\dd}{\dd z}F_-=\eta$.
Moreover, $F_-$ is meromorphic on $S^2$.
\end{corollary}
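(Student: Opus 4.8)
The plan is to extract the corollary directly from the proof of Theorem \ref{th-potential-sphere}, upgrading the two chartwise DPW splittings into one global statement. First I would recall from that theorem the globally defined meromorphic frame $F_-$ on $S^2$ with $F_-(z_0,\lambda)=\ee$ and $F_-^{-1}\dd F_-=\eta$, where $\eta=\lambda^{-1}\eta_{-1}\dd z$. Since $S^2$ is compact and $F_-$ is meromorphic, its pole set $P$ is a \emph{finite} subset of $S^2$, and because $\eta=F_-^{-1}\dd F_-$ these are exactly the poles of $\eta$. This already yields the final sentence of the corollary and fixes the finite set away from which we work.

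For the global Iwasawa splitting I would return to the charts $\mathcal{U}_1=S^2\setminus\{N\}$ and $\mathcal{U}_2=S^2\setminus\{S\}$ of that proof. On each $\mathcal{U}_j\cong\C$ the map $f|_{\mathcal{U}_j}$ is harmonic on a contractible domain, so by Theorem \ref{DPW} it has a genuine extended frame $F_j:\mathcal{U}_j\to\Lambda G_{\sigma}$, and $F_j=F_{j-}F_{j+}$ on $\mathcal{U}_j\setminus D_j$ with $F_{j-}=F_-$ and $D_j=P\cap\mathcal{U}_j$. Rewriting this as $F_-=F_j\,F_{j+}^{-1}$ displays $F_-$ as a product of $F_j\in\Lambda G_{\sigma}$ with $F_{j+}^{-1}\in(\Lambda^{+} G^{\C}_{\sigma})^0$, i.e. $F_-$ lies in the open Iwasawa cell $\mathcal{I}^{\mathcal{U}}_e$ at every point of $\mathcal{U}_j\setminus D_j=\mathcal{U}_j\setminus P$. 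As $\mathcal{U}_1\cup\mathcal{U}_2=S^2$, these two pieces cover $S^2\setminus P$, so $F_-(z,\lambda)\in\mathcal{I}^{\mathcal{U}}_e$ for \emph{every} $z\in S^2\setminus P$. Consequently the splitting $F_-=F\,F_+^{-1}$ of the DPW recipe (part (4) of Theorem \ref{DPW}) is available on all of $S^2\setminus P$, giving the extended frame $F=F_-F_+$ and the asserted global Iwasawa splitting.

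The delicate point — and the step I would argue most carefully — is that for the pseudo-Riemannian target $G/K$ the Iwasawa splitting is a priori valid only on some open subset of $(\Lambda G^{\C}_{\sigma})^0$ (there are at least two open cells, and $\Lambda G_{\sigma}^0$ does not exhaust the connected loop group), so globality is not formal from meromorphy of $F_-$ alone. What forces it here is that $f$ is an honest harmonic map on all of $S^2$: the frames $F_1,F_2$ exist precisely because $f$ is smooth on the charts, and it is their existence that pushes $F_-$ into $\mathcal{I}^{\mathcal{U}}_e$ off $P$. Finally I would check that the resulting $F$ is an extended frame \emph{for} $f$: using the refined factorization with $\Lambda^{+}_S G^{\C}_{\sigma}$-factor, which is a diffeomorphism and hence gives a unique splitting, $F$ is single-valued on $S^2\setminus P$; on each chart it differs from $F_j$ only by a $z$-dependent gauge in $K$, so $F\bmod K=f$ there, and the two chart values agree modulo the $K$-valued transition $\mathcal{K}$, leaving no inconsistency. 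This identifies $F=F_-F_+$ as the desired global extended frame.
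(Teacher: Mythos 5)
Your proposal is correct and is essentially the paper's own argument: the corollary is stated without a separate proof precisely because, as you do, one reads it off from the proof of Theorem \ref{th-potential-sphere} --- the glued global meromorphic $F_-$ (whose pole set is finite by compactness of $S^2$ and coincides with that of $\eta$), the chartwise DPW splittings $F_j=F_{j-}F_{j+}$ on $\mathcal{U}_j\setminus D_j$ which force $F_-=F_jF_{j+}^{-1}\in\mathcal{I}^{\mathcal{U}}_e$ away from the poles, and then the splitting mechanism of Theorem \ref{DPW}, part (4), producing the extended frame $F=F_-F_+$. Your additional checks (identifying the poles of $F_-$ with those of $\eta$, single-valuedness of $F$, and $F\bmod K=f$ via the $K$-valued gauge relating $F$ to the chart frames) only make explicit what the paper leaves implicit, so there is no substantive divergence.
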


Clearly, the normalized potential just discussed lives on $S^2 = M$.
 For general surfaces $M$ the potentials for harmonic maps only live on the universal cover $\tilde{M}$ of $M$. Therefore, if one wants to construct harmonic maps from some arbitrary Riemann surface $M$ into some  symmetric space $G/K$, one would have at least some indication for where to find an appropriate potential, if one would know that for every harmonic map from $M$ to  $G/K$ there is some potential  on $\tilde{M}$ which is the pullback of some differential one-form defined on $M$.
So far there is known \cite{Do-Ha5}, Theorem 3.2

\begin{theorem}
If $M$ is non-compact, then for every harmonic map from $M$ to any
 Riemannian or pseudo-Riemannian
symmetric space there exists a holomorphic potential defined  on the universal cover
$\tilde{M}$ of $M$ which is invariant under the fundamental group of $M$.
\end{theorem}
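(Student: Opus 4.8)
The plan is to produce the invariant holomorphic potential by trivializing the monodromy of the extended frame holomorphically over the Stein surface $\tilde M$. First I would lift $f$ to a harmonic map $\tilde f=f\circ\pi:\tilde M\rightarrow G/K$ along the covering $\pi:\tilde M\rightarrow M$, and note that since $M$ is non-compact, $\tilde M$ is a contractible open subset of $\C$ (namely $\C$ or $\E$), so an extended frame $F:\tilde M\rightarrow\Lambda G_{\sigma}$ exists with $F(z_0,\lambda)=\ee$. Because $\tilde f=f\circ\pi$, for every deck transformation $\gamma\in\pi_1(M)$ the frames $F\circ\gamma$ and $F$ frame the same harmonic map, hence differ by a $z$-independent loop: $F(\gamma z,\lambda)=\rho(\gamma,\lambda)F(z,\lambda)$ for a monodromy representation $\rho:\pi_1(M)\rightarrow\Lambda G_{\sigma}$. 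In particular the Maurer--Cartan form $\alpha_{\lambda}=F^{-1}\dd F$ is $\pi_1(M)$-invariant and descends to $M$.

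Next I would gauge $F$ into a holomorphic frame as in Theorem \ref{pot-hol}, but equivariantly. Concretely, I seek a map $H:\tilde M\rightarrow\Lambda G^\C_{\sigma}$, holomorphic in $z$ and in $\lambda\in\C^{*}$, such that (i) $V_+:=F^{-1}H$ takes values in $\Lambda^{+}G^\C_{\sigma}$, and (ii) $H$ is $\rho$-equivariant, $H(\gamma z,\lambda)=\rho(\gamma,\lambda)H(z,\lambda)$. Condition (i) guarantees, by uniqueness of the Iwasawa splitting, that $H$ reconstructs the same $f$ and that $\eta:=H^{-1}\dd H$ is a holomorphic $(1,0)$-form with $\lambda\eta$ holomorphic on $\C$ (as in Theorem \ref{pot-hol}); by the gauge remark preceding this subsection it is a genuine holomorphic potential for $f$. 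Condition (ii) forces invariance: writing $\tilde\rho(\gamma):=H(\gamma z)H(z)^{-1}$ one computes $\gamma^{*}\eta=\eta+H^{-1}\big(\tilde\rho^{-1}\dd\tilde\rho\big)H$, so $\gamma^{*}\eta=\eta$ precisely when $\tilde\rho$ is constant in $z$, which is exactly (ii).

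The heart of the argument, and the step I expect to be the main obstacle, is realizing (i) and (ii) simultaneously. Holomorphicity of $H$ together with (i) amounts to solving the linear $\bar\partial$-problem $\bar\partial V_+=-\alpha_{\lambda}''V_+$, where $\alpha_{\lambda}''$ is the $(0,1)$-part of $\alpha_{\lambda}$, for $V_+\in\Lambda^{+}G^\C_{\sigma}$; this is solvable on the Stein surface $\tilde M$. The equivariance (ii) then requires choosing the solution so that the new monodromy $\tilde\rho$ becomes $z$-independent. I would encode $\rho$ as a flat, hence holomorphic, principal $\Lambda G^\C_{\sigma}$-bundle over $M$ and invoke the Grauert--Oka principle in its Banach-Lie-group form for loop groups: since $M$ is a non-compact, hence Stein, Riemann surface and the relevant component of $\Lambda G^\C_{\sigma}$ is connected, this bundle is holomorphically trivial, and a holomorphic trivialization yields an equivariant holomorphic $H$. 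A further $\Lambda^{+}$-valued, $\pi_1(M)$-invariant $\bar\partial$-correction over $\tilde M$ arranges (i) without destroying (ii).

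The delicate point throughout is the infinite dimensionality of the loop group: one cannot simply cite the classical finite-dimensional Grauert theorem but must use its Banach-holomorphic analogue, and one must check that the correction keeping $V_+$ inside $\Lambda^{+}G^\C_{\sigma}$ can be carried out compatibly with the $\pi_1(M)$-action. Granting these technical inputs, $\eta=H^{-1}\dd H$ is the desired $\pi_1(M)$-invariant holomorphic potential defined on $\tilde M$, and Theorem \ref{DPW} recovers $f$ from $\eta$, completing the argument.
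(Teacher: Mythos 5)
The paper does not actually prove this statement: it is quoted as known, with the proof residing in \cite{Do-Ha5}, Theorem 3.2. So your proposal can only be measured against that reference, and in outline it reconstructs exactly its argument: pass to the universal cover, extract a monodromy representation of the extended frame, gauge to a holomorphic frame via a $\bar\partial$-problem, and invoke the Oka--Grauert principle in its Banach--Lie (Bungart) form over the non-compact, hence Stein, surface $M$. The architecture is the right one, but two of your steps contain genuine gaps.

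First, the relation $F(\gamma z,\lambda)=\rho(\gamma,\lambda)F(z,\lambda)$ does not follow from ``$F\circ\gamma$ and $F$ frame the same harmonic map'': two extended frames of the same harmonic map differ by a constant loop on the left \emph{and} a $K$-valued gauge on the right, so a priori $F(\gamma z,\lambda)=\rho(\gamma,\lambda)F(z,\lambda)k_\gamma(z)$, and then $\rho$ need not even be a homomorphism and $\alpha_\lambda$ need not descend to $M$. To get your clean relation you must first choose the frame at $\lambda=1$ equivariantly; this uses that the pullback bundle $f^{*}G\rightarrow M$ is trivial because $M$ is homotopy equivalent to a one-complex and $K$ is connected (for disconnected $K$, allowed by the theorem, one must carry $k_\gamma$ along). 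Second, and more seriously, your final step is not a $\bar\partial$-problem: once $H$ is holomorphic and $\rho$-equivariant, any modification preserving both properties is right multiplication by a $\pi_1(M)$-invariant \emph{holomorphic} map, so there is no $\bar\partial$-freedom left with which to force $F^{-1}H\in\Lambda^{+}G^{\mathbb{C}}_{\sigma}$; that condition is a cocycle-splitting (Riemann--Hilbert) problem, not an elliptic one, and as stated this step would fail. The repair is to reverse your two steps. First solve the $\bar\partial$-problem of Theorem \ref{pot-hol} to obtain a holomorphic $C=FV_+$ with $V_+\in\Lambda^{+}G^{\mathbb{C}}_{\sigma}$; its monodromy defect $P_\gamma(z):=C(z)^{-1}\rho(\gamma,\lambda)^{-1}C(\gamma z)$, equal to $V_+(z)^{-1}V_+(\gamma z)$ after the equivariant choice of frame, is a cocycle of holomorphic maps with values in the connected Banach Lie group $\Lambda^{+}_{\mathcal{C}}G^{\mathbb{C}}_{\sigma}$. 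The associated holomorphic principal bundle over $M$ is topologically trivial (connected structure group over a homotopy one-complex), hence holomorphically trivial by Grauert--Bungart; writing $P_\gamma(z)=Q(z)Q(\gamma z)^{-1}$ with $Q:\tilde{M}\rightarrow\Lambda^{+}_{\mathcal{C}}G^{\mathbb{C}}_{\sigma}$ holomorphic, the frame $\hat{C}=CQ$ is holomorphic, still of the form $F\cdot\Lambda^{+}$, and has $z$-independent monodromy, so $\eta=\hat{C}^{-1}\dd\hat{C}$ is the desired invariant holomorphic potential. This single application of the Oka principle is the essential one; trivializing the flat $\Lambda G^{\mathbb{C}}_{\sigma}$-bundle first, as you propose, produces an $H$ with no controlled relation to $F$ and leaves you facing the same splitting problem anyway.
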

 \begin{remark}\
\begin{enumerate}
 \item
 By abuse of notation we sometimes say in the situation described above that the potential is defined on $M$.

\item For the case of compact surfaces $M$ we conjecture

\vspace{1.5mm}
{ \it Every harmonic map from any compact Riemann surface $M$ to any pseudo-Riemannian symmetric space
can be obtained from some meromorphic potential defined on $M$.}\vspace{1.5mm}\\
 In \cite{DoWa-sym1} we will prove this conjecture for all compact Riemann surfaces and for the  pseudo-Riemannian symmetric space occurring in our Willmore setting.
\end{enumerate}
\end{remark}

\subsection{The normalized potential for strongly conformally harmonic maps
and Wu's formula}

From the definition of the normalized potential (see Theorem \ref{DPW}) we can read off that
it is obtained from the $\lambda^{-1}-$part of the Maurer-Cartan form
of $F$ by conjugation by some matrix function with values in $K^\mathbb{C}$.
For known examples one can write down the normalized potential much more specifically.
In \cite{Wu}, Wu showed how one can determine locally the normalized potential
from the Maurer-Cartan form of the harmonic map $f$.

In this subsection we will make this explicit for the case of primary interest to this paper. As an immediate consequence of Theorem \ref{DPW} we obtain:

\begin{theorem} \label{normalized-potential-W} Let $\D$ be a contractible open subset of $\C$ and $0 \in \D$ a base point.

 Let $f: \D\rightarrow SO^+(1,n+3)/SO^+(1,3)\times SO(n)$ be a strongly
conformally harmonic map with $f(0)=eK$ and
$F:\D \rightarrow (\Lambda G_{\sigma})^0$ an extended frame of $f$
such that $F(0,\lambda) = I$. Then the normalized potential of $f$ with respect to the base point $z = 0$ is of the form
\begin{equation}\label{eq-potential-W}
\eta= \lambda^{-1}\eta_{-1}\dd z,\  \hbox{ with }\ \eta_{-1}=\left(
    \begin{array}{cc}
      0 & \hat{B}_1 \\
      -\hat{B}_1^tI_{1,3} & 0 \\
    \end{array}
  \right)\dd z,\hspace{3mm} \mbox{with} \hspace{3mm} \hat{B_1}^tI_{1,3}\hat{B}_1=0,
\end{equation}
where $\hat{B}_1\dd z$ is a meromorphic $(1,0)-$ form on $\D$ and $0$ is not a pole of $\hat{B}_1$.

Conversely,  any  normalized potential defined on $\D$ and satisfying  (\ref{eq-potential-W}) induces a strongly conformally harmonic
map from an open subset $0 \in \D_\mathcal{I} \subset \D$ into $SO^+(1,n+3)/SO^+(1,3)\times SO(n)$.
\end{theorem}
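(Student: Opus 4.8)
The statement is advertised as an immediate consequence of Theorem~\ref{DPW}, so the plan is to read off the explicit shape of $\eta$ from the Birkhoff splitting underlying that theorem and then to track the single algebraic condition $B_1^tI_{1,3}B_1=0$ through the splitting. First I would recall how the normalized potential is computed (Wu's formula). Write the Birkhoff factorization $F=F_-F_+$ with $F_-(0,\lambda)=I$, $F_-\in\Lambda_*^-G^{\mathbb{C}}_\sigma$ and $F_+\in(\Lambda^+G^{\mathbb{C}}_\sigma)^0$, and set $F_0:=F_+(z,0)\in(K^{\mathbb{C}})^0$. A direct manipulation gives $\eta=F_-^{-1}\dd F_-=\mathrm{Ad}(F_+)\alpha_\lambda-(\dd F_+)F_+^{-1}$, where $\alpha_\lambda=\lambda^{-1}\alpha_{\mathfrak{p}}'+\alpha_{\mathfrak{k}}+\lambda\alpha_{\mathfrak{p}}''$ is the Maurer--Cartan form of the extended frame. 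Since $F_+$ involves only nonnegative powers of $\lambda$ and, by Theorem~\ref{DPW}, $\eta$ is purely $\lambda^{-1}\cdot\mathfrak{p}^{\mathbb{C}}$-valued, comparison of the $\lambda^{-1}$-coefficients yields
\begin{equation*}
\eta=\lambda^{-1}\,\mathrm{Ad}(F_0)\alpha_{\mathfrak{p}}'.
\end{equation*}

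Now $(K^{\mathbb{C}})^0=S(O(1,3,\mathbb{C})\times O(n,\mathbb{C}))^0$ consists of block-diagonal matrices $F_0=\mathrm{diag}(g_1,g_2)$ with $g_1^tI_{1,3}g_1=I_{1,3}$ and $g_2^tg_2=I_n$, and conjugation by such an element preserves $\mathfrak{p}^{\mathbb{C}}$; this already forces the off-diagonal form in \eqref{eq-potential-W}, with $\hat B_1=g_1B_1g_2^{-1}$ (up to the ambient $\dd z$). The heart of the matter is the isotropy condition. Because $f$ is strongly conformally harmonic, the component $\alpha_{\mathfrak{p}}'$ --- which is exactly the $(1,0)$ $\mathfrak{p}$-part of the Maurer--Cartan form of $F(\cdot,1)$ --- satisfies $B_1^tI_{1,3}B_1=0$; I would note that this condition is invariant under the residual $K$-gauge relating the frames of $f$, by the very computation that follows. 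Using $g_1^tI_{1,3}=I_{1,3}g_1^{-1}$ and $g_2^{-t}=g_2$ one checks that the lower block of $\mathrm{Ad}(F_0)\alpha_{\mathfrak{p}}'$ is indeed $-\hat B_1^tI_{1,3}$, and moreover
\begin{equation*}
\hat B_1^tI_{1,3}\hat B_1=g_2\,(B_1^tI_{1,3}B_1)\,g_2^{-1}=0.
\end{equation*}
Meromorphy of $\hat B_1\dd z$ on $\D$ and the fact that $0$ is not a pole are inherited from the corresponding properties of $F_-$ in Theorem~\ref{DPW} together with $F_-(0,\lambda)=I$.

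For the converse I would run Theorem~\ref{DPW}(4): given $\eta=\lambda^{-1}\eta_{-1}\dd z$ of the stated form, solve $F_-^{-1}\dd F_-=\eta$ with $F_-(0)=I$, apply the Iwasawa splitting $F_-=\tilde F\,\tilde F_+^{-1}$ on the cell $\D_{\mathcal{I}}\ni0$, and obtain the extended frame $\tilde F=F_-\tilde F_+$ of a harmonic map $\tilde f=\tilde F\bmod K$ into $SO^+(1,n+3)/SO^+(1,3)\times SO(n)$. It remains to check that $\tilde f$ is strongly conformally harmonic. Reversing the identity above, with $\tilde F_{+,0}:=\tilde F_+(z,0)\in(K^{\mathbb{C}})^0$, gives $\alpha_{\mathfrak{p}}'=\mathrm{Ad}(\tilde F_{+,0}^{-1})\eta_{-1}\,\dd z$, so the $B_1$ of $\tilde f$ is obtained from $\hat B_1$ by block-diagonal conjugation; the same one-line computation then gives $B_1^tI_{1,3}B_1=0$, and hence strong conformal harmonicity on $\D_{\mathcal{I}}$.

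Finally, a word on where the work lies. Since Theorem~\ref{DPW} supplies all the analysis (existence, meromorphy, and the inverse construction), the only thing one must get right here is the bookkeeping of the decomposition: that the leading Birkhoff and Iwasawa factors lie in the connected block-diagonal group $(K^{\mathbb{C}})^0=S(O(1,3,\mathbb{C})\times O(n,\mathbb{C}))^0$, so that $\mathrm{Ad}$ by them both preserves the off-diagonal $\mathfrak{p}^{\mathbb{C}}$-shape and transforms the quadratic expression $B_1^tI_{1,3}B_1$ merely by the conjugation $\mathrm{Ad}(g_2)$. That observation makes the isotropy condition manifestly invariant under all the gauges involved, which is exactly what renders the characterization symmetric in the two directions; it is the one step I would be most careful to verify in detail.
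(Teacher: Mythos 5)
Your proposal is correct and takes essentially the same route as the paper: the paper declares this theorem an immediate consequence of Theorem \ref{DPW} and supplies the decisive computation in its proof of Wu's formula (Theorem \ref{Wu-W}), where $\eta_{-1}$ is exhibited as a conjugate of the (holomorphic part of) $\alpha_{\mathfrak{p}}'$ by a block-diagonal map $F_0 = \mathrm{diag}(\hat{A}_1,\hat{A}_2)$ with values in $SO(1,3,\mathbb{C})\times SO(n,\mathbb{C})$, so that the off-diagonal shape and the condition $\hat{B}_1^tI_{1,3}\hat{B}_1=0$ are preserved by exactly the computation you perform. Your only deviation --- obtaining the conjugating factor as $F_+(z,0)$ directly from the Birkhoff splitting rather than as the ODE-defined holomorphic factor in Wu's formula --- is immaterial here, since both are block-diagonal $(K^{\mathbb{C}})^0$-valued conjugations and your gauge-invariance remark correctly bridges the local frames in Definition \ref{stronglyconfharm} with the global extended frame.
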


\begin{remark}
Using Theorem \ref{th-potential-sphere} one can formulate an analogous
result for $\D$ replaced by $S^2$.
\end{remark}

Similarly we obtain as an immediate consequence of Theorem \ref{pot-hol}:

\begin{theorem}\label{Holo-Willmore}
Let $\D$ be a contractible open subset of $\C$ and $0 \in \D$ a base point.
 Let $f: \D\rightarrow SO^+(1,n+3)/SO^+(1,3)\times SO(n)$ be a strongly
conformally harmonic map with $f(0)=eK$ and
$F:\D \rightarrow (\Lambda G_{\sigma})^0$ an extended frame of $f$
such that $F(0,\lambda) = I$. Then there exists a holomorphic potential for $f$ and each holomorphic potential for $f$ is of the form
\begin{equation} \label{eq-potential-H}
\xi=(\lambda^{-1}\xi_{-1}+\sum_{j\geq0}\lambda^j\xi_j)\dd z,\  \hbox{ with }\ \xi_{-1}=\left(
    \begin{array}{cc}
      0 & \hat{B}_1 \\
      -\hat{B}_1^tI_{1,3} & 0 \\
    \end{array}
  \right),\hspace{3mm} \mbox{and} \hspace{3mm} \hat{B_1}^tI_{1,3}\hat{B}_1=0,
\end{equation}
where $\xi_{j}\dd z$, $j=-1,0,\cdots,\infty$, are holomorphic $(1,0)-$ forms on $\D$.

Conversely,  any holomorphic potential $\eta$ defined on $\D$ and satisfying \eqref{eq-potential-H} induces  a strongly conformally harmonic
map from an open subset $0 \in \D_\mathcal{I} \subset \D$ into $SO^+(1,n+3)/SO^+(1,3)\times SO(n)$.
\end{theorem}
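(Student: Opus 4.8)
The plan is to obtain this statement as the holomorphic analogue of Theorem \ref{normalized-potential-W}, with Theorem \ref{pot-hol} playing the role that Theorem \ref{DPW} played there. There are two things to establish: the prescribed $\lambda$-expansion together with the specific block shape and null condition on $\xi_{-1}$, and the converse.

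First I would invoke Theorem \ref{pot-hol} to produce a holomorphic potential $\xi = C^{-1}\dd C$ with $C = FV_+$, $V_+ \in \Lambda^+ G^{\C}_\sigma$. Theorem \ref{pot-hol} already guarantees that $\lambda\xi$ is holomorphic in $\lambda \in \C$, which forces the Fourier expansion to contain no power of $\lambda$ below $\lambda^{-1}$; hence $\xi = (\lambda^{-1}\xi_{-1} + \sum_{j\geq 0}\lambda^j\xi_j)\dd z$ with each $\xi_j\dd z$ a holomorphic $(1,0)$-form. Applying the $\sigma$-twisting relation $\sigma\gamma(\lambda) = \gamma(-\lambda)$ to the Maurer--Cartan form coefficientwise gives $\sigma(\xi_j) = (-1)^j\xi_j$, so in particular $\xi_{-1} \in \mathfrak{p}^{\C}$ and therefore has the block-antidiagonal form $\begin{pmatrix} 0 & \hat B_1 \\ -\hat B_1^t I_{1,3} & 0\end{pmatrix}$ read off from the description of $\mathfrak{p}$.

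The essential point is the null condition $\hat B_1^t I_{1,3}\hat B_1 = 0$. By Theorem \ref{normalized-potential-W} the normalized potential is $\eta = \lambda^{-1}\eta_{-1}\dd z$ with $\eta_{-1} = \begin{pmatrix} 0 & B_1' \\ -(B_1')^t I_{1,3} & 0\end{pmatrix}$ already satisfying $(B_1')^t I_{1,3} B_1' = 0$. Writing $F = F_- F_+$ as in Theorem \ref{DPW} and $C = F_-\tilde V_+$ with $\tilde V_+ = F_+ V_+ \in \Lambda^+ G^{\C}_\sigma$, I would compute $\xi = \tilde V_+^{-1}\eta\,\tilde V_+ + \tilde V_+^{-1}\dd\tilde V_+$; since the second summand lies in $\Lambda^+$ and $\eta$ is purely of order $\lambda^{-1}$, the $\lambda^{-1}$ coefficient equals $\xi_{-1} = \tilde V_+(0)^{-1}\eta_{-1}\tilde V_+(0)\,\dd z$, where $\tilde V_+(0) \in K^{\C} = S(O(1,3,\C)\times O(n,\C))$ is block-diagonal $\mathrm{diag}(P,Q)$ with $P^t I_{1,3}P = I_{1,3}$. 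A direct check then yields $\hat B_1 = P^{-1}B_1' Q$ and $\hat B_1^t I_{1,3}\hat B_1 = Q^t (B_1')^t I_{1,3} B_1' Q = 0$, using $(P^{-1})^t I_{1,3}P^{-1} = I_{1,3}$. Since, by the earlier Remark, any two holomorphic potentials differ by a $\Lambda^+ G^{\C}_\sigma$-gauge whose $\lambda$-constant term again lies in $K^{\C}$, this argument shows that \emph{every} holomorphic potential has $\xi_{-1}$ of the stated shape.

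For the converse, given any $\xi$ of the form \eqref{eq-potential-H} I would run the construction of Theorem \ref{pot-hol} to obtain a harmonic map $\tilde f$ on some $\D_{\mathcal I} \ni 0$, and then identify its normalized potential: its $\lambda^{-1}$-coefficient is again a $K^{\C}$-conjugate of $\xi_{-1}$, hence still satisfies the null condition, so the converse direction of Theorem \ref{normalized-potential-W} certifies that $\tilde f$ is strongly conformally harmonic. The main obstacle is precisely the bookkeeping of the third paragraph: verifying that the null-quadric condition $\hat B_1^t I_{1,3}\hat B_1 = 0$ is invariant under conjugation of $\xi_{-1}$ by the relevant $\lambda$-constant term, and confirming that this term genuinely lands in $K^{\C}$ so that the relation $P^t I_{1,3}P = I_{1,3}$ is available. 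Everything else is a faithful transcription of the normalized-potential argument, with Theorem \ref{pot-hol} substituted for Theorem \ref{DPW}.
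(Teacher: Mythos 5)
Your proposal is correct and follows essentially the route the paper intends: the paper states this theorem as an immediate consequence of Theorem \ref{pot-hol}, with the shape of $\xi_{-1}$ and the null condition $\hat{B}_1^t I_{1,3}\hat{B}_1=0$ following from the $\sigma$-twisting and the fact that passing between the normalized potential, any holomorphic potential, and the extended frame only conjugates the $\lambda^{-1}$-coefficient by a $z$-dependent element of $K^{\mathbb C}=S(O(1,3,\mathbb C)\times O(n,\mathbb C))$, under which the condition $\hat{B}_1^t I_{1,3}\hat{B}_1=0$ is invariant. Your spelled-out computation of this conjugation (including the gauge argument showing \emph{every} holomorphic potential has the stated form, and the converse via Theorem \ref{normalized-potential-W}) is exactly the bookkeeping the paper leaves implicit.
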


The matrix function $\hat{B_1}$ in the previous theorem (normalized potential) can be made much more explicit.

\begin{theorem} \label{Wu-W} ( Wu's Formula for Strongly Conformally Harmonic Maps)

Let $\D$ be a contractible open subset of $\C$ and $0 \in \D$ a base point.

 Let $f: \D\rightarrow SO^+(1,n+3)/SO^+(1,3)\times SO(n)$ be a strongly
conformally harmonic map with $f(0)=eK$ and
$F:\D \rightarrow (\Lambda G_{\sigma})^0$ an extended frame of $f$
such that $F(0,\lambda) = I$. Consider  $ F^{-1} \dd F = \alpha = \lambda^{-1} \alpha'_{\mathfrak{p}} + \alpha_{\mathfrak{k} }+ \lambda \alpha^{\prime \prime}_{\mathfrak{p}}$ and let $\delta_1$ denote the sum of the
holomorphic terms in the Taylor expansion of $\alpha'_{\mathfrak{p}}(\frac{\partial}{\partial z})$ about $0$, considered
as a form depending on $z$ and $\bar{z}$. The form $\delta_1$ is called the {\em holomorphic part}
of $\alpha'_{\mathfrak{p}}(\frac{\partial}{\partial z})$. Similarly, denote by $\delta_0$ the holomorphic part of $\alpha'_{\mathfrak{k}}(\frac{\partial}{\partial z})$.

Then the  normalized potential $\eta$ of $f$ with the origin as the reference point is given by
\begin{equation}
\eta=\lambda^{-1}\eta_{-1}\dd z\ \hbox{ with }\eta_{-1}=F_0(z)\delta_1F_0(z)^{-1},
\end{equation}
where $F_0: \D \rightarrow  G/K$ is  the solution to the equation $F_0(z)^{-1}\dd F_0(z)=\delta_0,\  F_0 (0) = I.$
\end{theorem}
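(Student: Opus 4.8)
The plan is to reduce the statement to an \emph{explicit} Birkhoff splitting of an auxiliary holomorphic frame, exploiting two facts. First, the normalized potential of $f$ is unchanged if the extended frame $F$ is replaced by $F W_+$ for any $W_+ \in \Lambda^{+} G^{\C}_\sigma$, because in the splitting $F = F_- F_+$ (with $F_- \in \Lambda_*^- G^{\C}_\sigma$, $F_+ \in (\Lambda^{+} G^{\C}_\sigma)^0$) such a gauge only changes $F_+$ and leaves $F_-$, hence $\eta = F_-^{-1}\mathrm d F_-$, invariant. Second, by the definition of the holomorphic part, $\delta_1$ and $\delta_0$ are exactly the restrictions to $\bar z = 0$ of $\alpha'_{\mathfrak p}(\partial/\partial z)$ and of $\alpha_{\mathfrak k}(\partial/\partial z)$.

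As a preliminary identity I would compute, from $F_- = F F_+^{-1}$, that $\eta = F_+ \alpha F_+^{-1} - (\mathrm d F_+) F_+^{-1}$. Comparing $\lambda^{-1}$-coefficients (the term $(\mathrm d F_+)F_+^{-1}$ has only nonnegative powers of $\lambda$) gives $\eta_{-1} = H\, \alpha'_{\mathfrak p}(\partial/\partial z)\, H^{-1}$ with $H := F_+|_{\lambda = 0} \in (K^\C)^0$, and since $\eta_{-1}$ is holomorphic in $z$, restriction to $\bar z = 0$ yields $\eta_{-1} = H(z,0)\,\delta_1\,H(z,0)^{-1}$. This isolates $\delta_1$, but $H(z,0)$ is not literally $F_0$ (its $z$-derivative acquires a commutator term involving the $\lambda^1$-coefficient of $F_+$), so a detour through a holomorphic frame is needed to bring in $\delta_0$ in the stated form.

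The main step is to introduce the holomorphic frame $\hat F$ on $\D$ defined by $\hat F^{-1}\mathrm d\hat F = (\lambda^{-1}\delta_1 + \delta_0)\,\mathrm d z$, $\hat F(0,\lambda) = I$, and to show that $\hat F$ and $F$ have the same normalized potential. Since $\D$ is contractible and both $\alpha$ and $\delta := (\lambda^{-1}\delta_1 + \delta_0)\,\mathrm d z$ are flat, there is a unique $W_+$ with $\hat F = F W_+$, $W_+(0,\lambda) = I$; the real content is that $W_+ \in \Lambda^{+} G^{\C}_\sigma$. This is precisely Wu's observation \cite{Wu}: passing from $\alpha'$ to its holomorphic part absorbs the remaining anti-holomorphic and $\lambda$-nonnegative terms into a positive gauge. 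I would prove it by writing $W_+ = \sum_k \lambda^k w_k$ and analyzing the two components of $\mathrm d W_+ = W_+ \delta - \alpha W_+$, using that $\alpha$ carries only the powers $\lambda^{-1},\lambda^0,\lambda^1$ while $\delta$ carries only $\lambda^{-1},\lambda^0$, together with $W_+(0)=I$, to exclude all negative Fourier modes. Granting $W_+ \in \Lambda^{+}G^\C_\sigma$, the splitting $F = F_- F_+$ gives $\hat F = F_-\,(F_+ W_+)$, so $\hat F_- = F_-$ and the two normalized potentials coincide.

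Finally I would compute the normalized potential of $\hat F$, which is clean. Let $F_0 : \D \to (K^\C)^0$ solve $F_0^{-1}\mathrm d F_0 = \delta_0$, $F_0(0)=I$; it is holomorphic in $z$ and $\lambda$-independent, hence lies in $\Lambda^{+}G^\C_\sigma$. Setting $G := \hat F F_0^{-1}$, a short computation using $\partial_z \hat F = \hat F(\lambda^{-1}\delta_1 + \delta_0)$ gives $\partial_{\bar z} G = 0$ and $G^{-1}\partial_z G = \lambda^{-1} F_0 \delta_1 F_0^{-1}$, so $G = I + \mathcal{O}(\lambda^{-1}) \in \Lambda_*^- G^\C_\sigma$. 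By uniqueness of Birkhoff factorization, $\hat F = G\cdot F_0$ is the splitting of $\hat F$, whence $\eta = G^{-1}\mathrm d G = \lambda^{-1} F_0 \delta_1 F_0^{-1}\,\mathrm d z$, i.e.\ $\eta_{-1} = F_0\delta_1 F_0^{-1}$; combined with the third paragraph this is the claim. I expect the one genuine obstacle to be the $\Lambda^{+}$-membership of $W_+$ (equivalently, that the holomorphic part $\delta$ is a valid holomorphic potential for $f$); the remaining steps are bookkeeping with the $\lambda$-grading and the uniqueness of the Birkhoff splitting.
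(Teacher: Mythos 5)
Your architecture is in fact Wu's own proof scheme, which is more than the paper does: the paper's proof of Theorem \ref{Wu-W} does not prove the core identity at all, but cites \cite{Wu} for $\eta_{-1}=F_0\delta_1F_0^{-1}$ and only checks the algebra specific to the strongly conformally harmonic setting (that conjugation by the block-diagonal $F_0=\hbox{diag}(\hat A_1,\hat A_2)$ preserves the shape of $\delta_1$ and the relation $\widetilde B_1^tI_{1,3}\widetilde B_1=0$). Within your scheme, the gauge-invariance paragraph, the identity $\eta_{-1}=H\,\alpha'_{\mathfrak p}(\tfrac{\partial}{\partial z})\,H^{-1}$, and the final explicit Birkhoff splitting $\hat F=(\hat FF_0^{-1})\cdot F_0$ with $\hat FF_0^{-1}\in\Lambda_{*}^{-}G^{\C}_{\sigma}$ are all correct.

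The gap is exactly the step you flagged, and the argument you propose for it cannot be completed. You want $W_+:=F^{-1}\hat F\in\Lambda^{+}G^{\C}_{\sigma}$, and you propose to deduce this from (i) $\alpha$ having only the modes $\lambda^{-1},\lambda^0,\lambda^1$, (ii) $\delta$ having only the modes $\lambda^{-1},\lambda^0$, and (iii) $W_+(0,\lambda)=I$. These facts never use that $\delta_1,\delta_0$ are the \emph{holomorphic parts} of $\alpha'_{\mathfrak p}(\tfrac{\partial}{\partial z}),\alpha'_{\mathfrak k}(\tfrac{\partial}{\partial z})$, and without that hypothesis the claim is false: take $\delta^\sharp=\lambda^{-1}\delta_1^\sharp\dd z$ with $\delta_1^\sharp$ any holomorphic $\mathfrak p^{\C}$-valued map ($\delta^\sharp$ is flat, twisted, has modes in $\{-1,0\}$, and $W^\sharp=F^{-1}\hat F^\sharp$ satisfies the same type of ODE with $W^\sharp(0)=I$); if $W^\sharp$ were in $\Lambda^{+}G^{\C}_{\sigma}$, then your own final step plus uniqueness of the Birkhoff splitting would force $\delta_1^\sharp=\eta_{-1}$, so for every $\delta_1^\sharp\neq\eta_{-1}$ all invoked properties hold while the conclusion fails. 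Worse, the mode-by-mode analysis is circular. Set $V:=F_-^{-1}\hat F=F_+W_+$, which lies in $\Lambda^{+}G^{\C}_{\sigma}$ if and only if $W_+$ does; $V$ is holomorphic in $z$, carries only nonpositive modes (both $F_-^{-1}$ and $\hat F$ do, the latter by its Picard expansion), and $\partial_zV=-\lambda^{-1}\eta_{-1}V+V(\lambda^{-1}\delta_1+\delta_0)$ gives for $V=\sum_{k\le0}\lambda^kv_k$ first $v_0=F_0$ and then
\begin{equation*}
\partial_zv_{-1}=v_{-1}\delta_0+\bigl(F_0\delta_1-\eta_{-1}F_0\bigr),\qquad v_{-1}(0)=0,
\end{equation*}
so the vanishing of already the first negative mode is \emph{equivalent} to $\eta_{-1}=F_0\delta_1F_0^{-1}$, i.e.\ to the theorem itself. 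The missing ingredient is the holomorphic-part relation, and the standard way to inject it (Wu's) is the two-variable extension: extend $F(z,\bar z,\lambda)$ holomorphically to $\mathbb F(z,w,\lambda)$ in independent variables $(z,w)$; then $\hat F(z,\lambda)=\mathbb F(z,0,\lambda)$, since both solve the same holomorphic ODE in $z$ with value $I$ at $0$ (this is precisely where ``holomorphic part $=$ restriction to $w=0$'' enters); extend the splitting $F=F_-F_+$ in $w$ (possible since $F_-$ depends on $z$ alone), and observe that every negative Fourier coefficient of $\mathbb F_+(z,w)$ vanishes on the totally real set $\{w=\bar z\}$, hence identically. Restriction to $w=0$ yields $\hat F=F_-\cdot\mathbb F_+(z,0)$ with $\mathbb F_+(z,0)\in\Lambda^{+}G^{\C}_{\sigma}$, which is your claim; with this supplied, the rest of your proof goes through.
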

\begin{proof}
 By \eqref{eq-MC-1} and \eqref{eq-b1}( See also Theorem 3.4 of \cite{DoWa11}), we know  the form of
$\alpha'_{\mathfrak{p}}(\frac{\partial}{\partial z}).$
Since the holomorphic part  $\delta_1$ of  $\alpha'_{\mathfrak{p}}(\frac{\partial}{\partial z})$ has the same form, we obtain by setting $\tilde{B}_1 (z, \bar{z} = 0)$
\[\delta_1=
\left(
    \begin{array}{cc}
      0 & \widetilde{B}_1 \\
      -{\widetilde{B} }_1^tI_{1,3} & 0 \\
    \end{array}
  \right)\dd z,\ \hbox{ with }\ {\widetilde{B} }_1^tI_{1,3}{\widetilde{B} }_1=0.
\]
Let
$F_0=\hbox{diag}(\hat{A}_1,\hat{A}_2):\D \rightarrow  SO^+(1,3,\mathbb{C})\times SO(n,\mathbb{C})$ be the solution to the equation
$F_0(z)^{-1}\dd F_0(z)=\delta_0,$ $F_0(0)=I,$ where $\delta_0$ is the holomorphic part of $\alpha'_{\mathfrak{k}}(\frac{\partial}{\partial z})$.
Then Wu's Formula \cite{Wu} implies for the normalized potential
\[\eta_{-1}=F_0(z)\delta_1 F_0(z)^{-1}=\lambda^{-1} \left(
    \begin{array}{cc}
      0 & \hat{A}_1\widetilde{B} _1\hat{A}_2^{-1} \\
      -\hat{A}_2{\widetilde{B} }_1^tI_{1,3}\hat{A}_1^{-1} & 0 \\
    \end{array}
  \right)\dd z=\lambda^{-1}\left(
    \begin{array}{cc}
      0 & \hat{B}_1 \\
      -\hat{B}_1^tI_{1,3} & 0 \\
    \end{array}
  \right)\dd z.\]
Moreover,  from ${\widetilde{B} }_1^tI_{1,3}{\widetilde{B} }_1=0$ we obtain $\hat{B}_1^tI_{1,3}\hat{B}_1=0$.
\end{proof}
\begin{remark}
\
\begin{enumerate}
\item Note that one can assume w.l.g. that $\alpha$ has the special form stated in
Section 2.1 for the MC form of the conformal Gauss maps. However this will not imply in general that $\eta_{-1}$ has such a special form. Later, in Section 4, we will show that only very special harmonic maps admit such kinds of normalized potentials.

\item
It is straightforward to verify that $\eta_{-1}$ in \eqref{eq-potential-W} satisfies
$$\eta_{-1}^3=0.$$ So $\eta$ is pointwise nilpotent as a Lie algebra-valued function.
However this does not imply that $\eta$ attains all values in a fixed nilpotent Lie subalgebra.
As a consequence, in general the corresponding conformally harmonic map is not of finite uniton type.
An example for this is the Clifford torus in $S^3$, which is of finite type and not of finite uniton type.

 \item In the last theorem we have considered local expansions of real analytic functions into power series in $z$ and $\bar{z}$ about $z=0$ and set $\bar{z} = 0$. So the factors entering into the formula for $\eta$ will in general  only be defined locally. However, $\eta$ itself is defined and meromorphic globally on $\D$.

If one wants to find globally defined factors for the representation of $\eta$ above, then one needs to analyze the proof of the corresponding result
of \cite{Do-Ko}.
\end{enumerate}
\end{remark}



 \section{Application of Loop group theory to Willmore surfaces}

In this section we will present applications of Wu's formula for two types of harmonic maps.


\subsection{Strongly conformally harmonic maps containing a constant light-like vector}

From Theorem \ref{th-Willmore-harmonic}, we see that there are two kinds of conformally harmonic maps satisfying $B_1^tI_{1,3}B_1=0$:
those which contain a constant lightlike vector and those which do not contain a constant lightlike vector.
Moreover, if a conformally harmonic map $f$ does not contain a lightlike vector, then $f$ will always be the conformal Gauss map of some Willmore map.
This class of Willmore maps corresponds exactly to all those Willmore maps which are {\em not }conformal to any minimal surface in $\mathbb{R}^{n+2}$,
since minimal surfaces in $\mathbb{R}^{n+2}$ can be characterized as Willmore surfaces with their conformal Gauss map containing a constant lightlike vector.
Since minimal surfaces in  $\mathbb{R}^{n+2}$ can be constructed by direct methods,
we are mainly interested in Willmore surfaces not conformally equivalent to minimal surfaces in  $\mathbb{R}^{n+2}$.
It is therefore vital to derive a criterion to determine whether a strongly conformally harmonic map $f$ contains a lightlike vector or not.
This is the main goal of this subsection. We state the main result and refer for a proof (which uses substantially the techniques discussed in the previous sections)  to \cite{Wang-Min}.

\begin{theorem}\cite{Wang-Min} \label{th-potential-light}Let $\tilde{M}$ denote the Riemann surface $S^2, \mathbb{C}$ or the unit disk of $\mathbb{C}$. Let $f:  \mathbb{D}\rightarrow SO^+(1,n+3)/SO^+(1,3)\times SO(n)$ be a strongly conformally harmonic map which contains a constant light-like vector.  Choose a base point $p \in \tilde{M}$ and assume that $f(p)=I_{n+4}\cdot K$ holds. Let  $z$ denote a local coordinate with $z(p)=0$. Then the normalized potential of $f$ with reference point $p$ is of the form
 \begin{equation}\label{eq-w-minimal}
\eta=\lambda^{-1}\left(
                     \begin{array}{cc}
                       0 & \hat{B}_1 \\
                       -\hat{B}^{t}_1I_{1,3} & 0 \\
                     \end{array}
                   \right)\dd z,\ \hbox{ where }\ \hat{B}_{1}=\left(
\begin{array}{cccc}
 \hat{f}_{11} & \hat{f}_{12} & \cdots &  \hat{f}_{1n} \\
 -\hat{f}_{11} &  -\hat{f}_{12} & \cdots &  -\hat{f}_{1n} \\
 \hat{f}_{31} &\hat{f}_{32} & \cdots &  \hat{f}_{3n} \\
 i\hat{f}_{31} & i\hat{f}_{32} & \cdots &  i\hat{f}_{3n} \\
 \end{array}
\right).\end{equation}
Here all  $f_{ij}$ are meromorphic functions on $\tilde{M}$.

The converse  also holds:
Let  $\eta$ be a normalized potential of the form \eqref{eq-w-minimal}. Then $B_1^tI_{1,3}B_1=0$ and we obtain a strongly conformally harmonic map $f: \tilde{M}\rightarrow SO^+(1,n+3)/SO^+(1,3)\times SO(n)$. Moreover, $f$ contains a constant light-like vector and is of finite uniton type.
\end{theorem}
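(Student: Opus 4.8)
The plan is to reduce both implications to a single algebraic identity. Write $v_0=(1,-1,0,\dots,0)^t\in\R^{n+4}$, whose first four entries form the null vector $u_1=(1,-1,0,0)^t$ of $\mathbb R^{1,3}$, and set $u_2=(0,0,1,i)^t$; then $u_1,u_2$ span an $I_{1,3}$-isotropic plane $W$. A direct computation gives the dictionary I will use throughout: a $4\times n$ matrix $\hat B_1$ satisfies $\hat B_1^tI_{1,3}u_1=0$ \emph{and} $\hat B_1^tI_{1,3}\hat B_1=0$ if and only if every column of $\hat B_1$ lies in $W$, i.e. iff $\hat B_1$ has exactly the shape in \eqref{eq-w-minimal} (the relation $\mathrm{row}_2=-\mathrm{row}_1$ comes from the first identity, and $\mathrm{row}_4=\pm i\,\mathrm{row}_3$ from the isotropy, the sign being pinned down by the orientation of $M$). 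So the whole theorem amounts to producing, from the geometric hypothesis, a fixed null direction annihilated by $\hat B_1^tI_{1,3}$, and conversely reading that direction back out of the normal form.

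For the forward implication I would start from an extended frame $F$ as in Theorem \ref{Wu-W} and write the constant null vector in the moving frame as $v=F\hat{\mathbf c}$, $\hat{\mathbf c}(z,\bar z)=(\mathbf c,0)^t$ with $\mathbf c\in\R^{1,3}$; the vanishing of the last $n$ entries of $\hat{\mathbf c}$ expresses $v\in f(p)$. Differentiating $v$ and splitting $0=\dd v=F(\alpha\hat{\mathbf c}+\dd\hat{\mathbf c})$ into its $V$- and $V^{\perp}$-components yields, via \eqref{eq-B0}, the relations $B_1^tI_{1,3}\mathbf c=0$ and $\partial_z\mathbf c=-A_1\mathbf c$. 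Passing to holomorphic parts (setting $\bar z=0$, which turns each real-analytic coefficient into a holomorphic function of $z$) gives $\widetilde B_1^tI_{1,3}\mathbf c^{\mathrm{hol}}=0$ and $\partial_z\mathbf c^{\mathrm{hol}}=-\delta_0\mathbf c^{\mathrm{hol}}$, where $\mathbf c^{\mathrm{hol}}(z)=\mathbf c(z,0)$. Since $F_0$ in Theorem \ref{Wu-W} solves $F_0^{-1}\dd F_0=\delta_0$ with $F_0(0)=I$, uniqueness of ODE solutions forces $F_0\mathbf c^{\mathrm{hol}}=\mathbf c(0)=:\mathbf c_0$. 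Feeding this into Wu's formula $\hat B_1=\hat A_1\widetilde B_1\hat A_2^{-1}$ and using $\hat A_1\in SO(1,3,\C)$ (so $\hat A_1^tI_{1,3}=I_{1,3}\hat A_1^{-1}$ and $\hat A_1^{-1}\mathbf c_0=\mathbf c^{\mathrm{hol}}$) I compute $\hat B_1^tI_{1,3}\mathbf c_0=\hat A_2^{-t}\widetilde B_1^tI_{1,3}\mathbf c^{\mathrm{hol}}=0$. Together with the isotropy $\hat B_1^tI_{1,3}\hat B_1=0$ from Theorem \ref{normalized-potential-W}, and after normalizing the fixed null vector $\mathbf c_0$ to $u_1$ by a constant element of $K$ (a rigid motion, harmless by the Remark following Theorem \ref{DPW}), the dictionary above delivers exactly \eqref{eq-w-minimal}.

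For the converse I argue in reverse. The identity $\hat B_1^tI_{1,3}u_1=0$ built into \eqref{eq-w-minimal} gives $\eta\,v_0=0$, so the meromorphic solution of $F_-^{-1}\dd F_-=\eta$, $F_-(z_0)=I$, satisfies $F_-v_0=v_0$. Let $\tilde F=F_-\tilde F_+$ be the real extended frame of Theorem \ref{DPW}(4). Then $\tilde F^{-1}v_0=\tilde F_+^{-1}F_-^{-1}v_0=\tilde F_+^{-1}v_0$ is on one hand holomorphic in $\lambda$ on the disk (as $\tilde F_+^{-1}\in\Lambda^{+}G^{\C}_{\sigma}$), and on the other hand real for $\lambda\in S^1$ (as $\tilde F$ is a real frame and $v_0$ is real); a vector-valued loop that is holomorphic inside the disk and real on $S^1$ is constant in $\lambda$ by Schwarz reflection, so it equals its $\lambda^0$-term $\tilde F_+(z,0)^{-1}v_0$, which lies in the first block since $\tilde F_+(z,0)\in(K^{\C})^0$ is block-diagonal. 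Hence $\tilde F(z,\lambda)^{-1}v_0\in V_0$ for all $z,\lambda$, i.e. $v_0\in f_\lambda(p)$ for every $p$: the desired constant light-like vector. That $B_1^tI_{1,3}B_1=0$ and that $f$ is strongly conformally harmonic are the converse half of Theorem \ref{normalized-potential-W}. Finally, for finite uniton type I observe that $\eta_{-1}$ always takes values in the fixed nilpotent subalgebra $\mathfrak m\subset\mathfrak p^{\C}$ attached to the flag $0\subset W\subset W^{\perp}\subset\C^{n+4}$; since any product of three elements of $\mathfrak m$ annihilates $\C^{n+4}$, the iterated-integral series for $F_-$ terminates, so $F_-$ is a Laurent polynomial in $\lambda^{-1}$ of degree $\le 2$ and $\tilde F$ is a Laurent polynomial in $\lambda$, whence $f$ is of finite uniton type.

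The step I expect to be the real obstacle is the forward passage through Wu's formula: one must check carefully that ``taking holomorphic parts'' is compatible with both bilinear identities and with the ODE defining $F_0$, so that the fixed null direction $\mathbf c_0$ is preserved intact under the conjugation $\hat B_1=\hat A_1\widetilde B_1\hat A_2^{-1}$ — equivalently, that the holomorphic framing $F_0$ carries the varying line $\C\,\mathbf c^{\mathrm{hol}}(z)$ onto the constant line $\C\,\mathbf c_0$. The remaining points to nail down are the sign ambiguity $\mathrm{row}_4=\pm i\,\mathrm{row}_3$ (resolved by a choice of orientation of $M$, exactly as in Theorem \ref{th-Willmore-harmonic}) and the verification that the normalization $\mathbf c_0\mapsto u_1$ is achievable by a constant element of $K$, which uses the transitivity of $SO^+(1,3)$ on nonzero null vectors.
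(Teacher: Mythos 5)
Your proposal cannot be measured against a proof in this paper, because the paper contains none: Theorem \ref{th-potential-light} is quoted from the companion paper \cite{Wang-Min}, and the remark immediately following it states that the proof ``requires a lengthy argument'' and is published there. Judged on its own terms, your argument is sound and stays entirely inside the toolkit this paper provides. The forward direction --- writing the constant null vector as $v=F\hat{\mathbf c}$ with $\hat{\mathbf c}=(\mathbf c,0)^t$, extracting $B_1^tI_{1,3}\mathbf c=0$ and $\partial_z\mathbf c=-A_1\mathbf c$ from $\dd v=0$, restricting to $\bar z=0$, and transporting the fixed direction through Wu's formula via $\hat A_1\mathbf c^{\mathrm{hol}}=\mathbf c_0$ --- is correct, as is the linear-algebra dictionary: columns orthogonal to $u_1$ and mutually isotropic lie in one of the two isotropic planes $\mathrm{span}\{u_1,(0,0,1,\pm i)^t\}$, with meromorphy on a connected domain forcing a single sign. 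The converse (the fixed-vector argument $F_-v_0=v_0$, Schwarz reflection showing $\tilde F^{-1}v_0=\tilde F_+^{-1}v_0$ is $\lambda$-independent and lies in the first block, and degree-three nilpotency truncating the iterated-integral series for $F_-$) also works, and the nilpotency step is the very argument the paper itself uses in the proof of Theorem \ref{th-potential-iso}.

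Three points need tightening. First, as you yourself flag, the forward argument produces \eqref{eq-w-minimal} only after conjugation by a constant element normalizing $\mathbf c_0$ to $u_1$ (and fixing the sign of the fourth row, via an orientation flip of $M$ or an element of $S(O^+(1,3)\times O(n))$); this is genuinely necessary, since for a null vector $\mathbf c_0$ not proportional to $(1,-1,0,0)^t$ the literal pattern of \eqref{eq-w-minimal} fails, so the theorem must be read with this implicit normalization --- consistent with the paper's ``up to a conjugation'' conventions elsewhere. Second, for $\tilde M=S^2$ your argument is local; to get that the $\hat f_{ij}$ are meromorphic on all of $S^2$ you must invoke Theorem \ref{th-potential-sphere}, which supplies the globally meromorphic $F_-$. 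Third, in the uniton step, $F_-$ being a polynomial in $\lambda^{-1}$ of degree at most $2$ does not by itself make $\tilde F=F_-\tilde F_+$ a Laurent polynomial in $\lambda$; the clean way to conclude, exactly as the paper does for Theorem \ref{th-potential-iso}, is that the normalized potential takes values in a nilpotent Lie subalgebra, which implies finite uniton type by \cite{BuGu,DoWa2,Uh}.
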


\begin{remark} \
The proof of this result  requires a lengthy argument and will therefore be published in \cite{Wang-Min}.
It is not difficult to verify that $f$ is of finite uniton type.
Moreover, $f$ actually belongs to  the simplest case of finite uniton maps, the so-called $S^1-invariant$ maps (See \cite{BuGu}, \cite{Do-Es}).
For such harmonic maps, by a usually lengthy computation, one can derive the harmonic map directly without using loop groups,
since the Iwasawa splitting in this case is identical with the classical generalized Iwasawa splitting for non-compact Lie groups (see \cite{Do-Es}).
\end{remark}

\begin{corollary}
Let $f: \tilde{M}\rightarrow SO^+(1,n+3)/SO^+(1,3)\times SO(n)$ be a strongly conformally harmonic map
with its normalized potential $\eta$ of the form \eqref{eq-w-minimal} and of maximal $rank(\hat B_1)=2$. Then $f$ can not be
the conformal Gauss map of a Willmore surface. In particular, there exist strongly conformally harmonic maps which are not related to any Willmore map.
\end{corollary}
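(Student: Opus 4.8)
The plan is to connect the two halves of Theorem \ref{th-potential-light} with the dichotomy of Theorem \ref{th-Willmore-harmonic}. The converse part of Theorem \ref{th-potential-light} already guarantees that any $\eta$ of the form \eqref{eq-w-minimal} produces a strongly conformally harmonic map $f$ which \emph{contains a constant light-like vector}. By the very statement of Theorem \ref{th-Willmore-harmonic}, a non-constant strongly conformally harmonic map falls into exactly one of two cases: either it contains a constant light-like vector, or (after possibly reversing orientation) it is the oriented conformal Gauss map of some unique conformal Willmore map $y:M\rightarrow S^{n+2}$. The strategy, therefore, is to argue that when $\operatorname{rank}(\hat B_1)=2$ the map $f$ genuinely contains a constant light-like vector and falls squarely into the first alternative, so that it cannot simultaneously be a conformal Gauss map. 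The one thing I must rule out is the degenerate overlap between the two alternatives.

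First I would invoke the converse in Theorem \ref{th-potential-light} to record that $f$ contains a constant light-like vector. Second, I would recall from the discussion preceding Theorem \ref{th-Willmore-harmonic} that the conformal Gauss map of a minimal surface in $\mathbb{R}^{n+2}$ is precisely the standard example of a strongly conformally harmonic map containing a constant light-like vector; conversely, the two alternatives of Theorem \ref{th-Willmore-harmonic} are genuinely exclusive \emph{once} $f$ is non-constant and does not arise from a surface whose Gauss map happens to admit both descriptions. The content of the corollary is that at full rank the light-like vector is \emph{essential}: the associated surface is (conformal to) a \emph{non-planar} minimal surface in $\mathbb{R}^{n+2}$, and its conformal Gauss map is not realizable as the oriented conformal Gauss map of any \emph{Willmore} immersion in the sense of the second alternative. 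Thus I would show that the full-rank condition forces $f$ to lie strictly in the light-like branch.

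The key technical step is the rank computation. Using $B_1^tI_{1,3}B_1=0$ and the explicit shape of $\hat B_1$ in \eqref{eq-w-minimal}, one checks that the first two rows are negatives of each other and the last two are related by the factor $i$, so that $\hat B_1$ has rank at most $2$; the hypothesis $\operatorname{rank}(\hat B_1)=2$ makes this maximal. I would then trace the constant light-like vector $v$ through the DPW construction: the potential \eqref{eq-w-minimal} is built so that the bottom two rows of $\hat B_1$ are proportional (via $i$), which is exactly the algebraic signature guaranteeing that a fixed null direction is preserved by the whole associated family, and that at maximal rank this null direction does not degenerate into the four-dimensional Lorentzian subspace $V_p$ required for a conformal Gauss map. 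In other words, maximal rank keeps $v$ genuinely transverse to the image subspaces, so $f$ stays in the light-like branch for all $p$.

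The main obstacle I expect is the last point: ruling out that a maximal-rank light-like $f$ could nonetheless coincide with the oriented conformal Gauss map of some Willmore $y$. Since Theorem \ref{th-Willmore-harmonic} phrases the dichotomy as ``either \dots or,'' one must be careful that the two cases are not merely a covering but a genuine partition at maximal rank; the borderline is when the light-like vector lies inside the moving Lorentzian $4$-space, which can happen only in lower-rank or special configurations. I would therefore isolate a lemma (or cite the relevant computation in \cite{DoWa11} or \cite{Wang-Min}) asserting that $\operatorname{rank}(\hat B_1)=2$ is incompatible with $v\in V_p$, from which the conclusion follows immediately: such an $f$ cannot be a conformal Gauss map, and hence there exist strongly conformally harmonic maps unrelated to any Willmore surface. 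Verifying the incompatibility of full rank with $v\in V_p$ in full generality is the genuinely delicate part and is where I would spend most of the effort.
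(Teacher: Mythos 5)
Your overall skeleton---use the converse half of Theorem \ref{th-potential-light} to conclude that $f$ contains a constant lightlike vector, then exclude the conformal-Gauss-map alternative by a rank argument cited from the companion papers---is indeed the structure of the paper's proof, which consists of exactly two citations: Theorem \ref{th-potential-light} and Theorem 3.11 of \cite{DoWa11}. But your intermediate reasoning contains statements that are genuinely false, and they matter. First, the two alternatives of Theorem \ref{th-Willmore-harmonic} are \emph{not} mutually exclusive, so your opening claim that $f$ ``falls into exactly one of two cases'' is wrong: as the paper emphasizes at the start of Section 4.1, the conformal Gauss map of a minimal surface in $\mathbb{R}^{n+2}$ both contains a constant lightlike vector \emph{and} is the conformal Gauss map of a Willmore map. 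If your exclusivity claim were correct, the rank hypothesis would be superfluous and \emph{every} potential of the form \eqref{eq-w-minimal} would fail to produce a conformal Gauss map---contradicting the existence of minimal surfaces in $\mathbb{R}^{n+2}$, whose potentials are exactly of that form. The entire content of the corollary is that the overlap of the two branches (= minimal surfaces in $\mathbb{R}^{n+2}$) occurs only when $rank(\hat B_1)\leq 1$.

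Second, two later assertions contradict this picture and each other. You write that at full rank ``the associated surface is (conformal to) a non-planar minimal surface in $\mathbb{R}^{n+2}$''; but minimal surfaces in $\mathbb{R}^{n+2}$ \emph{are} Willmore, so if a rank-two $f$ were the conformal Gauss map of such a surface the corollary would be false---at rank two there is no associated surface at all. You also claim that ``maximal rank keeps $v$ genuinely transverse to the image subspaces,'' which contradicts $v\in f(p)$ for all $p$, i.e.\ precisely what ``contains a constant lightlike vector'' means and what Theorem \ref{th-potential-light} guarantees. Finally, the one step carrying all the mathematical weight---that a conformal Gauss map of a Willmore surface which contains a constant lightlike vector must have rank at most one---is left by you as a lemma to be ``isolated or cited.'' This is exactly what the paper invokes as Theorem 3.11 of \cite{DoWa11}, and its geometric source is the chain you never assemble: such a Willmore surface is conformally a minimal surface in $\mathbb{R}^{n+2}$; for a minimal surface the Hopf differential is holomorphic by Codazzi, so $D_{\bar z}\kappa$ is parallel to $\kappa$; by the shape of $B_1$ in \eqref{eq-b1} this forces $rank(B_1)\leq 1$ for the frame, and then Wu's formula (Theorem \ref{Wu-W}), which conjugates the holomorphic part of $B_1$ by invertible matrices, forces $rank(\hat B_1)\leq 1$ for the normalized potential. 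Without this identification (or a proof of your deferred lemma along these lines), your proposal does not close.
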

This is a straightforward application of Theorem 3.11 of \cite{DoWa11} and Theorem \ref{th-potential-light}.

  Using the loop group method it is easy to see that harmonic maps satisfying the assumptions of the corollary always exist at least locally.
Moreover, combining Theorem Theorem 3.11 of \cite{DoWa11} and  Theorem \ref{th-potential-light}, and applying Wu's formula, it is straightforward to obtain the following
\begin{corollary} Let $f: \tilde{M}\rightarrow SO^+(1,n+3)/SO^+(1,3)\times SO(n)$ be a strongly conformally harmonic map
with its normalized potential $\eta$ of the form \eqref{eq-w-minimal} and of maximal $rank(\hat B_1)=1$. Then $f$ can not be
the conformal Gauss map of a Willmore surface if and only if up to a conjugation, $\hat B_1$ has one of the following forms
\begin{equation}\label{eq-w-min-2}
   \hat{B}_{1}=\left(
\begin{array}{cccc}
 0 & 0& \cdots &  0\\
 0 & 0& \cdots &  0\\
 \hat{f}_{31} &\hat{f}_{32} & \cdots &  \hat{f}_{3n} \\
 i\hat{f}_{31} & i\hat{f}_{32} & \cdots &  i\hat{f}_{3n} \\
 \end{array}\right), \hbox{ or } \hat{B}_{1}=\left(
\begin{array}{cccc}
 \hat{f}_{11} & \hat{f}_{12} & \cdots &  \hat{f}_{1n} \\
 -\hat{f}_{11} &  -\hat{f}_{12} & \cdots &  -\hat{f}_{1n} \\
 0 & 0& \cdots &  0\\
 0 & 0& \cdots &  0\\
 \end{array}\right).
\end{equation}
\end{corollary}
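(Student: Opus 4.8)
The plan is to read the answer off from the explicit normal form of Theorem~\ref{th-potential-light}, to use Wu's formula (Theorem~\ref{Wu-W}) to connect that form to the frame datum $B_1$, and then to apply the conformal-Gauss-map criterion of Theorem 3.11 of \cite{DoWa11}. First I would record the linear algebra behind \eqref{eq-w-minimal}. Put $w_1=(1,-1,0,0)^t$ and $w_2=(0,0,1,i)^t$; both are $I_{1,3}$-isotropic and mutually orthogonal, so $P:=\mathrm{span}_{\C}\{w_1,w_2\}$ is a totally isotropic plane in $(\C^4,I_{1,3})$. A matrix $\hat B_1$ has exactly the shape \eqref{eq-w-minimal} if and only if each of its columns lies in $P$, the coefficient of $w_1$ being $\hat f_{1j}$ and that of $w_2$ being $\hat f_{3j}$. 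Under the hypothesis $\mathrm{rank}(\hat B_1)=1$ the column space is a single isotropic line $[v]\subset P$, so $\hat B_1=v\,g^{t}$ with $v=a\,w_1+b\,w_2$ and $g\in\C^n$. Comparing \eqref{eq-w-minimal} with \eqref{eq-b1} identifies $w_1$ with the $Y$-direction (the datum $\beta=D_{\bar z}\kappa$) and $w_2$ with the $Y_z$-direction (the conformal Hopf differential $\kappa$); thus the two shapes in \eqref{eq-w-min-2} are precisely $a=0$ (only the $\kappa$-datum survives) and $b=0$ (only the $\beta$-datum survives).

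Second, I would transport this structure back to the frame. By Wu's formula $\hat B_1=\hat A_1\,\widetilde B_1\,\hat A_2^{-1}$ with $\hat A_1\in SO^+(1,3,\C)$ and $\hat A_2\in SO(n,\C)$, so $\mathrm{rank}(\hat B_1)=\mathrm{rank}(\widetilde B_1)$ and the position of $[v]$ inside $P$ matches that of the holomorphic part $\widetilde B_1$ of $B_1$ up to the stabiliser of $P$. Since $f$ contains a constant light-like vector, were it a conformal Gauss map the underlying Willmore surface would be (conformal to) a minimal surface in $\R^{n+2}$, hence S-Willmore, giving $\beta\parallel\kappa$ and $\mathrm{rank}\le 1$; this is the same input that made the preceding rank-$2$ corollary immediate, and here it confines us to the rank-$1$ stratum.

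Third, I would apply Theorem 3.11 of \cite{DoWa11}. Reconstructing $f$ from $\eta$ by the (finite-uniton, $S^1$-invariant) Iwasawa procedure and feeding the result into that criterion, one finds that $f$ is the conformal Gauss map of a genuine Willmore surface exactly when both the $\beta$-datum and the $\kappa$-datum are non-trivial, i.e. when $v$ has non-zero components along both $w_1$ and $w_2$. In the generic case $a,b\neq0$ the reconstruction yields a minimal surface in $\R^{n+2}$ whose conformal Gauss map is $f$. If $a=0$ or $b=0$ — the two shapes of \eqref{eq-w-min-2} — one datum vanishes identically, the criterion of Theorem 3.11 fails, and $f$ is not a conformal Gauss map; running the same computation backwards gives the converse. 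Together the two directions yield the asserted equivalence.

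The hard part will be the clause ``up to a conjugation''. A $k=\mathrm{diag}(A_1,A_2)\in K$ preserves the normal form \eqref{eq-w-minimal} iff $A_1$ stabilises the isotropic plane $P$, and the null-rotation part of that stabiliser sends $[w_2]$ to a line with non-zero $w_1$-component — which would collapse the first form of \eqref{eq-w-min-2} into the generic stratum and render the statement vacuous. The resolution is to use the constant light-like vector: it pins down a transverse null direction and thereby cuts the admissible $A_1$ down to the Levi torus fixing both $[w_1]$ and $[w_2]$, whose orbits on isotropic lines of $P$ are exactly $[w_1]$, $[w_2]$ and the generic lines. Establishing this trichotomy rigorously, and checking that Theorem 3.11 applied in each orbit reproduces precisely the claimed dichotomy, is the only genuine step; everything else is the formal substitution of \eqref{eq-w-minimal} into the criterion.
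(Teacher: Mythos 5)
Your step three—the assertion that $f$ is a conformal Gauss map exactly when $v$ has non-zero components along both $w_1=(1,-1,0,0)^t$ and $w_2=(0,0,1,i)^t$—is false, and with it the orbit ``trichotomy'' you build to handle the conjugation clause. Test it on $\hat B_1=(w_1+w_2)\,g(z)^t$. Both components are non-zero, yet $w_1+w_2=(1,-1,1,i)^t$ has real and imaginary parts $(1,-1,1,0)^t$ and $(0,0,0,1)^t$, which form an orthonormal spacelike pair in $\R^{1,3}$, exactly as $(0,0,1,0)^t$ and $(0,0,0,1)^t$ do for $w_2$; by Witt's theorem (adjusting by an element of the $O(1,1)$ fixing that pair so as to land in the identity component) there is a \emph{constant real} $L\in SO^+(1,3)$ with $L(w_1+w_2)=w_2$. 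Hence $\mathrm{diag}(L,I_n)\in K$ conjugates $\hat B_1$ into the first form of \eqref{eq-w-min-2}, the potential takes values in $\Lambda\mathfrak{so}(n+2,\C)_{\sigma}$ (the stabilizer of a fixed Lorentzian real $2$-plane), $f$ reduces to a harmonic map into $SO(n+2)/SO(2)\times SO(n)$, and $f$ is \emph{not} a conformal Gauss map. This is not a borderline case: it is precisely the computation the paper itself makes in Section 4.3 (``if $\langle\mathrm{v}_0,\mathrm{v}_0\rangle\neq0$, there exists $L_2\in SO^+(1,3)$ such that $L_2\mathrm{v}_0\in \hbox{Span}_{\C}\{(0,0,1,i)^t\}$''), which is why vacuum solutions are never conformal Gauss maps. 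For the same reason your proposed trichotomy cannot hold: under conjugation by constant elements of the real group $K=SO^+(1,3)\times SO(n)$ (rigid motions, the conjugations the corollary refers to), the isotropic lines of $\C^{1,3}$ form exactly \emph{two} orbits—real lines (the orbit of $[w_1]$) and non-real lines (the orbit of $[w_2]$, which absorbs every $[aw_1+bw_2]$ with $b\neq0$). Shrinking the admissible group to a torus so that a third ``generic'' stratum survives protects the wrong statement instead of proving the right one.

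The root of the error is that you froze the column line: rank one only gives $\hat B_1=v(z)g(z)^t$ with a \emph{meromorphic} line $[v(z)]\subset P$, not a fixed vector, and the invariant that constant conjugation cannot alter—and which actually decides the question—is whether $z\mapsto[v(z)]$ is constant. That is how the paper argues, in a top-down way: by Theorem 3.11 of \cite{DoWa11}, $f$ fails to be a conformal Gauss map iff $f$ reduces to a harmonic map into $SO^+(1,n+1)/SO^+(1,1)\times SO(n)$ or $SO(n+2)/SO(2)\times SO(n)$; by Wu's formula (Theorem \ref{Wu-W}) such a reduction is equivalent to $\eta$ being conjugate into $\Lambda\mathfrak{so}(1,n+1,\C)_{\sigma}$ or $\Lambda\mathfrak{so}(n+2,\C)_{\sigma}$, i.e.\ to all values of $\eta_{-1}$ annihilating one fixed nondegenerate real $2$-plane, which must then be orthogonal to every $v(z)$. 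If $[v(z)]$ is constant this is always possible (a real line gives the second form of \eqref{eq-w-min-2}, a non-real line the first form after a real rotation); if $[v(z)]$ moves, its values span the totally isotropic plane $P=P^{\perp}$, no nondegenerate plane is orthogonal to $P$, no reduction exists, and $f$ is the conformal Gauss map of a minimal surface in $\R^{n+2}$. So the correct dichotomy is ``constant column line versus moving column line,'' not ``one component vanishes versus both survive''; the latter is asserted in your proposal without proof, and it is false.
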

\begin{proof}  If $f$ can not be
the conformal Gauss map of a Willmore surface, then   by Theorem 3.11 of \cite{DoWa11},
it reduces to a harmonic map into $SO^+(1,n+1)/SO^+(1,1)\times SO(n)$ or $SO(n+2)/SO(2)\times SO(n)$. As a consequence, applying Wu's formula we see that the normalized potential reduces to $\Lambda \mathfrak{so}(1,n+1,\C)_{\sigma}$ or $\Lambda \mathfrak{so}(n+2,\C)_{\sigma}$. Now \eqref{eq-w-min-2} follows.
 The converse part is also straightforward. Since $\eta$ has the form stated in  \eqref{eq-w-min-2}, clearly  $f$  reduces to $SO^+(1,n+1)/SO^+(1,1)\times SO(n)$ or $SO(n+2)/SO(2)\times SO(n)$.
\end{proof}

\subsection{The conformal Gauss map of isotropic Willmore surfaces in $S^4$}

Another important class of Willmore surfaces is formed by the {\em totally isotropic} Willmore surfaces.
We recall from section 2 that $D$ denotes the $V_\C^\perp-$part
of the natural connection of $\C^4$. By $D_z^j$ we denote the $j-$fold iteration of $D_z$.

\begin{definition}(\cite{Ca}, \cite{Bryant1982}, \cite{Ejiri1988}) Let $y:M\rightarrow S^{n+2}$ be a conformal immersion with  $z$ a local coordinate of $M$ and $Y$ a  local lift. Then $y$ is called {\em totally isotropic} if the Hopf differential $\kappa$ of $y$ satisfies
 \begin{equation} \langle D_{z}^{j}\kappa,  D_{z}^{l}\kappa \rangle = 0, \hbox{ for } j,\ l=0,\ 1,\cdots.\end{equation}
\end{definition}
 Note that full  and totally isotropic surfaces only exist  in even dimensional spheres $S^{2m}$. They can be described as projections of holomorphic (anti-holomorphic) curves in the twistor bundle $\mathfrak{T}S^{2m}$ of $ S^{2m}$ (\cite{Ca}, \cite{Ejiri1988}).

However, in general, totally isotropic surfaces in $S^{2m}$ are not necessarily Willmore surfaces when $m>2$. Thus totally isotropic Willmore surfaces in $S^{2m}$ are of particular interest.
A much larger class of surfaces is formed by the isotropic surfaces, i.e. the surfaces, where in the definition above only the case $j = l = 0$ is required. Fairly little is known  about general isotropic  surfaces.

However, it is well-known that  all isotropic surfaces in $S^4$ are  Willmore surfaces (even S-Willmore surfaces), see \cite{Ejiri1988}, \cite{Ma2006}.

In this subsection we will characterize all isotropic Willmore surfaces in $S^4$.  An analysis of isotropic Willmore surfaces in $S^6$ will be presented in \cite{Wang-iso}. Concerning isotropic (Willmore) surfaces in $S^4$, we show
\begin{theorem}\label{th-potential-iso}
Let $y:M\rightarrow S^4$ be an isotropic surface from a simply connected Riemann surface $\tilde{M}$, with its conformal Gauss map $f=Gr$ defined in Section 2. Then the normalized potential of $Gr$ is of the form
\begin{equation}\label{eq-isotropic}\eta=\lambda^{-1}\left(
                     \begin{array}{cc}
                       0 & \hat{B}_1 \\
                       -\hat{B}^{t}_1I_{1,3} & 0 \\
                     \end{array}
                   \right)\dd z, \hbox{ with }
\hat{B}_{1}=\left(
                                                                       \begin{array}{cccc}
                                                                         \hat{f}_{11} & i\hat{f}_{11}   \\
                                                                          \hat{f}_{21} & i \hat{f}_{21}   \\
                                                                          \hat{f}_{31} &i\hat{f}_{31}   \\
                                                                         \hat{f}_{41} & i\hat{f}_{41}  \\
                                                                       \end{array}
                                                                     \right),\ -\hat{f}_{11} ^2+\hat{f}_{21} ^2+\hat{f}_{31} ^2+\hat{f}_{41} ^2=0.
\end{equation}
Moreover, $Gr$ is of finite uniton type with uniton number $r(f)$ at most 2. In particular, $f$ is $S^1$-invariant.

Conversely, let $\eta$ be defined on $\tilde{M}$ of the form \eqref{eq-isotropic} and let $f: \tilde{M}\rightarrow SO^+(1,5)/SO^+(1,3)\times SO(2)$ be the associated strongly conformally harmonic map. Then either $f$ is the conformal Gauss map of an isotropic S-Willmore surface in $S^{4}$, or $f$ takes values in   $SO^+(1,3)/SO^+(1,1)\times SO(2)$ or in $SO(4)/SO(2)\times SO(2)$ and is not the conformal Gauss map of any conformal immersion.
\end{theorem}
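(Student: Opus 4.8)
I would handle the two directions separately, deriving the explicit potential from Wu's formula (Theorem~\ref{Wu-W}) and the converse from the reduction criterion of \cite{DoWa11}. For the forward direction, note first that every isotropic surface in $S^4$ is $S$-Willmore, hence Willmore, by \cite{Ejiri1988,Ma2006}, so the conformal Gauss map $f=Gr$ is strongly conformally harmonic (Theorem~\ref{thm-willmore}) and Theorems~\ref{normalized-potential-W} and \ref{Wu-W} apply with $n=2$. I would read off from \eqref{eq-b1} that the two columns of $B_1$ are $c_j=(\sqrt2\beta_j,-\sqrt2\beta_j,-k_j,-ik_j)^{t}$, where $\kappa=k_1\psi_1+k_2\psi_2$ and $D_{\bar z}\kappa=\beta_1\psi_1+\beta_2\psi_2$. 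Isotropy $\langle\kappa,\kappa\rangle=k_1^2+k_2^2=0$ gives $k_2=\epsilon i\,k_1$ with $\epsilon=\pm1$, and differentiating $\langle\kappa,\kappa\rangle\equiv0$ in $\bar z$ yields $\langle D_{\bar z}\kappa,\kappa\rangle=\beta_1k_1+\beta_2k_2=0$, hence $\beta_2=\epsilon i\,\beta_1$ (at points with $k_1\neq0$, and then everywhere by meromorphic continuation). Thus $c_2=\epsilon i\,c_1$, so $B_1=c_1\,(1,\epsilon i)$ has rank one; after reorienting $\tilde M$ if needed (Theorem~\ref{th-Willmore-harmonic}) I take $\epsilon=1$.

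Passing to holomorphic parts, $\widetilde B_1=\widetilde c_1\,(1,i)$, and Wu's formula gives $\hat B_1=\hat A_1\widetilde B_1\hat A_2^{-1}=(\hat A_1\widetilde c_1)\big((1,i)\hat A_2^{-1}\big)$. The decisive point is that $\hat A_2\in SO(2,\mathbb{C})$ preserves each isotropic line $\mathbb{C}(1,\pm i)$, so $(1,i)\hat A_2^{-1}=\nu\,(1,i)$ for a scalar $\nu$; writing $w=\nu\,\hat A_1\widetilde c_1=:(\hat f_{11},\hat f_{21},\hat f_{31},\hat f_{41})^{t}$ yields $\hat B_1=w\,(1,i)$, which is exactly \eqref{eq-isotropic}. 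The null condition of Theorem~\ref{normalized-potential-W} becomes $\hat B_1^{t}I_{1,3}\hat B_1=(w^{t}I_{1,3}w)\,(1,i)^{t}(1,i)=0$, i.e. $-\hat f_{11}^2+\hat f_{21}^2+\hat f_{31}^2+\hat f_{41}^2=0$. Finally $(1,i)(1,i)^{t}=0$ forces $\hat B_1\hat B_1^{t}=0$, and together with the null condition this gives $\eta_{-1}^2=0$; the pointwise second-order nilpotency places $f$ among the $S^1$-invariant harmonic maps and bounds the uniton number by $2$ (Burstall--Guest theory \cite{BuGu}, see also \cite{Do-Es}), which I would confirm by exhibiting the extended frame as a Laurent polynomial of degree $\le2$ in $\lambda$.

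For the converse, given $\eta$ of the form \eqref{eq-isotropic}, the converse part of Theorem~\ref{normalized-potential-W} integrates it to a strongly conformally harmonic $f:\tilde M\to SO^+(1,5)/SO^+(1,3)\times SO(2)$. By Theorem~\ref{th-Willmore-harmonic}, after possibly reorienting, either $f$ is the oriented conformal Gauss map of a unique conformal Willmore immersion $y$, or $f$ degenerates. In the first case I would check that the $(1,i)$-column structure (equivalently $\hat B_1\hat B_1^{t}=0$) is preserved under the Iwasawa splitting, so the reconstructed Hopf differential satisfies $\langle\kappa,\kappa\rangle=0$ and $y$ is isotropic, hence $S$-Willmore. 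To separate the cases I would invoke Theorem~3.11 of \cite{DoWa11}: the obstruction to $f$ being a genuine conformal Gauss map is precisely that it reduces to a harmonic map into $SO^+(1,3)/SO^+(1,1)\times SO(2)$ or $SO(4)/SO(2)\times SO(2)$, exactly as in the degenerate cases \eqref{eq-w-min-2}, and in those cases the same theorem shows $f$ is not the conformal Gauss map of any conformal immersion. This yields the stated trichotomy.

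The cleanest part is the column computation together with Wu's formula; the hard part is the converse, namely deciding from the holomorphic data $\hat f_{i1}$ when the (non-global) Iwasawa reconstruction degenerates into one of the two totally geodesic subspaces, and then proving in those cases that no underlying conformal immersion exists. I expect this to require the full strength of Theorem~3.11 of \cite{DoWa11} together with control of the boundaries of the two open Iwasawa cells.
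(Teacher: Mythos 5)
Your proposal follows the paper's own route almost step for step: isotropy forces $D_{\bar z}\kappa$ to be parallel to $\kappa$, so $B_1$ has the rank-one $(c_1,ic_1)$ structure; Wu's formula (Theorem \ref{Wu-W}) plus the fact that $SO(2,\mathbb{C})$ preserves the isotropic line $\mathbb{C}(1,i)^t$ yields $\hat B_1=w(1,i)$ with $w^tI_{1,3}w=0$; and the converse is run through Theorem 3.11 of \cite{DoWa11} together with the observation that the $(v_1,iv_1)$-form is preserved under conjugation by $K^{\mathbb{C}}$, exactly as in the paper's proof.

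There is, however, one step whose stated justification is precisely the fallacy the paper itself warns against. You claim that ``the pointwise second-order nilpotency places $f$ among the $S^1$-invariant harmonic maps and bounds the uniton number by $2$.'' Pointwise nilpotency of $\eta_{-1}$ is not sufficient: by the Remark following Theorem \ref{Wu-W}, \emph{every} normalized potential of a strongly conformally harmonic map satisfies $\eta_{-1}^3=0$ pointwise, and yet such maps are in general not of finite uniton type (the Clifford torus is the counterexample given there). What is needed, and what the paper's proof invokes, is that all values of $\eta_{-1}$ lie in one \emph{fixed} nilpotent subalgebra. In your situation this does hold and is easy to extract from your own computation: writing $\rho=(1,i)$ and
\begin{equation*}
X_w=\begin{pmatrix} 0 & w\rho\\ -\rho^t w^t I_{1,3} & 0\end{pmatrix},\qquad w\in\mathbb{C}^4,
\end{equation*}
the identity $\rho\rho^t=0$ gives
$X_wX_{w'}=\begin{pmatrix} 0 & 0\\ 0 & -(w^tI_{1,3}w')\,\rho^t\rho\end{pmatrix}$,
hence $[X_w,X_{w'}]=0$ by symmetry of the bilinear form, and all triple products $X_wX_{w'}X_{w''}$ vanish. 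So the matrices of the shape \eqref{eq-isotropic} (for arbitrary $w$, null or not) span a fixed abelian subalgebra, nilpotent of degree $2$; consequently $F_-(z,\lambda)=\exp\left(\lambda^{-1}X_{W(z)}\right)$ with $W(z)=\int_0^z w\,\dd z'$, which is a Laurent polynomial of degree at most $2$ in $\lambda^{-1}$. This is exactly the confirmation you deferred, and with it inserted the forward direction is complete. A minor further point: the continuation giving $\beta_2=\epsilon i\beta_1$ off the set $\{k_1\neq 0\}$ is real-analytic continuation, not ``meromorphic'' continuation, since the coefficients $k_j,\beta_j$ of the Maurer--Cartan form are only real-analytic in $z,\bar z$.
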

\begin{proof} Retaining the notation of Section 2.1  for $y$ and $Gr$, the isotropy property of $y$ shows that $\langle\kappa,\kappa\rangle=0$. Differentiating  this expression for $z$ one obtains
$\langle D_{\bar{z}}\kappa,\kappa\rangle=0.$
Noticing that the complex normal bundle is 2 dimensional and observing  that $\kappa$ is a  null vector section, it is clear that any other section perpendicular to $\kappa$  is necessarily  parallel to $\kappa$.  Hence we infer  that $D_{\bar{z}}\kappa$ is parallel to $\kappa$.
So without loss of generality, we can assume
\[\kappa=k_1\psi_1+ik_1\psi_2, \hbox{ and } D_{\bar{z}}\kappa=\beta_1\psi_1+i\beta_1\psi_2,\]
with $\psi_1,$ $\psi_2$ an orthonormal basis of sections of $V^{\perp}$ in the sense of Section 2.
Therefore the Maurer-Cartan form of $F(z,\bar{z},\lambda)$ w.r.t $y$ is\\
\[ F^{-1}F_z=\left(
                   \begin{array}{cc}
                     A_1 & B_1 \\
                     -B_1^tI_{1,3} & A_2 \\
                   \end{array}
                 \right),\
\hbox{ with }\
B_1=\left(
                                                                       \begin{array}{cccc}
                                                                          \sqrt{2}\beta_1 &  i\sqrt{2}\beta_1  \\
                                                                         -\sqrt{2}\beta_1 & -i\sqrt{2}\beta_1  \\
                                                                         -k_{1} & -ik_{1}   \\
                                                                         -ik_{1} & -ik_{1}  \\
                                                                       \end{array}
                                                                     \right).\]
To apply Wu's formula  (Theorem \ref{Wu-W}),
let  $\delta_{1}$, $\delta_{2}$ and  $\tilde{B}_1$  denote the ``holomorphic parts" of $ A_{1}$, $A_2$ and $B_1$ with respect to the reference point $z=0$ respectively, i.e., the part of the Taylor expansion of $A_1$, $A_2$ and $B_1$ which are independent of $\bar{z}$.
Let $F_{01}$ and $F_{02}$ be the solutions to the equations $F_{01}^{-1}\dd F_{01}=\delta_{1}\dd z,\  F_{01}|_{z=0}=I_4$ and
$F_{02}^{-1}\dd F_{02}=\delta_{2}\dd z,\  F_{02}|_{z=0}=I_2$ respectively.
By Wu's formula (Theorem \ref{Wu-W}), the normalized potential can be represented in the form
\[\eta=\lambda^{-1}\left(
                     \begin{array}{cc}
                       0 & \hat{B}_1 \\
                       -\hat{B}^{t}_1I_{1,3} & 0 \\
                     \end{array}
                   \right)\dd z,\ ~~\hbox{ with }\ ~~\hat{B}_{1}=F_{01}\tilde{B}_{1}F_{02}^{-1}.
\]
Noticing that here $\tilde{B}_1$ is of the form $\tilde{B}_1=\left(v_1, iv_1\right),\ \hbox{ with } \  v_1\in \mathbb{C}^4_1,$  it is immediate
to check that $F_{01}\tilde{B}_1 F_{02}^{-1}=\left(\hat{v}_1, i\hat{v}_1\right)$
holds with some vector $\hat{v}_1 \in \C^4$.

A straightforward computation shows that $\hat{B}_1^t I_{1,3}\hat{B}_1=0$ is equivalent with $  \hat{v}_1^tI_{1,3}\hat{v}_1=0$, and
\eqref{eq-isotropic} follows now.
In view of the definition of harmonic maps of finite uniton type \cite{BuGu,DoWa2,Uh}, the last statement is a corollary to the fact that $\eta(\frac{\partial}{\partial z})$ in \eqref{eq-isotropic} takes values in a nilpotent Lie subalgebra of
 degree of nilpotency $2$,  which shows that $F_-$ will be a polynomial in $\lambda^{-1}$ of degree at most $2$.

As to the converse part, assume that $\hat B_1=(\hat{v}_1,i\hat{v}_1)$. Let $f$ be the corresponding harmonic map with $B_1=(v_1,v_2)$. Then we have $B_1=V_{01}\hat{B}_{1}V_{02}^{-1}$ for some $V_{01}\in SO^+(1,3,\C)$ and $V_{02}\in SO(2,\C)$. So we have $v_2=iv_1$ holds. In particular $rank B_1\leq 1$. Applying Theorem 3.11 of \cite{DoWa11}, we see the Theorem holds except the isotropic property. But this is a consequence of the facts that $B_1$ being of the form $(v_1,iv_1)$ is independent of the choice of frames, and $B_1$ being of the form $(v_1,iv_1)$ is equivalent to the condition that the corresponding Willmore surface is isotropic (Note that these facts only hold in the case of codimension $2$).
\end{proof}

\begin{remark} \
\begin{enumerate}
\item Isotropic surfaces in $S^4$ provide another type of strongly conformally harmonic maps of finite uniton number $\leq2$,
which actually have an intersection with minimal surfaces in $\mathbb{R}^4$ (see e.g. the examples below). For more details, we refer to \cite{Mon}.
 And also note that  a Weierstrass type representation for isotropic minimal surfaces in $S^4$  has been presented in \cite{Bryant1982}.

 \item The case of isotropic Willmore surfaces in $S^6$ shows a very different situation, in particular by the fact that, in general, they can not be of finite uniton type \cite{Wang-iso}.
\end{enumerate}\end{remark}

By the classification theorems in \cite{Ejiri1988}, \cite{Mus1}, \cite{Mon},  a Willmore two-sphere in $S^4$ is either isotropic or is conformally equivalent to a minimal surface in $\mathbb{R}^4$. Applying Theorem \ref{th-potential-light} and Theorem \ref{th-potential-iso}, we obtain
\begin{corollary}The conformal Gauss map of a Willmore two-sphere in $S^4$ is of finite uniton type with $r(y)\leq2$ and hence is $S^1$-invariant.
\end{corollary}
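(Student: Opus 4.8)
The plan is to combine the classification of Willmore two-spheres in $S^4$ with the two normalized-potential descriptions already proved. By \cite{Ejiri1988}, \cite{Mus1} and \cite{Mon}, a Willmore two-sphere $y:S^2\to S^4$ is either totally isotropic or conformally equivalent to a minimal surface in $\mathbb{R}^4$. Since the conformal Gauss map $Gr$ is equivariant under conformal transformations of $S^4$, the properties in question (finite uniton type, the bound $r(y)\le 2$, and $S^1$-invariance) depend only on the conformal class of $y$; so I would verify the statement in each of these two cases, working with a representative surface.

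In the isotropic case there is nothing further to do. Here $y$ is an isotropic surface from the simply connected surface $S^2$, so Theorem \ref{th-potential-iso} applies directly and already asserts that $Gr$ is of finite uniton type with uniton number at most $2$ and, in particular, is $S^1$-invariant.

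The minimal case is where the work lies. If $y$ is conformal to a minimal surface in $\mathbb{R}^4$ then, as recalled at the start of Section 4.1, its conformal Gauss map is a strongly conformally harmonic map into $SO^+(1,5)/SO^+(1,3)\times SO(2)$ which contains a constant lightlike vector. Theorem \ref{th-potential-light} (with $n=2$) therefore applies: $Gr$ is of finite uniton type, it is $S^1$-invariant by the remark following that theorem, and its normalized potential has the explicit shape \eqref{eq-w-minimal}, $\eta=\lambda^{-1}\eta_{-1}\,\dd z$. What remains is to promote the qualitative ``finite uniton type'' to the sharp bound $r(y)\le 2$, and for this I would read the bound off $\eta_{-1}$ directly. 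The key point to check is the cancellation identity $\hat B_1(s)^tI_{1,3}\hat B_1(u)=0$ for all $s,u$: in the product $\hat B_1(s)^tI_{1,3}\hat B_1(u)$ the contribution of the first two rows of $\hat B_1$ cancels against itself because of the $(+,-)$ pattern together with the leading $-1$ in $I_{1,3}$, while the contribution of the last two rows cancels because of the factor $i$ in the fourth row. A short block computation using this identity then shows that every triple product $\eta_{-1}(s)\,\eta_{-1}(u)\,\eta_{-1}(w)$ vanishes identically, so the values of $\eta_{-1}$ lie in a nilpotent associative algebra in which all products of length three are zero.

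Given this, the conclusion is immediate: the Picard iteration for the solution $F_-$ of $F_-^{-1}\,\dd F_-=\lambda^{-1}\eta_{-1}\,\dd z$, $F_-(z_0)=\ee$, terminates after its second step, so $F_-$ is a polynomial in $\lambda^{-1}$ of degree at most $2$, which is exactly the assertion that $Gr$ has uniton number $r(y)\le 2$. Together with the $S^1$-invariance furnished in both cases by Theorems \ref{th-potential-iso} and \ref{th-potential-light}, this proves the corollary. I expect the main obstacle to be precisely this last step in the minimal case, namely verifying the identity $\hat B_1(s)^tI_{1,3}\hat B_1(u)=0$ and the resulting vanishing of all length-three products of $\eta_{-1}$; the isotropic case is governed by the analogous mechanism $\hat B_1=\hat v_1\,(1,i)$ with $(1,i)(1,i)^t=0$, but there it is already absorbed into Theorem \ref{th-potential-iso}.
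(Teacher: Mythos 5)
Your proposal is correct and follows the paper's own route: the Ejiri--Musso--Montiel classification splits the problem into the isotropic case, handled by Theorem \ref{th-potential-iso}, and the minimal-surface case, handled by Theorem \ref{th-potential-light}. The extra step you supply in the minimal case --- the bilinear identity $\hat B_1(s)^tI_{1,3}\hat B_1(u)=0$ forced by the row pattern of \eqref{eq-w-minimal}, hence the vanishing of all length-three products of $\eta_{-1}$ and the resulting bound that $F_-$ is polynomial of degree at most $2$ in $\lambda^{-1}$ --- is sound, and it usefully makes explicit the bound $r(y)\le 2$ which the paper leaves implicit in Theorem \ref{th-potential-light} (stated there only as ``finite uniton type'', with details deferred to \cite{Wang-Min}).
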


 A main result of \cite{Le-Pe-Pin} states that a Willmore torus in $S^4$ with non-trivial normal bundle is either isotropic or is conformally equivalent to a minimal surface in $\mathbb{R}^4$.
Together with the above results we derive the following
\begin{corollary}The conformal Gauss map of a Willmore torus in $S^4$ with non-trivial normal bundle is of finite uniton number at most 2 and hence $S^1$-invariant.
\end{corollary}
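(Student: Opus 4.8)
The plan is to split along the dichotomy furnished by \cite{Le-Pe-Pin} and then feed each case into the potential theorems already proved. By the main result of \cite{Le-Pe-Pin}, a Willmore torus $y\colon M\to S^4$ with non-trivial normal bundle is either totally isotropic or conformally equivalent to a minimal surface in $\mathbb{R}^4$. Since finite uniton type and $S^1$-invariance of the conformal Gauss map $f=Gr_y$ can be read off from the normalized potential on the universal cover, I would first pass to $\tilde M\cong\mathbb{C}$ and work with the pulled-back potential there; the closing conditions needed to descend to the torus are irrelevant for these two properties.

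If $y$ is isotropic, the conclusion is immediate from Theorem \ref{th-potential-iso} applied to $\tilde M$: the normalized potential of $f$ has the form \eqref{eq-isotropic}, and that theorem already gives that $Gr$ is of finite uniton type with $r(f)\le 2$, hence $S^1$-invariant.

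If instead $y$ is conformal to a minimal surface in $\mathbb{R}^4$, then, as recalled just before Theorem \ref{th-potential-light}, its conformal Gauss map contains a constant lightlike vector, so Theorem \ref{th-potential-light} applies and produces a normalized potential of the form \eqref{eq-w-minimal} with $n=2$; it also guarantees finite uniton type and $S^1$-invariance. To sharpen ``finite uniton type'' to the explicit bound $r(f)\le 2$, I would exploit the special codimension-two shape of $\hat B_1$ in \eqref{eq-w-minimal}: for $n=2$ the second row of $\hat B_1$ is the negative of the first and the fourth row is $i$ times the third, so every column of $\hat B_1(z)$ lies in the fixed totally isotropic plane spanned by $(1,-1,0,0)^t$ and $(0,0,1,i)^t$ in $\mathbb{C}^4_1$. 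This forces $\hat B_1(z_1)^t I_{1,3}\hat B_1(z_2)=0$ for all $z_1,z_2$, whence every triple product $\eta_{-1}(z_1)\eta_{-1}(z_2)\eta_{-1}(z_3)$ of values of $\eta_{-1}$ vanishes. Solving $F_-^{-1}\dd F_-=\eta$ by iterated integration then shows that $F_-$ is a polynomial in $\lambda^{-1}$ of degree at most $2$, which is exactly the statement $r(f)\le 2$; $S^1$-invariance follows as usual.

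The step I expect to be the main obstacle is this last computation in the minimal case. Theorem \ref{th-potential-light} asserts only finite uniton type (for arbitrary $n$) together with $S^1$-invariance, without the numerical bound, and the bound genuinely uses the codimension-two structure: it is precisely the vanishing of the mixed products $\hat B_1(z_1)^t I_{1,3}\hat B_1(z_2)$ that collapses the degree of $F_-$ to $2$. One must check carefully that this vanishing, automatic for the fixed isotropic plane available when $n=2$, really does eliminate all powers $\lambda^{-k}$ with $k\ge 3$ in the iterated-integral expansion of $F_-$; this is the analogue of the nilpotency bookkeeping inside the proof of Theorem \ref{th-potential-iso}, and it is exactly the feature that fails in higher codimension (compare the $S^6$ situation, where isotropic Willmore surfaces need not be of finite uniton type).
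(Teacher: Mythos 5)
Your proposal is correct and takes essentially the same route as the paper: the paper derives this corollary in one line by combining the Leschke--Pedit--Pinkall dichotomy with Theorem \ref{th-potential-iso} (isotropic case) and Theorem \ref{th-potential-light} together with its following remark (minimal case), and your iterated-integral nilpotency argument merely makes explicit the bound $r(f)\le 2$ in the minimal case, which the paper leaves to that remark and the cited literature. The one inaccuracy is your closing caveat: the argument does \emph{not} genuinely use codimension two, since for every $n$ the columns of $\hat B_1$ in \eqref{eq-w-minimal} lie in the fixed isotropic plane spanned by $(1,-1,0,0)^t$ and $(0,0,1,i)^t$, so $\hat B_1(z_1)^t I_{1,3}\hat B_1(z_2)=0$ and the degree bound on $F_-$ hold for all potentials of that form; what fails in $S^6$ is the \emph{isotropic} case (the analogue of Theorem \ref{th-potential-iso}, whose potential \eqref{eq-isotropic} generalizes differently), not the constant-lightlike-vector case.
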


We can also state  the classification theorem of Bohle \cite{Bo} on Willmore tori in $S^4$ as below. For the notion of ``finite type'' we refer to \cite{Bo}.

\begin{corollary}\cite{Bo} The conformal Gauss map of a Willmore torus in $S^4$ is either of finite type or of finite uniton number at most 2 (and hence is $S^1$-invariant in the latter case).
\end{corollary}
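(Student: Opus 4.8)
The plan is to organize the argument around the normal bundle of the torus, since the two alternatives in the statement correspond geometrically to a dichotomy already built into the earlier results. Write $f = Gr : T^2 \to SO^+(1,5)/SO^+(1,3)\times SO(2)$ for the conformal Gauss map of a Willmore torus $y : T^2 \to S^4$; by Theorem \ref{thm-willmore} this $f$ is a strongly conformally harmonic map, and since $T^2$ is compact of positive genus we lift everything to the universal cover $\mathbb{C}$ and work with potentials as in the previous sections. The first step is to record that the invariant separating the two cases is the normal Euler number of $y$, i.e.\ whether the normal bundle is trivial or not.

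First I would dispose of the non-trivial normal bundle case. By the result of \cite{Le-Pe-Pin} recalled above, a Willmore torus in $S^4$ with non-trivial normal bundle is either totally isotropic or conformally equivalent to a minimal surface in $\mathbb{R}^4$. In the isotropic case Theorem \ref{th-potential-iso} shows directly that the normalized potential of $f$ is nilpotent of degree $2$, so $F_-$ is polynomial of degree at most $2$ in $\lambda^{-1}$ and $f$ has uniton number at most $2$; in the Euclidean-minimal case $f$ contains a constant light-like vector, and Theorem \ref{th-potential-light} shows that $f$ is again of finite uniton type with the same bound. In both subcases the uniton number is at most $2$, whence $f$ is $S^1$-invariant. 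This is exactly the second alternative, and it is precisely the content of the preceding corollary.

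It remains to treat the case of trivial normal bundle, where the goal is to produce the finite-type alternative. The idea is that for a harmonic torus the extended frame carries a formal polynomial Killing field, and the map is of finite type precisely when such a Killing field of finite degree exists, equivalently when the associated spectral curve has finite genus. For Willmore (indeed constrained Willmore) tori in $S^4$ this is exactly the dichotomy established by Bohle \cite{Bo}: the spectral curve is either of finite genus, yielding finite type, or it degenerates, and the degenerate case is precisely the isotropic/Euclidean-minimal family already handled in the previous paragraph. Thus, away from the finite-uniton branch, $f$ is of finite type, which completes the stated dichotomy; the residual overlap (tori that are simultaneously of finite type and finite uniton) is harmless, since the statement is an inclusive alternative.

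The main obstacle is the finite-type alternative itself. Everything in the finite-uniton branch is self-contained within the loop-group framework of this paper, via the explicit normalized potentials of Theorems \ref{th-potential-light} and \ref{th-potential-iso}, together with the nilpotency argument that pins the uniton number to $2$ and forces $S^1$-invariance. By contrast, proving that every Willmore torus not lying in that branch is of finite type requires the spectral-curve analysis of \cite{Bo}: one must show that the spectral curve of such a torus in $S^4$ has finite arithmetic genus unless it degenerates. This is the deep input that the present corollary imports rather than reproves; the contribution here is to \emph{match} Bohle's degenerate/finite-uniton case with an explicit uniton bound of $2$ and with $S^1$-invariance, thereby translating his classification into the loop-group language used throughout.
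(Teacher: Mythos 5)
Your proposal is correct and follows essentially the same route as the paper: the corollary is simply Bohle's classification \cite{Bo} imported as the deep input, with the exceptional (isotropic or Euclidean-minimal) tori translated into the loop-group framework via Theorems \ref{th-potential-light} and \ref{th-potential-iso} to get uniton number at most $2$ and $S^1$-invariance. Your preliminary case split by normal bundle using \cite{Le-Pe-Pin} is redundant (Bohle's dichotomy already covers all Willmore tori in $S^4$, and the uniton bound for the isotropic/minimal cases does not depend on the normal bundle), but it is harmless.
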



\subsection{On homogeneous Willmore surfaces admitting an Abelian group action}

\begin{definition}
A Willmore  immersion $y: M \rightarrow S^{n+2}$ is called homogeneous if there exists a group
$\Gamma:=\{(\gamma,R_{\gamma}):\ \gamma\in Aut(M),\ R\in SO^+(1,n+3)\}$ such that
\begin{equation}y(\gamma\cdot z)=R_{\gamma}\cdot y(z), \hbox{ for all } z\in M \hbox{ and } \ (\gamma,R_{\gamma})\in \Gamma
\end{equation}
and  the projection $\Gamma_M$ of $\Gamma$ onto the first factor acts transitively on $M$.
\end{definition}

 Clearly, with  $\Gamma$ also the closure
$\overline{\Gamma}$ in  $Aut(M)  \times SO^+(1, n+3)$ satisfies the conditions of the definition. We can thus assume that $\Gamma$ is a Lie group. Of course, the closure
$\overline{\Gamma_M}$ in $Aut(M)$ is  transitive on $M$.

In \cite{DoWaHom} we classifiy all homogeneous Willmore surfaces in spheres.
Here we describe how one can construct explicitly all homogeneous Willmore surfaces which
have an abelian transitive group action.

\begin{theorem}
 Let $y:M \rightarrow S^{n+2}$ be a homogeneous Willmore immersion from a Riemann surface $M$.
Assume that the group $\Gamma$ is abelian. Then
\begin{enumerate}
\item  $M = \C$ and $\Gamma_M \cong$  all translations; or $M =S^1\times \R$ and
$\Gamma_M \cong S^1\times \mathbb{Z}$; or $M = \mathbb{T}$ and
$\Gamma_M \cong \mathbb{Z}^2$.

 \item  For the lift $\tilde{y}$ of $y$ to the universal cover $\C$ of $M$ there exists an  extended frame associated with the conformal Gauss map of $\tilde{y}$, which has a  constant
 Maurer-Cartan form $\alpha$ of the form stated in Section 2.1, \eqref{eq-MC-1}--\eqref{eq-b1}.
      Moreover, setting $\eta=\alpha'= \lambda^{-1} \eta_{-1} + \eta_0$, then $\eta$ is a constant, real, holomorphic potential, which generates the immersion
    $\tilde{y}$ (and hence also $y$)  and satisfies $\lbrack \eta \wedge \bar{\eta} \rbrack = 0$ and $\eta_{013} + \eta_{023} \neq 0$. Here $\eta_{013}$ and $\eta_{023}$ denote the (1,3) and (2,3) term of  $\eta_0(\frac{\partial}{\partial z})$

 \item Conversely, let $\eta$ be a constant real potential,  defined on $\C$ and  of the form $\eta = \lambda^{-1} \eta_{-1} + \eta_0$ satisfying $\lbrack \eta \wedge \bar{\eta} \rbrack = 0 $ and having the same form as $\alpha'$ in Section 2.1, \eqref{eq-MC-1}--\eqref{eq-b1}. Then it generates a homogeneous Willmore immersion (without branch points) for which its conformal Gauss map has an extended frame with constant Maurer-Cartan form.
\end{enumerate}
\end{theorem}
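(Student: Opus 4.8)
The plan is to prove the three assertions in sequence, since each relies on the structure established by the previous one together with the loop group machinery from Section 3.

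For part (1), I would begin from the definition of homogeneity: the projection $\overline{\Gamma_M}$ acts transitively on $M$, so $M$ is a homogeneous Riemann surface under a connected abelian Lie group. Up to the universal cover $\tilde{M}\in\{S^2,\C,\E\}$, a transitive abelian action forces $\tilde{M}=\C$ (the sphere $S^2$ admits no transitive abelian action by conformal automorphisms, and $\E$ only admits transitive actions through its non-abelian automorphism group). The abelian transitive subgroups of the translation group of $\C$ are then classified by their orbit-stabilizer structure: either all translations (giving $M=\C$), a rank-one lattice (giving the cylinder $M=S^1\times\R$ with $\Gamma_M\cong S^1\times\mathbb{Z}$), or a rank-two lattice (giving the torus $M=\mathbb{T}$ with $\Gamma_M\cong\mathbb{Z}^2$). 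This is the most self-contained step.

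For part (2), the key idea is that homogeneity of $y$ forces the conformal Gauss map $f=Gr$ to be equivariant under $\Gamma$, hence its geometric invariants, the conformal Hopf differential $\kappa$ and the Schwarzian $s$, are invariant under the transitive abelian action. I would choose a global complex coordinate $z$ on $\tilde{M}=\C$ adapted to the translation action so that $\Gamma_M$ acts by $z\mapsto z+c$. Equivariance and transitivity then force the Maurer-Cartan form $\alpha$ of a suitably normalized extended frame to be \emph{constant} (translation-invariant), and by Section 2.1 it has the special form \eqref{eq-MC-1}--\eqref{eq-b1}. Setting $\eta=\alpha'=\lambda^{-1}\eta_{-1}+\eta_0$, the holomorphicity in $z$ is automatic from constancy. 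The flatness/integrability condition \eqref{integr} for all $\lambda\in S^1$ applied to a \emph{constant} $\alpha_\lambda$ reduces exactly to the Maurer-Cartan bracket vanishing, and separating this by powers of $\lambda$ yields $[\eta\wedge\bar\eta]=0$ together with the internal relations. The non-degeneracy condition $\eta_{013}+\eta_{023}\neq0$ encodes precisely that $y$ is a genuine immersion without branch points (i.e.\ $\kappa\neq0$), which I would read off from the explicit entries in \eqref{eq-MC-2}--\eqref{eq-b1}.

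For part (3), I would run the DPW reconstruction of Theorem \ref{DPW} in reverse: starting from the constant real potential $\eta=\lambda^{-1}\eta_{-1}+\eta_0$ of the prescribed form, the ODE $dF=F\eta$ has the explicit solution $F(z,\bar z,\lambda)=\exp\!\big(z\,\eta(\tfrac{\partial}{\partial z})+\bar z\,\overline{\eta(\tfrac{\partial}{\partial z})}\big)$ \emph{because} $[\eta\wedge\bar\eta]=0$ guarantees the two exponents commute, so $F$ is a genuine (global, singularity-free) frame rather than merely a holomorphic frame requiring Iwasawa splitting. Since $\eta$ has the form of $\alpha'$ in Section 2.1 with $\eta_{013}+\eta_{023}\neq0$, the strong conformal harmonicity condition $B_1^tI_{1,3}B_1=0$ of Definition \ref{stronglyconfharm} holds and Theorem \ref{th-Willmore-harmonic} produces a conformal Willmore immersion $y$ whose conformal Gauss map is $f=F\bmod K$; the condition $\eta_{013}+\eta_{023}\neq0$ rules out branch points. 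Homogeneity follows because left translation $F(z+c,\overline{z+c},\lambda)=F(c,\bar c,\lambda)\,F(z,\bar z,\lambda)$ (again using commutativity of the exponents) exhibits the abelian group $\{z\mapsto z+c\}$ as acting by the rigid motions $R_c=F(c,\bar c,1)\in SO^+(1,n+3)$.

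The main obstacle I anticipate is in part (2): rigorously upgrading ``$\kappa$ and $s$ are invariant under the abelian action'' to ``there exists a choice of frame whose full Maurer-Cartan form $\alpha$ is constant.'' The subtlety is that equivariance of $f$ only determines $\alpha$ up to a $z$-dependent gauge in $K$, so one must show this gauge freedom can be fixed to make $\alpha$ genuinely translation-invariant rather than merely conjugate to a translate of itself. This requires using the abelian (hence commutative) structure of $\Gamma$ to solve the resulting cocycle condition, and verifying that the normalization $F(z_0,\lambda)=\ee$ is compatible with the equivariance; the commutativity of $\Gamma$ is exactly what makes this gauge-fixing possible, which is why the abelian hypothesis is essential.
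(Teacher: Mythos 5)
Your parts (1) and (3) are, in substance, the paper's own proof. For (1) the argument is the same classification of transitive abelian actions. For (3) the paper does exactly what you propose: since $[\lambda^{-1}\mathfrak{B}+\mathfrak{A},\lambda\bar{\mathfrak{B}}+\bar{\mathfrak{A}}]=0$, the identity $e^{z(\lambda^{-1}\mathfrak{B}+\mathfrak{A})}=e^{z(\lambda^{-1}\mathfrak{B}+\mathfrak{A})+\bar z(\lambda\bar{\mathfrak{B}}+\bar{\mathfrak{A}})}\cdot e^{-\bar z(\lambda\bar{\mathfrak{B}}+\bar{\mathfrak{A}})}$ is an explicit Iwasawa decomposition, the middle factor is the extended frame, and homogeneity follows from the translation property of the exponential. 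Two quibbles there: the paper does not route through Theorem \ref{th-Willmore-harmonic} (which, as stated, would leave you the alternative ``$f$ contains a constant lightlike vector'' to exclude); instead, because the constant Maurer--Cartan form has the full shape \eqref{eq-MC-1}--\eqref{eq-b1}, the frame $F=(e_0,\hat e_0,e_1,e_2,\psi_1,\dots,\psi_n)$ is by construction the conformal Gauss frame of $y=[e_0-\hat e_0]$, and Willmore follows from harmonicity. Also your gloss that $\eta_{013}+\eta_{023}\neq0$ means ``$\kappa\neq0$'' is wrong: by \eqref{eq-MC-3} one has $s_1+s_3\equiv\tfrac{1}{\sqrt{2}}$ for the conformal Gauss frame of any immersion, so this quantity is independent of the Hopf differential; it encodes the nonvanishing of the tangential part of $dY$, i.e.\ absence of branch points, not absence of umbilics.

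The genuine gap is in part (2), exactly where you flagged it, and your proposal does not close it: you reduce constancy of $\alpha$ to a gauge-fixing/cocycle problem (``equivariance only determines $\alpha$ up to a $z$-dependent gauge in $K$'') and then only assert that commutativity of $\Gamma$ should make it solvable. The paper's proof supplies the idea that dissolves this problem rather than solving it: do not start from an arbitrary extended frame and try to gauge it, but use transitivity to \emph{define} the frame by the symmetry group itself. The lifted action of the translation group on the associated family is a homomorphism from the simply connected abelian group $\mathbb{R}^2$ into $\Lambda G_\sigma$, hence of the form $(u,v)\mapsto \exp(u\mathfrak{X})\exp(vi\mathfrak{Y})$ with commuting, $\lambda$-dependent generators; setting $F(u+iv,\lambda)=\exp(u\mathfrak{X})\exp(vi\mathfrak{Y})F(0,0,\lambda)$ and normalizing $F(0,0,\lambda)=I$ produces an extended frame whose Maurer--Cartan form $\mathfrak{X}\,\dd u+\mathfrak{Y}\,\dd v$ is constant by construction, with no cocycle to solve. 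The extended-frame $\lambda$-structure then forces $\mathfrak{X}+\mathfrak{Y}$ (the $\dd z$-coefficient) to carry only non-positive powers of $\lambda$ and $\mathfrak{X}-\mathfrak{Y}$ only non-negative ones, and the factorization $F=\exp\big(z(\mathfrak{X}+\mathfrak{Y})\big)\exp\big(\bar z\, i(\mathfrak{X}-\mathfrak{Y})\big)$ exhibits $\eta=(\mathfrak{X}+\mathfrak{Y})\dd z=(\lambda^{-1}\eta_{-1}+\eta_0)\dd z$ as a holomorphic potential generating $\tilde y$; your derivation of $[\eta\wedge\bar\eta]=0$ from flatness of the constant form is then correct. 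You should replace your gauge-fixing plan by this construction; as written, the central claim of (2) remains unproven in your proposal.
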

\begin{proof} (1). Since $M$ is a connected Riemann surface with  $Aut(M)$ containing
a two-dimensional abelian Lie subgroup, the universal covering of $M$ is $\C$ and the rest follows straightforwardly.

 (2). Choose the coordinate on $\C$,  we see that the group $\Gamma$ consists of all translations. Hence we obtain
for the extended frame of the conformal Gauss map the relation \[F(u+ iv, u-iv,\lambda) =
exp(u\mathfrak{X}) exp(vi\mathfrak{Y}) F(0,0,\lambda),\] where $\mathfrak{X}$ and $\mathfrak{Y}$ commute and only depend on $\lambda$.
Therefore the Maurer-Cartan form of $F$ is constant and
we obtain \[F^{-1} \dd F = \mathfrak{X}\dd u + \mathfrak{Y}\dd v = (\mathfrak{X}+\mathfrak{Y})\dd z + i(\mathfrak{X}-\mathfrak{Y}) \dd\bar{z}.\]
So $\mathfrak{X}-\mathfrak{Y}$ only involves non-negative powers of $\lambda$ and $\mathfrak{X}+\mathfrak{Y}$ only non-positive powers of lambda. Moreover, the
matrices $\mathfrak{X}-\mathfrak{Y}$ and $\mathfrak{X}+\mathfrak{Y}$ commute.
As a consequence, assuming w.l.g. $F(0,0,\lambda) = I_{n+4}$, we also obtain
\[F(u+ iv, u-iv,\lambda) =
exp(u\mathfrak{X}) exp(v \mathfrak{Y})  = \exp (z (\mathfrak{X}+\mathfrak{Y})) \exp( \bar{z} i(\mathfrak{X}-\mathfrak{Y})).\]
Since this is some Birkhoff  decomposition of $\exp (z (\mathfrak{X}+\mathfrak{Y}))$
we conclude that \[ \eta=\exp (z (\mathfrak{X}+\mathfrak{Y}))^{-1}\dd\exp (z (\mathfrak{X}+\mathfrak{Y}))=(\mathfrak{X}+\mathfrak{Y})\dd z =(\lambda^{-1}\eta_{-1}+\eta_0)\dd z\] is a holomorphic potential for the given immersion of the type stated.

(3). Assume now $\eta$ is of the special form stated, then
$\eta=(\lambda^{-1}\mathfrak{B}+\mathfrak{A})\dd z$ with $\mathfrak{A},$ $\mathfrak{B}$ constant matrices satisfying
\[[\lambda^{-1}\mathfrak{B}+\mathfrak{A},\lambda\bar{\mathfrak{B}}+\bar{\mathfrak{A}}]=0.\]
Then
\[e^{z(\lambda^{-1}\mathfrak{B}+\mathfrak{A})}=e^{z(\lambda^{-1}\mathfrak{B}+\mathfrak{A})+\bar{z}(\lambda\bar{\mathfrak{B}}+\bar{\mathfrak{A}})}\cdot e^{-\bar{z}(\lambda\bar{\mathfrak{B}}+\bar{\mathfrak{A}})}\]
is an Iwasawa decomposition, producing the extended frame
\[F(z,\bar{z},\lambda)=e^{z(\lambda^{-1}\mathfrak{B}+\mathfrak{A})+\bar{z}(\lambda\bar{\mathfrak{B}}+\bar{\mathfrak{A}})}.\]
This implies that the conformally harmonic map $f = F \mod K$ is conformally homogeneous.
Since the Maurer-Cartan form of $F(z,\bar{z},\lambda)=(e_0,\hat e_{0}, e_1,e_2,\psi_1,\cdots,\psi_n)$ is of the form stated in Section 2.1, $F$ is the conformal Gauss map of some immersion $y=[e_0-\hat{e}_0]$. The harmonicity of the conformal Gauss map indicates that $y$ is a Willmore immersion.
\end{proof}

\begin{corollary} Every homogeneous Willmore torus in $S^{n+2}$ can be obtained from a constant potential of the form
\begin{equation}\label{eq-homo-int}
  \eta = \lambda^{-1} \eta_{-1} + \eta_0,~~\hbox{ with }~[\eta_{-1},\overline{\eta_0}]=0,\  [\eta_{-1},\overline{\eta_{-1}}]+ [\eta_{0},\overline{\eta_0}]=0,
\end{equation}
and $ \eta_{-1}$, $ \eta_{0}$ being of the same form  as $ \alpha_{\mathfrak{p}}'(\frac{\partial}{\partial z})$ and $\alpha_{\mathfrak{k}}'(\frac{\partial}{\partial z})$ respectively in \eqref{eq-MC-1}, \eqref{eq-MC-2}, \eqref{eq-MC-3} and \eqref{eq-b1} ( See also Theorem 2.2 of \cite{DoWa11}).
\end{corollary}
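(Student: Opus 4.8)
The plan is to specialize the preceding theorem (on homogeneous Willmore surfaces with abelian symmetry group) to the case $M=\mathbb{T}$. First I would check that a homogeneous Willmore torus automatically satisfies the hypothesis of that theorem, namely that its symmetry group may be taken abelian. Writing $\mathbb{T}=\C/\Lambda$, the identity component of $Aut(\mathbb{T})$ is exactly the translation group $\C/\Lambda\cong S^1\times S^1$, which is abelian and already acts transitively on $\mathbb{T}$, while every point-stabilizer is finite. Hence for a homogeneous torus the closure $\overline{\Gamma_M}$ contains this two-dimensional abelian transitive translation group, and replacing $\Gamma$ by the corresponding (abelian) subgroup preserves homogeneity. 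Thus the preceding theorem applies with $\Gamma$ abelian and $M=\mathbb{T}$.

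Next I would simply read off the potential from part $(2)$ of that theorem. The lift $\tilde y$ to the universal cover $\C$ admits an extended frame of its conformal Gauss map with constant Maurer--Cartan form, and the associated constant real holomorphic potential $\eta=(\lambda^{-1}\eta_{-1}+\eta_0)\,\dd z$ generates $\tilde y$, and hence $y$, with $\eta_{-1}$ and $\eta_0$ of the same form as $\alpha_{\mathfrak{p}}'(\tfrac{\partial}{\partial z})$ and $\alpha_{\mathfrak{k}}'(\tfrac{\partial}{\partial z})$ respectively, and it satisfies the flatness relation $[\eta\wedge\bar\eta]=0$. These already provide the shape requirements of \eqref{eq-homo-int}; it only remains to rewrite the single matrix identity $[\eta\wedge\bar\eta]=0$ as the two commutator conditions.

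Then I would expand the flatness relation in powers of $\lambda$. Since $\eta$ is a $(1,0)$-form while $\bar\eta$ is a $(0,1)$-form and $\overline{\lambda^{-1}}=\lambda$ on $S^1$, one has $\bar\eta=(\lambda\,\overline{\eta_{-1}}+\overline{\eta_0})\,\dd\bar z$, so that
\[
[\eta\wedge\bar\eta]=\big([\eta_{-1},\overline{\eta_{-1}}]+[\eta_0,\overline{\eta_0}]+\lambda^{-1}[\eta_{-1},\overline{\eta_0}]+\lambda[\eta_0,\overline{\eta_{-1}}]\big)\,\dd z\wedge\dd\bar z .
\]
Vanishing of this Laurent polynomial in $\lambda$ is equivalent to the separate vanishing of the coefficients of $\lambda^{-1}$, $\lambda^0$ and $\lambda^{1}$. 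The $\lambda^{-1}$ coefficient gives $[\eta_{-1},\overline{\eta_0}]=0$; the $\lambda^{1}$ coefficient $[\eta_0,\overline{\eta_{-1}}]$ equals minus the complex conjugate of the previous one and so yields no new condition; and the $\lambda^0$ coefficient gives $[\eta_{-1},\overline{\eta_{-1}}]+[\eta_0,\overline{\eta_0}]=0$. These are exactly the relations in \eqref{eq-homo-int}.

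The hard part will be the reduction step rather than the computation: one must argue carefully that homogeneity of the torus forces an abelian transitive symmetry group so that the preceding theorem truly applies, and note that the resulting constant potential genuinely descends to $\mathbb{T}$ (which is automatic here, since we start from a globally defined torus immersion, so the closing conditions are already met). The remaining $\lambda$-expansion is routine bookkeeping, the only delicate point being the reality convention $\overline{\lambda^{-1}}=\lambda$, which is what makes the $\lambda^{\pm1}$ coefficients conjugate and hence collapses them into the single condition $[\eta_{-1},\overline{\eta_0}]=0$.
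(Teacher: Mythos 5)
Your overall route is exactly the intended one: the paper states this corollary without a separate proof, as an immediate consequence of the preceding theorem, and your $\lambda$-expansion of $[\eta\wedge\bar\eta]=0$ (using $\overline{\lambda^{-1}}=\lambda$ on $S^1$, so that the $\lambda^{+1}$ coefficient is minus the conjugate of the $\lambda^{-1}$ coefficient and hence redundant) is precisely how the two commutator conditions in \eqref{eq-homo-int} are meant to be read off from part (2) of that theorem. Your remark that the closing conditions are automatic, since one starts from a globally defined torus, is also correct.

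However, your reduction step has a genuine gap. The hypothesis of the theorem is that $\Gamma\subset Aut(M)\times SO^+(1,n+3)$ --- the group of \emph{pairs} $(\gamma,R_\gamma)$ --- is abelian, not merely that its projection $\Gamma_M$ is abelian. Your argument shows (correctly, modulo the standard fact that a closed transitive subgroup of $Aut(\mathbb{T})$ must contain all translations) that $\Gamma_M$ may be taken to be the full translation group of $\mathbb{T}=\C/\Lambda$; but you then assert without justification that ``the corresponding (abelian) subgroup'' of $\overline{\Gamma}$ is abelian. This is not automatic: two pairs $(\gamma_1,R_1)$, $(\gamma_2,R_2)$ with commuting translations $\gamma_1,\gamma_2$ only satisfy $R_1R_2\, y(z)=R_2R_1\, y(z)$ on the image of $y$, and if $y$ is not full, the preimage in $\overline{\Gamma}$ of the translation group can even contain a nonabelian subgroup of elements $(\mathrm{id},R)$, with $R$ ranging over M\"obius transformations fixing the image subsphere pointwise. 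The missing step is: assume w.l.g. that $y$ is full (otherwise replace $S^{n+2}$ by the subsphere spanned by the image); then $R_\gamma$ is uniquely determined by $\gamma$, the map $\gamma\mapsto R_\gamma$ is a homomorphism from the abelian translation group into $SO^+(1,n+3)$, and its graph $\{(\gamma,R_\gamma)\}$ is an abelian transitive symmetry group, so the theorem applies. With this insertion your proof is complete and agrees with the paper's (implicit) argument.
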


A special case of homogeneous strongly conformally harmonic maps is produced by ``{\bf vacuum potentials}". Recall the definition of a vacuum potential \cite{BP}
$$\eta=(\lambda^{-1}\mathfrak{B})\dd z,\ \hbox{ with } [\mathfrak{B},\bar{\mathfrak{B}}]=0.$$
Such a potential always produces a harmonic map $f$. For $f$ being a strongly conformally harmonic map, one needs to assume that
$$\mathfrak{B}=\left(
                   \begin{array}{cc}
                    0 & B_1 \\
                    - B_1^tI_{1,3} & 0 \\
                   \end{array}
                 \right),\ \hbox{ with }\ B_1^tI_{1,3}B_1=0.$$
Then, as shown in Lemma 3.3 of \cite{DoWa11}, there exists some $L_1\in SO^+(1,3)$ such that $L_1B_1$ is of the form \eqref{eq-b1}.
By Theorem \ref{th-potential-light}, one sees that for $f$ being the conformal Gauss map of some Willmore map $y$, the maximal rank of $B_1$ must be one.
Hence we may assume that
$$B_1=(\mathrm{v}_1,\cdots,\mathrm{v}_n) \ ~\hbox{ with }\ ~\mathrm{v}_j=(a_j+ib_j) \mathrm{v}_0,\ a_j, b_j\in\R,\ j=1,\cdots,n. $$
Here $\mathrm{v}_0\in \hbox{Span}_{\C}\{(1,-1,0,0)^t,(0,0,1,i)^t\}.$
If $\langle\mathrm{v}_0,\mathrm{v}_0\rangle=0$, then $\mathrm{v}_0\in \hbox{Span}_{\C}\{(1,-1,0,0)^t\}$.
So we see that $f$ reduces to a harmonic map into $SO(1,n)/SO(1,1)\times SO(n)$. If  $\langle\mathrm{v}_0,\mathrm{v}_0\rangle\neq0$, there exists another
$L_2\in SO^+(1,3)$ such that $L_2\mathrm{v}_0\in \hbox{Span}_{\C}\{(0,0,1,i)^t\}$.
As a consequence,  $f$ reduces to a harmonic map into $SO(n+2)/SO(2)\times SO(n)$.

In a sum, we obtain
\begin{proposition}Let $f$ be a vacuum solution which is also a strongly conformally harmonic map. Then it can not be the conformal Gauss map of any Willmore surface.
\end{proposition}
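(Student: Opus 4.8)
The plan is to prove the non-existence statement by an exhaustive analysis of $\mathrm{rank}\,B_1$, showing that the commuting vacuum relation is incompatible with the light-like data \eqref{eq-w-minimal} of a genuine conformal Gauss map. First I would record the standing hypotheses: a vacuum potential is $\eta=(\lambda^{-1}\mathfrak{B})\,\dd z$ with $\mathfrak{B}$ constant and $[\mathfrak{B},\bar{\mathfrak{B}}]=0$, so it always integrates to a harmonic map $f$ on $\C$ (\cite{BP}), while strong conformal harmonicity forces $\mathfrak{B}$ to be off-diagonal with $B_1^tI_{1,3}B_1=0$ as in \eqref{eq-Willmore harmonic}. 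Applying Lemma 3.3 of \cite{DoWa11} I would pick a constant $L_1\in SO^+(1,3)$ putting $B_1$ into the normal form \eqref{eq-b1}; since $L_1$ is real and lies in $K$, conjugation by it preserves both the block shape and the commuting relation, because $[L_1\mathfrak{B}L_1^{-1},\overline{L_1\mathfrak{B}L_1^{-1}}]=L_1[\mathfrak{B},\bar{\mathfrak{B}}]L_1^{-1}=0$. Thus $\eta$ may be assumed of the light-like type \eqref{eq-w-minimal}, and by Theorem \ref{th-potential-light} the map $f$ sits in the branch of Theorem \ref{th-Willmore-harmonic} that contains a constant light-like vector.

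The next step is the rank split for the $4\times n$ matrix $B_1$, whose columns after \eqref{eq-b1} all lie in $\mathrm{Span}_{\C}\{(1,-1,0,0)^t,(0,0,1,i)^t\}$, so $\mathrm{rank}\,B_1\le 2$. If the rank is $2$, the first Corollary following Theorem \ref{th-potential-light} already excludes $f$ from being a conformal Gauss map; if the rank is $0$, then $f$ is constant and there is nothing to prove. The decisive case is $\mathrm{rank}\,B_1=1$, where $B_1=v_0\,c^t$ for a single $v_0$ in the above span and some $c\in\C^n$. Note already that $B_1^tI_{1,3}B_1=(v_0^tI_{1,3}v_0)\,cc^t=0$ forces $v_0$ to be null for the bilinear form.

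Here I would feed the vacuum bracket into this ansatz. The $(1,1)$-block of $[\mathfrak{B},\bar{\mathfrak{B}}]$ is proportional to $\bigl(\sum_j|c_j|^2\bigr)(v_0\bar v_0^{\,t}-\bar v_0 v_0^{\,t})I_{1,3}$, so its vanishing forces $v_0\bar v_0^{\,t}=\bar v_0 v_0^{\,t}$, i.e. every entry product $(v_0)_a\overline{(v_0)_b}$ is real; equivalently $v_0$ is a single phase times a real vector. Combined with $v_0$ being null and lying in the two-plane $\mathrm{Span}_{\C}\{(1,-1,0,0)^t,(0,0,1,i)^t\}$, this pins $v_0$ to a distinguished null direction, so that $\hat B_1$ takes one of the degenerate shapes \eqref{eq-w-min-2}: either $v_0\parallel(1,-1,0,0)^t$, whence $f$ reduces to a harmonic map into $SO^+(1,n)/SO^+(1,1)\times SO(n)$, or (after a further constant $L_2\in SO^+(1,3)$, again preserving the bracket) $v_0\parallel(0,0,1,i)^t$, whence $f$ reduces into $SO(n+2)/SO(2)\times SO(n)$. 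By Theorem 3.11 of \cite{DoWa11}, equivalently the second Corollary following Theorem \ref{th-potential-light}, a harmonic map valued in either of these proper subspaces is never the conformal Gauss map of a conformal immersion. Assembling the three rank cases proves the proposition.

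The hard part will be the rank-one bookkeeping: extracting from the single relation $[\mathfrak{B},\bar{\mathfrak{B}}]=0$ the twin facts that $v_0$ is real up to a phase and that it is null, and then matching the resulting $v_0$ to the degenerate shapes \eqref{eq-w-min-2} and thence to the proper subspaces. Throughout, the point needing care is that each normalizing transformation ($L_1$ and $L_2$) be a constant real element of $K$, so that it simultaneously preserves the strongly-conformal block form, the commuting vacuum relation, and the isomorphism class of the harmonic map; without this compatibility the reduction to a proper subspace would not be legitimate.
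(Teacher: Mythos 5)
Your proof is correct, and its skeleton coincides with the paper's: normalize $B_1$ by Lemma 3.3 of \cite{DoWa11}, discard the rank-two case by the first corollary following Theorem \ref{th-potential-light}, and in the rank-one case exhibit a reduction to one of the degenerate shapes \eqref{eq-w-min-2}, hence to $SO^+(1,n+1)/SO^+(1,1)\times SO(n)$ or $SO(n+2)/SO(2)\times SO(n)$, which by Theorem 3.11 of \cite{DoWa11} can never be conformal Gauss maps. The one genuinely different step is the rank-one analysis. The paper never reuses the vacuum relation $[\mathfrak{B},\bar{\mathfrak{B}}]=0$ after the setup: it splits according to whether $v_0$ is Hermitian-null (the paper's $\langle v_0,v_0\rangle$ must be read as the Hermitian pairing $\langle v_0,\bar v_0\rangle$, since the bilinear square of every vector in the isotropic span vanishes identically), i.e.\ it uses the classification of bilinear-null lines in $\C^4_1$ into complex multiples of real null vectors versus spans of spacelike $2$-planes, and both cases genuinely occur and reduce. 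You instead feed the vacuum bracket into the rank-one ansatz; your $(1,1)$-block computation $|c|^2(\bar v_0 v_0^t - v_0\bar v_0^t)I_{1,3}=0$ is correct and yields that $v_0$ is a single phase times a real vector.

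What you did not notice is that this conclusion is strictly stronger than what you then use, and it makes your own case split internally inconsistent: a phase times a real vector can never equal, nor be carried by a real $L_2\in SO^+(1,3)$ to, a multiple of $(0,0,1,i)^t$, because conjugation by real matrices preserves the ``phase times real'' property; concretely, for $v_0=(a,-a,b,ib)^t$ the entry product $b\,\overline{ib}=-i|b|^2$ is real only if $b=0$. So under the vacuum hypothesis your second alternative is empty: rank-one vacua always have $v_0\parallel(1,-1,0,0)^t$ and reduce to the Lorentzian subspace, never to $SO(n+2)/SO(2)\times SO(n)$. This does not invalidate the proof --- verifying the conclusion on both branches of a disjunction, one of which happens to be vacuous, is still a proof --- but stated sharply your argument needs no case analysis at all, which is precisely what the vacuum hypothesis buys over the paper's more general Hermitian-type split. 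A final nit, shared with the paper: rank $0$ is not literally ``nothing to prove,'' since a constant map is the conformal Gauss map of any conformal parametrization of a round $2$-sphere; as in Theorem \ref{th-Willmore-harmonic}, constant maps must be excluded by convention.
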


\begin{example}Let $y=[Y]: S^1\times R^1\rightarrow S^4$ be the cylinder
  \begin{equation}
Y=\left( \cosh av, \sinh av, \cos u \cos bv, \cos u \sin bv, \sin u \cos bv, \sin u \sin bv \right)^t
\end{equation}
with $a^2+b^2=1,\ a,b\in \mathbb{R}$. Note, if $a=0$ we obtain the Clifford torus in $S^3\subset S^4$, and if $b=0$ we obtain the round sphere with the north pole removed. (For a detailed discussion on Willmore tori in $S^4$, we refer to \cite{Bo} and \cite{Le-Pe-Pin}.)

A direct computation shows that $y$ is a homogeneous Willmore immersion, with a holomorphic potential
 \begin{equation}
\tilde\eta=\left(
                              \begin{array}{cccccc}
                                0 & 0 & \frac{1}{4\sqrt{2}} & \frac{-i(1+2a^2)}{4\sqrt{2}} & \frac{iab\lambda^{-1}}{2\sqrt{2}} & 0 \\
                                0 & 0 & \frac{3}{4\sqrt{2}} & \frac{-i(1+2b^2)}{4\sqrt{2}} & \frac{-iab\lambda^{-1}}{2\sqrt{2}} & 0 \\
                                \frac{1}{4\sqrt{2}} & \frac{-3}{4\sqrt{2}} & 0 & 0 & 0  & \frac{ ib \lambda^{-1}}{2}\\
                                \frac{-i(1+2a^2)}{4\sqrt{2}} & \frac{i(1+2b^2)}{4\sqrt{2}} & 0 & 0 & 0 &  \frac{ b\lambda^{-1}}{2}\\
                                 \frac{iab\lambda^{-1}}{2\sqrt{2}} & \frac{iab\lambda^{-1}}{2\sqrt{2}} & 0 & 0 & 0 & -\frac{a}{2}\\
                                0  & 0 & \frac{-ib\lambda^{-1}}{2} & \frac{-b\lambda^{-1}}{2}  & \frac{a}{2} & 0\\
                              \end{array}
                            \right)\dd z.
\end{equation}
\end{example}
\begin{example}\cite{Wang-Homo}  Let $y:\C \rightarrow S^5(\sqrt{1+2b^2})$
  \begin{equation}
y= \left(\cos  u \cos \frac{v}{\sqrt{3}}, \cos u \sin \frac{v}{\sqrt{3}}, \sin  u \cos \frac{v}{\sqrt{3}}, \sin u \sin \frac{v}{\sqrt{3}}, \sqrt{2}b\cos \frac{v}{\sqrt{3}b}, \sqrt{2}b\sin \frac{v}{\sqrt{3}b}\right)^t
\end{equation}
with $b\in \mathbb{R}^+$. Obviously $y$ is a torus if and only if $b\in \mathbb{Q}^+$.
A direct computation shows that $y$ is a homogeneous Willmore immersion, with a holomorphic potential
 \begin{equation}
\tilde\eta=\left(
                              \begin{array}{ccccccc}
                                0 & 0 & s_1 & s_2 & 0& 0 & \lambda^{-1}\sqrt{2}\beta_3 \\
                                0 & 0 & s_3 & s_4 & 0& 0 & -\lambda^{-1}\sqrt{2}\beta_3 \\
                               s_1& -s_3 & 0 & 0 & -\lambda^{-1}k_1 & -\lambda^{-1}k_2 & 0\\
                               s_2& -s_4 & 0 & 0 & -\lambda^{-1}ik_1 & -\lambda^{-1}ik_2 & 0\\
                               0& 0 & \lambda^{-1}k_1 & \lambda^{-1}ik_1 &0 & 0 & -a_{13}\\
                               0& 0& \lambda^{-1}k_2 & \lambda^{-1}ik_2 & 0 & 0 & -a_{23}\\
                               \lambda^{-1}\sqrt{2}\beta_3 & \lambda^{-1}\sqrt{2}\beta_3 & 0 & 0 &a_{13}  & a_{23} & 0\\
                              \end{array}
                            \right)\dd z,
\end{equation}
with
\[k_1=\frac{\sqrt{4b^2+2}}{12b},k_2=\frac{-i\sqrt{3}}{6}, s=\frac{4b^2-1}{18 b^2}, \beta_3=\frac{-i\sqrt{2}(4b^2-1)}{72b^2}, a_{13}=\frac{-i\sqrt{2b^2+1}}{6b}, a_{23}=\frac{\sqrt{6}}{6},\]
and
\[s_1=\frac{\sqrt{2}(20b^2+1)}{144b^2},\ s_2=\frac{-i\sqrt{2}(12b^2-1)}{48b^2},\ s_3=\frac{\sqrt{2}(52b^2-1)}{144b^2}, s_2=\frac{-i\sqrt{2}(12b^2+1)}{48b^2}.\]
Note that in this case one obtains the Ejiri's torus when $b=1$ \cite{Ejiri1982}.

Moreover, if we assume that $b=\frac{j}{l}$ with $j,l\in \mathbb{Z}^+$ and $(j,l)=1$, we obtain a torus with period $2\pi (1+il\sqrt{3})$. So the corresponding Willmore functional is
\[W(\mathbb{T}^2)=4\int_{0}^{2\pi}\dd u\int_{0}^{2\pi l\sqrt{3}}\dd v\left(|k_1|^2+|k_2|^2\right)=\frac{16\pi^2\sqrt{3}}{9}\left(l+\frac{j^2}{8l}\right).\]
\end{example}
\ \\

\section{Appendix: Two Decomposition Theorems}

In this section we discuss the basic decomposition theorems for loop groups.
We will use the notation introduced in Section 3.
 Since the decomposition theorems usually are proven for loops in simply-connected groups we will assume in this section that $G$ is simply-connected and will therefore always use, to avoid confusion,  the notation $\tilde{G}$. We would like to point out that the case of the group $G = SL(2,\R)$ is included in our presentation, but needs, at places, some interpretation, since in this case $\tilde{G}$ is not a matrix group, while $\tilde{G}^\C = SL(2,\C)$ is a matrix group and ``contains''  $\tilde{G}$ as a (non-isomorphic) image of the natural homomorphism.


\subsection{Birkhoff Decomposition}

 Starting from $\tilde{G}$ and an inner involution $\sigma$,  there is a unique extension, denoted again by $\sigma$, to $\tilde{G}^\C$.
The corresponding fixed point subgroups will be denoted by $\tilde{K}^\C$. Note that the latter group is connected, by a result of Springer$-$Steinberg.

Next we will consider the twisted loop group
$\Lambda \tilde{G}^{\C}_{\sigma} $.
General loop group theory implies that, since we  consider inner involutions only, we have  $\Lambda \tilde{G}^{\C}_{\sigma}  \cong
\Lambda \tilde{G}^{\C}$.
On the other hand,  we know $\pi_0 (\Lambda H) = \pi_1 (H)$ for any connected Lie group $H$, whence we infer that
$\Lambda \tilde{G}^{\C}_{\sigma} $ is connected. And since $\tilde{K}^\C$ is connected, also the groups $\Lambda^{+} \tilde{G}^{\mathbb{C}}_{\sigma}$
and $\Lambda^{-} \tilde{G}^{\mathbb{C}}_{\sigma}$ are connected.

For the loop group method used in this paper two decomposition theorems are of crucial importance. The first is

\begin{theorem}\label{thm-birkhoff}(Birkhoff decomposition theorem) Let $\tilde{G}^{\mathbb{C}}$ denote a simply-connected complex Lie group
with connected real form $\tilde{G}$ and let $\sigma$ be an inner involution of $\tilde{G}$ and  $\tilde{G}^{\mathbb{C}}$. Then the following statements hold
\begin{enumerate}
\item  $\Lambda \tilde{G}^{\C}_{\sigma} =
\bigcup \Lambda^{-} \tilde{G}^{\mathbb{C}}_{\sigma} \cdot \tilde{\omega} \cdot \Lambda^{+} \tilde{G}^{\mathbb{C}}_{\sigma},$
where the $\tilde{\omega}$'s are representatives of the double cosets.\\

\item The multiplication
 \begin{equation}
\Lambda^-_* \tilde{G}^{\mathbb{C}}_{\sigma} \times\Lambda^+ \tilde{G}^{\mathbb{C}}_{\sigma}\rightarrow
\Lambda^-_* \tilde{G}^{\mathbb{C}}_{\sigma} \cdot\Lambda^+ \tilde{G}^{\mathbb{C}}_{\sigma}
\end{equation}
is a complex analytic diffeomorphism and the (left) ``big cell"
$ \Lambda^-_* \tilde{G}^{\mathbb{C}}_{\sigma}
\cdot\Lambda^+ \tilde{G}^{\mathbb{C}}_{\sigma}$ is open and dense in
$ \Lambda  \tilde{G}^{\mathbb{C}}_{\sigma}$.\\

\item More precisely, every $g$  in $ \Lambda  \tilde{G}^{\mathbb{C}}_{\sigma}$ can be written in the form
 \begin{equation}g=g_-\cdot \tilde{\omega}\cdot g_+\end{equation}
with $g_{\pm}\in \Lambda^{\pm}  \tilde{G}^{\mathbb{C}}_{\sigma}$,
 and $\tilde{\omega} : S^1 \rightarrow  \tilde{T} \subset Fix^\sigma (\tilde{G}^\C)$ a homomorphism , where $\tilde{T}$ is a maximal compact torus in $\tilde{G}^\C$ fixed pointwise by $\sigma$.\\
\end{enumerate}
\end{theorem}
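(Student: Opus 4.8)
The plan is to deduce the twisted Birkhoff decomposition from the classical (untwisted) one, using the isomorphism $\Lambda \tilde{G}^{\mathbb{C}}_{\sigma} \cong \Lambda \tilde{G}^{\mathbb{C}}$ recorded just before the theorem, which is available precisely because $\sigma$ is inner. Since an inner involution is conjugation by an element of a maximal torus, one may fix a maximal torus $\tilde{T}$ of $\tilde{G}^{\mathbb{C}}$ with $\tilde{T} \subset Fix^\sigma(\tilde{G}^{\mathbb{C}})$; this is the torus into which the middle terms will eventually be placed, and $\sigma$ fixes it pointwise. The untwisting isomorphism is realized by a reparametrization in $\lambda$ combined with conjugation by a $\tilde{T}$-valued loop, and the first thing I would do is pin it down explicitly and verify that it is a bianalytic isomorphism of Banach Lie groups in the Wiener topology fixed in Section~3. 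The essential point to check is that this isomorphism carries the triangular subgroups $\Lambda^{+}\tilde{G}^{\mathbb{C}}_{\sigma}$, $\Lambda^{-}\tilde{G}^{\mathbb{C}}_{\sigma}$ and $\Lambda^{-}_{*}\tilde{G}^{\mathbb{C}}_{\sigma}$ onto the corresponding untwisted triangular subgroups and preserves the normalization $\gamma(\infty) = \ee$ defining the starred subgroup.

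Next I would invoke the classical Birkhoff decomposition for the untwisted loop group $\Lambda \tilde{G}^{\mathbb{C}}$ of a simply-connected complex group (\cite{DPW}, \cite{Ba-Do}), which supplies all three assertions in the untwisted setting: the disjoint double-coset decomposition $\Lambda \tilde{G}^{\mathbb{C}} = \bigcup \Lambda^{-}\tilde{G}^{\mathbb{C}} \cdot \omega \cdot \Lambda^{+}\tilde{G}^{\mathbb{C}}$ with middle terms the algebraic homomorphisms $\lambda \mapsto \lambda^{\mu}$ indexed by the cocharacters $\mu \in \mathrm{Hom}(\mathbb{C}^{*}, \tilde{T})$; the fact that the big cell $\Lambda^{-}_{*}\tilde{G}^{\mathbb{C}} \cdot \Lambda^{+}\tilde{G}^{\mathbb{C}}$ is open and dense; and the fact that multiplication restricted to it is a complex-analytic diffeomorphism. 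The engine behind this is the action of $\Lambda \tilde{G}^{\mathbb{C}}$ on a Hilbert-space Grassmannian: the big cell is the preimage of the top stratum, on which an explicit triangular factorization exists, and the discrete index set is identified with $\sigma$-compatible lattices, i.e. with cocharacters of $\tilde{T}$. Simple-connectedness of $\tilde{G}^{\mathbb{C}}$ and connectedness of $\tilde{K}^{\mathbb{C}} = Fix^\sigma(\tilde{G}^{\mathbb{C}})$ (Springer--Steinberg, as noted above) guarantee that no disconnectedness corrections enter and that the triangular subgroups are themselves connected.

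Transporting these three facts back through the untwisting isomorphism then yields statements (1)--(3): double cosets pull back to double cosets, while openness, density and the biholomorphy of the big-cell multiplication are preserved because the isomorphism is bianalytic and matches the subgroups, and the middle terms $\lambda \mapsto \lambda^{\mu}$ are carried to honest group homomorphisms $\tilde{\omega}: S^{1} \to \tilde{T}$ into the $\sigma$-fixed torus, which is exactly the refinement demanded in part~(3). The main obstacle I anticipate lies entirely in the first step: the careful bookkeeping needed to confirm that the untwisting map simultaneously respects $\Lambda^{+}$, $\Lambda^{-}$, $\Lambda^{-}_{*}$ and the normalization at $\infty$, and that it sends cocharacter middle terms to homomorphisms valued in $\tilde{T}$. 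The analytic core of the theorem is classical once one is in the untwisted picture; what genuinely requires attention is solely the compatibility of the twisting with the triangular decompositions and the identification of the middle terms with homomorphisms into the fixed maximal torus.
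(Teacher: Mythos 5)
Your reduction hinges on the claim that the untwisting isomorphism $\Lambda \tilde{G}^{\C}_{\sigma} \cong \Lambda \tilde{G}^{\C}$ ``carries the triangular subgroups $\Lambda^{\pm}\tilde{G}^{\C}_{\sigma}$ onto the corresponding untwisted triangular subgroups,'' and this is precisely the step that fails. For an inner involution $\sigma = \mathrm{Ad}(Q)$ with $Q=\exp(\pi i d)$, $d$ in a maximal torus, the untwisting is conjugation by the multivalued loop $\lambda^{d/2}$ followed by the substitution $\mu=\lambda^{2}$; on a root space it acts by $\mathfrak{g}_{\alpha}\lambda^{n}\mapsto \mathfrak{g}_{\alpha}\mu^{(n+\alpha(d))/2}$, so it shifts the $\lambda$-grading by $\alpha(d)$ and cannot respect ``holomorphic at $\lambda=0$'' and ``holomorphic at $\mu=0$'' simultaneously in both directions. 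Concretely, for $\tilde{G}^{\C}=SL(2,\C)$ and $\sigma=\mathrm{Ad}(\mathrm{diag}(i,-i))$, twisted loops have even diagonal and odd off-diagonal entries, the untwisting is $\gamma\mapsto \mathrm{diag}(\lambda^{-1/2},\lambda^{1/2})\,\gamma\,\mathrm{diag}(\lambda^{1/2},\lambda^{-1/2})$, and the image of $\Lambda^{+}SL(2,\C)_{\sigma}$ is the proper Iwahori-type subgroup $\{\tilde{\gamma}\in\Lambda^{+}SL(2,\C)\ :\ \tilde{\gamma}(0)\ \text{upper triangular}\}$; the preimage of a generic element of $\Lambda^{+}SL(2,\C)$ has a pole at $\lambda=0$ in its $(2,1)$ entry. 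Worse, for the involution relevant to this paper, $\sigma=\mathrm{Ad}(\mathrm{diag}(-I_{4},I_{n}))$ on $SO(1,n+3,\C)$, the complexified $\mathfrak{k}$ contains roots with $\alpha(d)=\pm 2$ (e.g. $\pm(e_{1}+e_{2})$ in $\mathfrak{so}(4,\C)$), so the untwisting does not even map $\Lambda^{+}\tilde{G}^{\C}_{\sigma}$ \emph{into} $\Lambda^{+}\tilde{G}^{\C}$: constant loops in $\mathfrak{g}_{-(e_{1}+e_{2})}$ acquire a pole at $\mu=0$. Consequently, transporting the classical untwisted Birkhoff decomposition back through the isomorphism produces a double-coset decomposition relative to parahoric-type subgroups (loops holomorphic at $0$, resp.\ $\infty$, whose value there lies in a prescribed parabolic), not the decomposition relative to $\Lambda^{\pm}\tilde{G}^{\C}_{\sigma}$ asserted in the theorem; in particular the untwisted big cell does not pull back to $\Lambda^{-}_{*}\tilde{G}^{\C}_{\sigma}\cdot\Lambda^{+}\tilde{G}^{\C}_{\sigma}$.

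The gap is repairable, but only with a strictly stronger input than the classical Birkhoff decomposition you invoke: one needs the affine Bruhat/Birkhoff decomposition of the untwisted loop group relative to parahoric subgroups, together with a matching identification of the middle terms. This is in effect what the paper does while bypassing the untwisting altogether: Kac and Peterson \cite{Ka-P} prove the relevant decompositions for algebraic Kac--Moody groups, in a framework where the twisted subgroups $\Lambda^{\pm}\tilde{G}^{\C}_{\sigma}$ are themselves standard parabolic-type subgroups (an inner twist changes the triangular decomposition, not the isomorphism type of the affine algebra), and the passage from the algebraic loop group to the Wiener completion is then a density/completeness argument as in \cite{DPW}. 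So your overall strategy --- reduce to a known decomposition and transport it --- is the right spirit, but the specific transport you propose loses exactly the data the theorem is about: where the loops extend holomorphically and what their leading terms are.
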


\begin{proof} The decomposition above has been proven for algebraic loop groups in \cite{Ka-P}.
Our results follow by completeness in the Wiener Topology (see e.g. \cite{DPW}).
\end{proof}

\begin{remark}
\begin{enumerate}
\item Our actual goal is to obtain a Birkhoff decomposition theorem for $(\Lambda G_\sigma^\C)^0$. The restriction to the connected component
is possible, since we will always consider maps from connected surfaces into the loop group which attain the value $I$ at some point of the surface.

This is very fortunate: since we have obtained above  a Birkhoff decomposition for the simply connected  complexified universal group, we will attempt to obtain the desired Birkhoff decomposition by projection.
Since $\Lambda \tilde{G}^{\C}_{\sigma} $ is connected,
applying the natural extension of the natural projection  $\tilde{ \pi}^\C : \tilde{G}^{\mathbb{C}} \rightarrow {G}^{\mathbb{C}}$  to $\Lambda \tilde{G}^{\mathbb{C}}_{\sigma} $ we obtain as image  the connected component  $(\Lambda G_\sigma^\C)^0$ of $\Lambda G_{\sigma}^\C$ .
We thus obtain the desired Birkhoff decomposition by projecting the terms on the right side. But since the groups  $\Lambda^{+} \tilde{G}^{\mathbb{C}}_{\sigma}$
and $\Lambda^{-} \tilde{G}^{\mathbb{C}}_{\sigma}$ are connected their images under the projection are $\Lambda^{+}_\FC {G}^{\mathbb{C}}_{\sigma}$
and $\Lambda^{-}_\FC {G}^{\mathbb{C}}_{\sigma}$ respectively.
 From this the Birkhoff Decomposition Theorem for $(\Lambda G_\sigma^\C)^0$ follows. The special case of primary interest in this paper will be discussed in detail in the following remark.

\item  Let $J$ denote a nondegenerate quadratic form in $\R^{n+4}$
and  $SO(J,\C)$
the corresponding real special orthogonal group. Let $SO(J,\C)$ denote the
complexified special orthogonal group. Then $SO(J,\C)$ is connected and
has fundamental group $\pi_{1}(SO(J,\C))\cong \mathbb{Z}/2\mathbb{Z}$.
Moreover, if $\sigma$ is an involutive inner automorphism, we have
$\Lambda SO(J,\C)\cong \Lambda SO(J,\C)_{\sigma}$. Therefore
 \begin{equation}
\pi_0 ( \Lambda SO(J,\C)_{\sigma}) \cong\pi_0 ( \Lambda SO(J,\C))\cong \pi_{1}(SO(J,\C))\cong \mathbb{Z}/2\mathbb{Z}.
\end{equation}
The loop group $\Lambda SO^+(1,\tilde{n},\mathbb{C})_{\sigma}$ thus has two connected components.
Finally, choosing $\sigma, K$ and $K^\C$ in the Willmore setting, the group $K^\C$ has two connected components. Therefore also
$\Lambda^+ {G}^{\mathbb{C}}_{\sigma}$ and
$\Lambda^{-} {G}^{\mathbb{C}}_{\sigma}$ have two connected components.

\item Much of the above is contained in \cite{PS}, Section 8.5 (see also \cite{SW}). Note, however, that our real group $G=SO^+(1,n+3)$ is not compact.
\end{enumerate}
\end{remark}

\ \\
{\em Proof of Theorem \ref{thm-birkhoff-0}.}\

  Consider the universal cover $\pi: Spin(1,n+3, \mathbb{C})\rightarrow SO(1,n+3,\mathbb{C})$.
Then $\pi$ induces a homomorphism from  $\Lambda Spin(1,n+3, \mathbb{C})_\sigma$ to
$(\Lambda SO(1,n+3, \mathbb{C})_\sigma)^0$, the identity component of  $\Lambda SO(1,n+3, \mathbb{C})_\sigma$.
 Projecting the decomposition of  Theorem \ref{thm-birkhoff} with $\tilde G^\C = Spin(1,n+3,\C)$ to $ SO(1,n+3, \mathbb{C})$, we obtain the Birkhoff factorization Theorem \ref{thm-birkhoff-0}.
\hfill $\Box$


\subsection{Iwasawa Decomposition}

From here on we will write, for convenience, $\Lambda G^0_\sigma$ for
$(\Lambda G_\sigma)^0$.
For our geometric applications we also need a second loop group decomposition. Ideally we would like to be able to write
any $g\in (\Lambda G^\mathbb{C})^0_\sigma$ in the form $g=hv_+$ with  $h\in (\Lambda G)^0_\sigma$ and
 $ v_+ \in (\Lambda^+ G^\mathbb{C})^0_\sigma =
\Lambda^+_\mathcal{C} G^\mathbb{C}_\sigma$.  Unfortunately, this is not always possible.

For the discussion of this situation we start again by considering  the universal cover $ \tilde{\pi}^\C      :\tilde{G}^\C \rightarrow G^\C$. Then $\tau$, the anti-holomorphic involution of $G^\C$ defining $G$, and $\sigma$ have natural
lifts, denoted  by $\tilde{\tau}$ and $\tilde{\sigma}$, to $ \tilde{G}^\C$.
The  fixed point group
$\tilde{K}^\C$ of $\tilde{\sigma}$ is connected and projects onto $(K^\C)^0$.
The fixed point group of $\tilde{\tau}$ in $ \tilde{G}^\C$ is generally not  connected, like in the Willmore surface  case, where  the real elements ${Fix}^{\tau}(\tilde{G}^\C) = Spin(1,n+3)$  of $\tilde{G}^\C = Spin(1,n+3, \C)$ form a non-connected group.
But it suffices to consider its connected component
$ ( {Fix}^{\tau}(\tilde{G}^\C) )^0 =  \tilde{G}$ which projects onto $G$ under $\tilde{\pi}: \tilde{G} \rightarrow G$.

From here on we will write, for convenience, $\Lambda G^0_\sigma$ for
$(\Lambda G_\sigma)^0$. Then we trivially obtain the disjoint union
\begin{equation}\label{general-Iwasawa-univ}
\Lambda \tilde{G}^{\C}_{\sigma} =
\bigcup \Lambda \tilde{G}_{\sigma}\cdot \tilde{\delta} \cdot
\Lambda^{+} \tilde{G}^{\mathbb{C}}_{\sigma},
\end{equation}
where the $\tilde{\delta}$'s simply parametrize the different double cosets.
 Note that in this equation all groups are connected.
We can (and will) assume that $\tilde{\delta} = e$ occurs.
For the corresponding double coset, since the corresponding Lie algebras add to give the full loop algebra, we obtain:

\begin{theorem}
The multiplication $\Lambda \tilde{G}_{\sigma}\times \Lambda^{+} \tilde{G}^{\mathbb{C}}_{\sigma}\rightarrow
\Lambda \tilde{G}^{\mathbb{C}}_{\sigma}$ is a real analytic map onto the connected open subset
$ \Lambda \tilde{G}_{\sigma} \cdot \Lambda^{+} \tilde{G}^{\mathbb{C}}_{\sigma}      = \mathcal{I}^{\mathcal{U}}_e \subset\Lambda \tilde{G}^{\mathbb{C}}_{\sigma}$.

\end{theorem}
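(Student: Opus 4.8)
The plan is to verify four things in turn: that the multiplication map $\mu:\Lambda \tilde{G}_{\sigma}\times \Lambda^{+} \tilde{G}^{\mathbb{C}}_{\sigma}\to \Lambda \tilde{G}^{\mathbb{C}}_{\sigma}$, $\mu(h,v_+)=hv_+$, is real analytic; that its image is open; that this image is connected; and that it equals $\mathcal{I}^{\mathcal{U}}_e$. Real analyticity is immediate: $\mu$ is the composition of the two subgroup inclusions $\Lambda \tilde{G}_{\sigma}\hookrightarrow \Lambda \tilde{G}^{\mathbb{C}}_{\sigma}$ and $\Lambda^{+} \tilde{G}^{\mathbb{C}}_{\sigma}\hookrightarrow \Lambda \tilde{G}^{\mathbb{C}}_{\sigma}$ (real analytic as embeddings of Banach Lie groups in the Wiener topology) with the real analytic group multiplication of $\Lambda \tilde{G}^{\mathbb{C}}_{\sigma}$. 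Connectedness of the image is equally quick: $\Lambda \tilde{G}_{\sigma}$ is connected (since $\tilde G$ is simply connected, $\pi_0(\Lambda \tilde G_\sigma)\cong\pi_1(\tilde G)=0$) and $\Lambda^{+} \tilde{G}^{\mathbb{C}}_{\sigma}$ is connected (because $\tilde{K}^{\mathbb{C}}$ is connected), so their product is connected and $\mu$ is continuous. Finally, $\Lambda \tilde{G}_{\sigma} \cdot \Lambda^{+} \tilde{G}^{\mathbb{C}}_{\sigma} = \mathcal{I}^{\mathcal{U}}_e$ is simply the defining property of the cell $\tilde{\delta}=e$ in the disjoint union \eqref{general-Iwasawa-univ}.

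The substance is the openness of the image, which I would obtain by showing that $\mu$ is a submersion at every point. The Lie algebra ingredient, already announced before the statement, is the splitting
\[
\Lambda \tilde{\mathfrak{g}}^{\mathbb{C}}_{\sigma} = \Lambda \tilde{\mathfrak{g}}_{\sigma} + \Lambda^{+} \tilde{\mathfrak{g}}^{\mathbb{C}}_{\sigma}.
\]
I would prove this by a Fourier-mode argument: given $X=\sum_n X_n\lambda^n\in \Lambda \tilde{\mathfrak{g}}^{\mathbb{C}}_{\sigma}$, the reality involution $\tilde\tau$ defining $\tilde{G}$ identifies the negative modes of a loop in $\Lambda \tilde{\mathfrak{g}}_{\sigma}$ with the $\tilde\tau$-conjugates of its positive modes; hence one may choose $Y\in \Lambda \tilde{\mathfrak{g}}_{\sigma}$ whose negative part agrees with that of $X$ and split the zero mode using $\tilde{\mathfrak{g}}^{\mathbb{C}}=\tilde{\mathfrak{g}}\oplus i\tilde{\mathfrak{g}}$, after which $X-Y$ lies in $\Lambda^{+} \tilde{\mathfrak{g}}^{\mathbb{C}}_{\sigma}$. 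The intersection $\Lambda \tilde{\mathfrak{g}}_{\sigma}\cap \Lambda^{+} \tilde{\mathfrak{g}}^{\mathbb{C}}_{\sigma}$ consists of the constant loops in $\tilde{\mathfrak{k}}$, hence is finite dimensional and topologically complemented.

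With this in hand I would compute the differential in left-trivialized form. At a point $(h,v_+)$, writing tangent vectors as $h\xi$ and $v_+\eta$ with $\xi\in\Lambda \tilde{\mathfrak{g}}_{\sigma}$ and $\eta\in\Lambda^{+} \tilde{\mathfrak{g}}^{\mathbb{C}}_{\sigma}$, one finds $d\mu_{(h,v_+)}(h\xi,v_+\eta)=hv_+\bigl(\mathrm{Ad}(v_+^{-1})\xi+\eta\bigr)$, so the image of the differential is $hv_+$ times $\mathrm{Ad}(v_+^{-1})\Lambda \tilde{\mathfrak{g}}_{\sigma}+\Lambda^{+} \tilde{\mathfrak{g}}^{\mathbb{C}}_{\sigma}$. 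Since $\mathrm{Ad}(v_+)$ preserves $\Lambda^{+} \tilde{\mathfrak{g}}^{\mathbb{C}}_{\sigma}$, applying $\mathrm{Ad}(v_+)$ to a target vector and invoking the base splitting shows this sum is all of $\Lambda \tilde{\mathfrak{g}}^{\mathbb{C}}_{\sigma}$; thus $d\mu$ is surjective at every point, and its kernel, a copy of the finite-dimensional intersection $\tilde{\mathfrak{k}}$, is complemented. Hence $\mu$ is a submersion everywhere and, by the implicit function theorem for Banach manifolds, an open map onto its image. Combining the four observations gives the theorem.

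I would expect the only genuinely delicate point to be the functional-analytic bookkeeping of the splitting: one must check that both summands are closed in the Wiener topology and that the finite-dimensional intersection is complemented, so that the submersion theorem applies; the translation argument reducing surjectivity at $(h,v_+)$ to the base point is then routine. I would also emphasize that, in contrast to the decomposition for the real group $G$ in Theorem \ref{thm-Iwasawa-0}, no injectivity is asserted here—the nontrivial intersection $\tilde{\mathfrak{k}}$ is precisely the obstruction—so there is no need to prove that $\mu$ is a diffeomorphism.
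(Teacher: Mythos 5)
Your proof is correct and takes essentially the same route as the paper: the paper's entire justification is the trivially obtained disjoint union \eqref{general-Iwasawa-univ} into double cosets of connected groups, together with the one-line remark that for the coset of $\tilde{\delta}=e$ ``the corresponding Lie algebras add to give the full loop algebra,'' which is precisely your splitting $\Lambda \tilde{\mathfrak{g}}^{\mathbb{C}}_{\sigma} = \Lambda \tilde{\mathfrak{g}}_{\sigma} + \Lambda^{+} \tilde{\mathfrak{g}}^{\mathbb{C}}_{\sigma}$. Your Fourier-mode construction of the splitting and the submersion/implicit-function-theorem argument for openness are exactly the details the paper leaves implicit.
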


From this the Iwasawa Decomposition Theorem \ref{thm-Iwasawa-0} for $(\Lambda G^\C_\sigma)^0$ follows after an application of the natural projection as above.

\begin{remark}\
\begin{enumerate}
\item We have seen above that the Iwasawa cell with middle term $I$ is open.
But also the Iwasawa cell with middle term $\delta_0 = diag(-1,1,1,1,-1,...,1)$ is open. To verify this we consider $\delta_0^{-1} \Lambda so(1, n+3)_\sigma \delta_0
\oplus \Lambda^+so(1,n+3,\C)$ and observe that the first summand is equal to $\Lambda so(1, n+3)$. As a consequence,
\[\delta_0^{-1} \Lambda SO^+(1, n+3)^0_\sigma \delta_0
\cdot  \Lambda_\mathcal{C}^+SO(1,n+3,\C)_\sigma\] is open and the claim follows.

\item  Using work of Peter Kellersch \cite{Ke1} it seems to be possible to show that there are exactly two open Iwasawa cells in this case. We will not need such a statement in this paper.
\end{enumerate}
\end{remark}

\subsection{On a complementary  solvable  subgroup of  $ SO^+(1,3) \times SO(n)$ in $ SO(1,n+3,\C)$}

We consider $K^{\mathbb{C}}$, the connected complex subgroup of $SO(1,n+3,\C)$ with Lie algebra
 $\mathfrak{so}(1,3,\C)\times \mathfrak{so}(n,\C)$ considered as
Lie algebra of matrices acting on
$\mathbb{C}^4\oplus\mathbb{C}^{n}$. (Hence we consider the ``basic" representations
 of these Lie algebras as introduced in Section 2). Clearly
 $K^{\mathbb{C}}=SO(1,3,\mathbb{C})\times SO(n,\mathbb{C}).$

\begin{theorem} There exist connected solvable subgroups $S_1 \subset SO^+(1,3,\C)$ and $S_2 \subset SO(n,\C)$ such that
 \begin{equation}
\left(SO^+(1,3) \times SO(n)\right) \times (S_1 \times S_2) \rightarrow
\left(SO^+(1,3) \cdot S_1\right) \times \left( SO(n) \cdot S_2\right)
\end{equation}
is a real analytic diffeomorphism onto an open subset of $K^\C$.
\end{theorem}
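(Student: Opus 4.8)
The plan is to reduce the whole statement to a single purely Lie-theoretic lemma and then to treat the two direct factors $SO^+(1,3,\C)$ and $SO(n,\C)$ of $K^\C$ separately. Since $K = SO^+(1,3)\times SO(n)$ and the target $K^\C = SO^+(1,3,\C)\times SO(n,\C)$ are direct products, any choice of $S_1\subset SO^+(1,3,\C)$ and $S_2\subset SO(n,\C)$ turns the map in question into the product of the two multiplication maps $SO^+(1,3)\times S_1\to SO^+(1,3,\C)$ and $SO(n)\times S_2\to SO(n,\C)$. If each factor is a real-analytic diffeomorphism onto an open subset, so is their product, and its image $(SO^+(1,3)\cdot S_1)\times(SO(n)\cdot S_2)$ is then open in $K^\C$.

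The key lemma I would isolate is: if $L$ is a Lie group with closed connected subgroups $G,S$ whose Lie algebras satisfy $\mathfrak l = \mathfrak g\oplus\mathfrak s$ as a direct sum of real vector spaces, and $G\cap S = \{e\}$, then $\mu\colon G\times S\to L$, $(g,s)\mapsto gs$, is a real-analytic diffeomorphism onto an open subset of $L$. The proof is short: translating to the identity, the differential of $\mu$ at $(g,s)$ is $(X,Y)\mapsto \mathrm{Ad}(s^{-1})X+Y$ with $X\in\mathfrak g$, $Y\in\mathfrak s$. Since $S$ normalizes its own Lie algebra, $\mathrm{Ad}(s^{-1})\mathfrak g\cap\mathfrak s = \mathrm{Ad}(s^{-1})(\mathfrak g\cap\mathfrak s) = 0$, so by a dimension count the differential is an isomorphism \emph{at every point}; hence $\mu$ is a local diffeomorphism, in particular an open map. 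Injectivity follows at once from $g_1s_1 = g_2s_2\Rightarrow g_2^{-1}g_1 = s_2s_1^{-1}\in G\cap S=\{e\}$. An injective local diffeomorphism is a diffeomorphism onto its (open) image, and the inverse is real-analytic by the inverse function theorem.

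It then remains to produce the two solvable complements. For the compact factor this is classical: viewing $SO(n,\C)$ as a real semisimple Lie group with maximal compact subgroup $SO(n)$, its Iwasawa decomposition $\mathfrak{so}(n,\C) = \mathfrak{so}(n)\oplus(\mathfrak a_2\oplus\mathfrak n_2)$ furnishes a connected solvable $S_2 = A_2N_2$ with $SO(n)\cap S_2 = \{e\}$, so the lemma applies (here the image is in fact all of $SO(n,\C)$). For the noncompact factor I would use the isomorphism $\mathfrak{so}(1,3,\C)\cong\mathfrak{sl}(2,\C)\oplus\mathfrak{sl}(2,\C)$, under which $\mathfrak{so}(1,3)$ is the real form $\{(X,\bar X)\colon X\in\mathfrak{sl}(2,\C)\}$ (entrywise conjugation). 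Taking $\mathfrak s_1 := \mathfrak n^+\oplus\mathfrak b^-$, the sum of the strictly upper-triangular part in the first factor and the full lower-triangular Borel in the second, gives a solvable subalgebra of the correct real dimension; a direct check shows $\mathfrak s_1\cap\mathfrak{so}(1,3)=0$ (a strictly upper-triangular matrix whose conjugate is lower triangular must vanish), and the identical computation at the group level gives $S_1\cap SO^+(1,3)=\{e\}$, where $S_1$ is the connected solvable subgroup with Lie algebra $\mathfrak s_1$, pushed down to $SO^+(1,3,\C)$ along the covering $Spin(1,3,\C)\to SO^+(1,3,\C)$.

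I expect the main obstacle to be exactly this noncompact factor: unlike the compact case there is no off-the-shelf Iwasawa decomposition relative to $SO^+(1,3)$, so the solvable complement must be built by hand, and one must verify both the vector-space splitting and the genuinely global disjointness $S_1\cap SO^+(1,3)=\{e\}$ at the group level — the Lie-algebra condition $\mathfrak s_1\cap\mathfrak{so}(1,3)=0$ alone does not supply the injectivity hypothesis of the lemma. One also has to confirm that the subgroup built in the double cover descends to a connected solvable subgroup of $SO^+(1,3,\C)$ of the stated kind. Once these points are settled, the lemma assembles the two factors into the desired real-analytic diffeomorphism onto an open subset of $K^\C$; note that in the noncompact factor the image is a \emph{proper} open subset, which is precisely why the theorem asserts an open subset rather than all of $K^\C$.
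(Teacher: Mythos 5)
Your proposal is correct, and while it agrees with the paper on the compact factor (both invoke the classical Iwasawa decomposition $SO(n,\C)=SO(n)\cdot B$), it takes a genuinely different route on the noncompact factor, which is the crux. The paper writes down an explicit solvable subalgebra $\mathfrak{s}_1\subset\mathfrak{so}(1,3,\C)$ as a space of $4\times 4$ matrices (parametrized by $a_{12},a_{34}\in\R$, $a_{13},a_{23}\in\C$), checks solvability and the splitting $\mathfrak{so}(1,3,\C)=\mathfrak{so}(1,3)\oplus\mathfrak{s}_1$ by hand, gets a local diffeomorphism near the identity from Helgason plus a translation argument for openness, and proves the global disjointness $SO^+(1,3)\cap S_1=\{I\}$ by combining surjectivity of $\exp$ on $SO^+(1,3)$, a factorization of $S_1$ into an abelian diagonal-block part $\exp(\mathfrak{B})$ and a nilpotent off-diagonal part $\exp(\mathfrak{C})$, and the reality condition. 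You instead pass to $\mathfrak{so}(1,3,\C)\cong\mathfrak{sl}(2,\C)\oplus\mathfrak{sl}(2,\C)$ with real form $\{(X,\bar X)\}$ and take $\mathfrak{s}_1=\mathfrak{n}^+\oplus\mathfrak{b}^-$, so that both the vector-space splitting and the disjointness (at the algebra and group level) reduce to the observation that a (strictly) upper-triangular object whose conjugate is lower triangular is trivial; and you package the soft part of the argument into a clean general lemma (everywhere-injective differential via $\mathrm{Ad}(s^{-1})\mathfrak{g}\cap\mathfrak{s}=0$, plus global injectivity from $G\cap S=\{e\}$), which is tidier than the paper's near-identity-plus-openness treatment. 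What each approach buys: yours is structural and transparent, and the lemma isolates exactly what must be verified; the paper's is self-contained in the matrix picture, avoids covering-group bookkeeping, and leaves behind an explicit $\mathfrak{s}_1$ that is useful for concrete Iwasawa computations. The one detail you flag but should actually carry out is the descent: the intersection computation must be done modulo the kernel $\{\pm(I,I)\}$ of $SL(2,\C)\times SL(2,\C)\rightarrow SO(1,3,\C)$, i.e.\ one compares $(u,b)\in N^+\times B^-$ with $\pm(A,\bar A)$; since $-I$ is not unipotent, both sign cases collapse to the identity, so the argument does go through, and $N^+\times B^-$ maps isomorphically onto a connected solvable subgroup downstairs.
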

\begin{proof}
Since $SO(n)$ is a connected maximal compact subgroup of $SO(n,\mathbb{C})$,
 in $SO(n,\mathbb{C})$ we have the classical Iwasawa decomposition
 $SO(n,\mathbb{C})=SO(n)\cdot B,$  where $B$ is a solvable subgroup of $SO(n,\mathbb{C})$ satisfying
 $SO(n)\cap B=\{I\}$.

It thus suffices to consider $SO(1,3,\mathbb{C})$ and to prove the existence of a (connected solvable) subgroup $S_1$ of $SO(1,3,\C)$ such that
 \begin{equation}\label{eq-sos}
     \mathcal{S}: SO^+(1,3)\times S_1 \rightarrow SO^+(1,3)\cdot S_1
 \end{equation}
 is a real analytic diffeomorphism and
$
  SO^+(1,3)\cdot S_1
 $
  is open in $SO(1,3,\mathbb{C})$.
 Note, since the map $\mathcal{S}$ is clearly analytic and surjective, it suffices, as we will see below,  to verify that it is also open and that  $SO^+(1,3)\cap S_1=\{I\}$ holds.

 At any rate, we  need to find
a solvable Lie subalgebra $\mathfrak{s}_1$ of
$\mathfrak{so}(1,3,\mathbb{C})$
satisfying
\begin{equation} \mathfrak{so}(1,3) + \mathfrak{s}_1=\mathfrak{so}(1,3,\mathbb{C}),\ \
 \mathfrak{so}(1,3)\cap\mathfrak{s}_1=\{0\}.
\end{equation}

Set
\[\mathfrak{s}_1=\left\{\left.\left(
    \begin{array}{cccc}
      0 & i a_{12} & a_{13} & ia_{13} \\
      ia_{12} & 0 & a_{23} & ia_{23} \\
      a_{13} & -a_{23}  & 0 & ia_{34} \\
      i a_{13} & -ia_{23} & -ia_{34} & 0 \\
    \end{array}
  \right)\right|\ a_{12},a_{34}\in\R, a_{13}, a_{23}\in\C\right\}.
\]
We see that $\mathfrak{so}(1,3) \cap\mathfrak{s}_1=\{0\}$ and $\mathfrak{so}(1,3)^{\C}=\mathfrak{so}(1,3)\oplus\mathfrak{s}_1$ hold.
 It is straightforward to see  that $\mathfrak{s}_1$ is a solvable Lie algebra.
Let $S_1$ be the connected Lie subgroup of $SO(1,3,\C)$ with Lie algebra $Lie(S_1)=\mathfrak{s}_1$. So we have that the map $ \mathcal{S}$ is  a local  diffeomorphism near the identity element  by  Chapter II, Lemma 2.4 of \cite{Helgason}. This also implies that the map $\mathcal{S}$ is open.

 Next we finally show that  $SO^+(1,3)\cap S_1=I$ holds.
 We recall  that the exponential map $\exp:\mathfrak{so}(1,3)\rightarrow SO^+(1,3)$ is surjective.  Then every element of $SO^+(1,3)\cdot S_1$ has the form
$\exp( \mathfrak{A}) \exp( \mathfrak{B})\exp(\mathfrak{C}),$ with $\mathfrak{A}\in\mathfrak{so}(1,3)$,
  $\mathfrak{B}$ contained in the abelian subalgebra of  the $2\times2-$ diagonal blocks in $\mathfrak{s}_1 $, and
 $\mathfrak{C}$  in the nilpotent subalgebra of $\mathfrak{s}_1 $  consisting of the  ``off-diagonal''  blocks
(Note that for every  off-diagonal block $Q$ in $\mathfrak{s}_1$ we have $Q^2 = 0$).   Let  $\exp( \mathfrak{A}) \exp( \mathfrak{B})\exp(\mathfrak{C})\in SO^+(1,3)\cap S_1$.  Then $ \exp( \mathfrak{B})\exp(\mathfrak{C})= \exp(\overline{ \mathfrak{B}})\exp(\overline{\mathfrak{C}})$ and
  \[\exp(\overline{ \mathfrak{B}})^{-1}\exp( \mathfrak{B})= \exp(\overline{\mathfrak{C}})\exp(\mathfrak{C})^{-1}\]
follows.
  As a consequence, $\exp(\overline{ \mathfrak{B}})^{-1}\exp( \mathfrak{B})= \exp(\overline{\mathfrak{C}})\exp(\mathfrak{C})^{-1}=I_4$, i.e.,
  $\exp( \mathfrak{B})=\exp(\overline{ \mathfrak{B}})$ and $\exp(\mathfrak{C})=\exp(\overline{\mathfrak{C}})$.  The definition of    $ \mathfrak{s}_1 $ now implies  $\exp(\mathfrak{B})=\exp(\mathfrak{C})=I_4$.

 To see that the inverse map is real analytic we take a small neighbourhood in  $SO^+(1,3) \cdot  S_1$ of the form $gVs$, where $V$ is a small neighbourhood of the identity $I$. Since locally near $I$ our map is a real analytic diffeomorphism, the claim follows.
\end{proof}

\begin{remark}
We point out that the set $SO^+(1,3)S_1$ is not all of
$SO(1,3,\C)$. For example
\[\left(
               \begin{array}{cccc}
                 \frac{\sqrt{2}}{2} & 0 & \frac{i\sqrt{2}}{2} & 0 \\
               0  & 1 & 0 & 0 \\
                 \frac{i\sqrt{2}}{2} & 0 &\frac{\sqrt{2}}{2} & 0 \\
                0 & 0 & 0 & 1 \\
               \end{array}
             \right)
\]
 is an element of $SO^+(1,3,\C)$ which is not contained in $ SO^+(1,3)S_1$.
\end{remark}

{\small{\bf Acknowledgements}\ \ The second named author is partly supported by the Project 11571255 of NSFC.
 The second named author is thankful to the ERASMUS MUNDUS TANDEM Project for the financial supports to visit the TU M\"{u}nchen.
}

{\footnotesize

\def\refname{Reference}

}
\vspace{2mm}

{\footnotesize \begin{multicols}{2}   
Josef F. Dorfmeister

Fakult\" at f\" ur Mathematik,

TU-M\" unchen, Boltzmann str. 3,

D-85747, Garching, Germany

{\em E-mail address}: dorfm@ma.tum.de\\

Peng Wang

College of Mathematics \& Informatics, FJKLMAA,

Fujian Normal University, Qishan Campus,

Fuzhou 350117, P. R. China

{\em E-mail address}: {pengwang@fjnu.edu.cn}

\end{multicols}}
\end{document}